\documentclass[a4paper,11pt,leqno,english]{smfart}
\usepackage{aeguill}
\usepackage{enumerate}
\usepackage{amssymb,amsmath,latexsym,amsthm}
\usepackage[T1]{fontenc}
\usepackage{geometry}
\usepackage{url}
\usepackage[frenchb, main=english]{babel}
\usepackage[utf8]{inputenc}
\usepackage{mathrsfs}
\usepackage{xcolor}
\usepackage{comment}
\definecolor{violet}{rgb}{0.0,0.2,0.7}
\definecolor{rouge2}{rgb}{0.8,0.0,0.2}
\usepackage{tikz}
\usepackage{empheq}
\usepackage{tikz-cd}
\usetikzlibrary{matrix,arrows,decorations.pathmorphing}
\usepackage{hyperref}
\usepackage{mathpazo}
\hypersetup{
    bookmarks=true,         
    unicode=false,          
    pdftoolbar=true,        
    pdfmenubar=true,        
    pdffitwindow=false,     
    pdfstartview={FitH},    
    pdftitle={},    
    pdfauthor={},     
    colorlinks=true,       
   linkcolor=rouge2,          
    citecolor=violet,        
    filecolor=black,      
    urlcolor=cyan}           
\setcounter{tocdepth}{1}
\usepackage{enumitem}
\usepackage{appendix}

 \theoremstyle{plain}    
 \newtheorem{thm}{Theorem}[section]
\theoremstyle{plain} 
\newtheorem{bigthm}{Theorem}

 \numberwithin{equation}{section} 
 \numberwithin{figure}{section} 
 \newtheorem{cor}[thm]{Corollary} 
 \theoremstyle{plain}    
 \newtheorem{prop}[thm]{Proposition} 
 \theoremstyle{plain}    
 \newtheorem{lem}[thm]{Lemma} 
 \theoremstyle{remark}
  \newtheorem{claim}[thm]{Claim} 
 \theoremstyle{remark}
 \newtheorem{rem}[thm]{Remark}
 \theoremstyle{definition}

\theoremstyle{plain}  
\newtheorem{set}[thm]{Setup}
\theoremstyle{plain}
\newtheorem*{quest*}{Main Question}
\theoremstyle{plain}

\theoremstyle{plain}

\theoremstyle{definition}
\newtheorem{defi}[thm]{Definition}

\newtheorem*{ackn}{Acknowledgements}

\newcommand{\C}{{\mathbb{C}}}
\newcommand{\N}{{\mathbb{N}}}

\newcommand{\Q}{{\mathbb{Q}}}
\newcommand{\R}{{\mathbb{R}}}

\newcommand{\Z}{{\mathbb{Z}}}
\newcommand{\bD}{{\mathbb{D}}}

\newcommand{\cC}{{\mathcal{C}}}

\newcommand{\cO}{{\mathcal{O}}}

\newcommand{\cX}{{\mathcal{X}}}

\def\1{\mathbf{1}}

\newcommand{\wh}{\widehat}

\newcommand{\e}{\varepsilon}
\newcommand{\ep}{\varepsilon}

\newcommand{\om}{\omega}
\newcommand{\f}{\varphi}

\newcommand{\p}{\psi}

\newcommand{\omh}{\widehat \omega}

\newcommand{\omt}{\widetilde{\om}_{t}}

\newcommand{\vp}{\varphi}

\newcommand{\Ric}{\mathrm{Ric}}

\newcommand{\reg}{\mathrm{\rm reg}}

\renewcommand{\ge}{\geqslant}
\renewcommand{\le}{\leqslant}

\newcommand{\sing}{\operatorname{\rm sing}}

\newcommand{\cY}{{\mathcal{Y}}}

%

\setcounter{tocdepth}{1}

\title{Strict positivity of K\"ahler-Einstein currents}
\date{\today}

\author{Vincent Guedj}

\email{vincent.guedj@math.univ-toulouse.fr}

\address{Institut de Math. de Toulouse; UMR 5219, Université de Toulouse; CNRS, 
UPS, 118 route de Narbonne, F-31062 Toulouse, France}

\author{Henri Guenancia}
\email{henri.guenancia@math.cnrs.fr}

\address{Institut de Math. de Toulouse; UMR 5219, Université de Toulouse; CNRS, 
UPS, 118 route de Narbonne, F-31062 Toulouse, France}

\author{Ahmed Zeriahi}


\email{ahmed.zeriahi@math.univ-toulouse.fr}

\address{Institut de Math. de Toulouse; UMR 5219, Université de Toulouse; CNRS, 
UPS, 118 route de Narbonne, F-31062 Toulouse, France}

\begin{document}

\begin{abstract}  
K\"ahler-Einstein currents, also known as singular Kähler-Einstein metrics, have been introduced and constructed a little over a decade ago. These currents live on mildly singular compact Kähler spaces $X$ and their two defining properties are the following: they are genuine K\"ahler-Einstein metrics on $X_{\rm reg}$  and they admit local bounded potentials near the singularities of $X$.
In this note we show that these currents 
dominate a Kähler form
near the singular locus, when either $X$ admits a global smoothing,
or when $X$ has isolated smoothable singularities.
Our results   apply to klt pairs and allow us to show that if $X$ is any compact K\"ahler space of dimension $3$ with log terminal singularities, then any singular K\"ahler-Einstein metric of non-positive curvature dominates a K\"ahler form.
\end{abstract} 

\maketitle

\tableofcontents

\section*{Introduction}

\subsection*{Introducting the main problem}
\label{intro}
Einstein metrics are a central object in differential geometry.
A K\"ahler-Einstein metric on a complex manifold is a K\"ahler metric
whose Ricci curvature is proportional to the metric tensor.
A foundational result of Yau  \cite{Yau78} allows one to construct many examples of  
these fundamental objects. 

In connection with the Minimal Model Program,
singular K\"ahler-Einstein metrics  on mildly singular K\"ahler varieties $X$
have been constructed in \cite{EGZ09,BG,BBEGZ} and further studied by many authors
(see \cite{GZbook, Bouck18,DonICM18,ChiLi22} and the references therein).
These are K\"ahler forms $\omega_{\rm KE}=\omega+dd^c \f_{\rm KE}$
on the regular part $X_{\rm reg}$ of $X$, where $c_1(X)=\lambda [\omega]$ is proportional to a reference K\"ahler class
$[\omega]$, such that
$$
\Ric(\omega_{\rm KE})=\lambda \omega_{\rm KE},
$$
and which admit bounded (or mildly unbounded) local potential near the singularities $X_{\rm sing}$.
In particular $\omega_{\rm KE}$ uniquely extends as a positive closed current to $X$.

One constructs $\omega_{\rm KE}=\omega+dd^c \f_{\rm KE}$ by solving a 
complex Monge-Amp\`ere equation
\begin{equation}
\label{eq KE}
\tag{KE}
(\omega+dd^c \f_{\rm KE})^n=e^{-\lambda \f_{\rm KE}} \mu_X,
\end{equation}
where $n=\dim_{\C} X$ and $\mu_X$ is an appropriate volume form.

Due to the presence of singularities, the geometry of these {\it K\"ahler-Einstein currents} is quite mysterious
despite recent important progress \cite{DS,HS17}.
Understanding the asymptotic behavior of $\f_{\rm KE}$ near $X_{\rm sing}$ is a major open problem.

\smallskip

In this note we partially address the following basic question:

\begin{quest*}
Is the Kähler-Einstein metric $\omega_{\rm KE}$ solving \eqref{eq KE} a {\it K\"ahler current}, i.e. does $\omega_{\rm KE}$
dominate a K\"ahler form ? 
\end{quest*}

\noindent
Note that unless $X$ is smooth, $\omega_{\rm KE}$ is never dominated by a Kähler form, cf Corollary~\ref{domination}. \\

\noindent
{\bf Known cases.} Although the question sounds much easier than asking for asymptotics of $\omega_{\rm KE}$, it has not yet been addressed in full generality and only a handful of particular cases seem to be understood, which we briefly survey below.\\

\noindent
$\bullet$ {\it Orbifold singularities}. If $X$ has only finite quotient singularities (i.e. $X$ is an orbifold) then $\om_{\rm KE}$ is a smooth orbifold Kähler metric (i.e. it is a Kähler metric in the local smooth uniformizing charts). In particular, $\om_{\rm KE}$ is a Kähler current. Since $2$-dimensional log terminal singularities are quotient singularities, it follows that the main Question admits a positive answer in dimension two for log terminal singularities. More generally, $\om_{\rm KE}$ is orbifold-smooth near any quotient singularity of $X$ at least when $X$ is projective \cite{LiTian19}; in particular it is a Kähler current on the orbifold locus of $X$. \\

\noindent
$\bullet$ {\it Limits of smooth spaces}. Other examples include when $X$ admits a crepant resolution, or when $X$ can be suitably obtained as limit of smooth Kähler-Einstein manifolds \cite{RZ0, DS, HS17}. The common thread in the situations appearing in {\it loc. cit.} is that one can embed $\iota: X\hookrightarrow \mathbb P^N$ with $\om_{\rm KE}=\om_{\rm FS}|_X+dd^c \varphi$ and approximate $(X, \om_{\rm KE})$ by a sequence of compact Kähler manifolds $(X_k, \om_k)$ such that there are
\begin{enumerate}[label=$\bullet$]
\item[-] embeddings $\iota_k:X_k \hookrightarrow \mathbb P^N$ 
with $\om_k=\om_{\rm FS}|_{\iota_k(X_k)}+dd^c \varphi_k$ and $ \|\varphi_k\|_{L^{\infty}(X_k)} \le C$,
\item[-] uniform Ricci lower bounds $\Ric \, \om_k \ge -C \om_k$
\end{enumerate}
for some uniform constant $C>0$. From these estimates, an easy application of Chern-Lu formula yields $\om_k\ge C^{-1} \om_{\rm FS}|_{\iota_k(X_k)}$ from which the strict positivity of $\om_{\rm KE}$ follows. 

In summary, the ideas above allow to treat the case of singular Kähler-Einstein varieties that are degenerations of {\it smooth} Kähler manifolds with uniform lower Ricci bound and uniform $L^{\infty}$ bound for its potential.\\

Unfortunately, general singular Kähler-Einstein metrics $\om_{\rm KE}$ on a singular space $X$ arise by construction as limits of smooth Kähler metrics $\om_\ep$ on a desingularization $\pi:\wh X\to X$ such that $\Ric_{\om_\ep} \to -\infty$ along any divisor $E\subset \wh X$ with positive discrepancy. Moreover, a general singular variety cannot be obtained as a degeneration of smooth varieties, since there might be (even local and topological) obstructions to smoothability. This prevents one from directly applying the above ideas to a general singular Kähler-Einstein space. This paper grew out as an attempt to find the largest possible field of application of the general technique recalled above. More precisely, the goal of the present paper is threefold: 

\begin{enumerate}[label={\bf \Alph*.}]
\item Formulate a general framework where the above global strategy applies {\it mutatis mutandis}, including non-projective Kähler spaces as well as singular pairs.
\item Provide a local version of the approach that enables to treat isolated smoothable singularities. 
\item Develop a systematic use of the technique in order to enlarge the class of singularities (beyond the smoothable or crepant ones) for which one can answer positively the Main Question above, using a {\it several step} degeneration process.
\end{enumerate}

So far, most (general) results about singular Kähler-Einstein metrics have been derived by establishing uniform a priori estimates on smooth approximants (space and metric). The novelty of our approach is that given a singular space/metric $(X,\om_{\rm KE})$, we are able to answer the Main Question positively for $(X,\om_{\rm KE})$ as soon as one can suitably approximate our space/metric by a {\it possibly singular} space/metric on which we can {\it qualitatively} answer the Main Question, as long as we have a lower bound on the Ricci curvature and a uniform bound on a suitable potential. We refer to the last paragraph of this introduction or \textsection~\ref{threefolds} for an explicit application of this principle.

\subsection*{Statement of the results}
Let us now get a bit more explicit and expand what we mean by the above stated goals. Our first main result is as follows:

\begin{bigthm} \label{thmA}
    Let $X$ be a normal compact Kähler space with log terminal singularities such that $K_X\sim_{\Q}\mathcal O_X$, and let $\alpha$ be a Kähler class. 
    If $X$  is smoothable then the unique singular Ricci-flat metric $\omega_{\rm KE} \in \alpha$ 
    dominates a K\"ahler form.
\end{bigthm}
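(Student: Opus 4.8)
The plan is to deform $(X,\omega_{\rm KE})$ into a family of \emph{smooth} Ricci-flat manifolds and extract uniform estimates, then run the Chern–Lu argument recalled in the introduction. Concretely, since $X$ is smoothable we have a flat family $\pi\colon\cX\to\bD$ with $\cX_0=X$ and $\cX_t$ smooth for $t\neq 0$; shrinking $\bD$ we may assume $\cX$ is Kähler and, after possibly base change and normalization, that $K_{\cX/\bD}$ is $\Q$-trivial relatively (using $K_X\sim_\Q\cO_X$ and that log terminal singularities are rational, so the central fiber stays klt, hence the total space is klt over a neighbourhood of $0$). First I would choose a Kähler form $\Omega$ on $\cX$ restricting to a class $\alpha_t$ on each fiber with $\alpha_0=\alpha$, and on each smooth fiber $\cX_t$ solve the Monge–Ampère equation to produce the Ricci-flat representative $\omega_t=\Omega_t+dd^c\varphi_t\in\alpha_t$ (for $t=0$ this is $\omega_{\rm KE}$). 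The first key step is a \emph{uniform $L^\infty$ bound} $\|\varphi_t\|_{L^\infty(\cX_t)}\le C$ independent of $t$: this follows from the now-standard uniform Kołodziej-type estimates in families (the volume forms $\mu_{\cX_t}$ have densities with uniformly bounded $L^p$ norm, $p>1$, against a fixed background, because the family is klt), cf.\ the references in the introduction.

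The second, and more delicate, step is to realize $\cX$ — or at least a neighbourhood of $X$ in $\cX$ — inside a fixed projective space $\PP^N$ so that the Chern–Lu machinery applies uniformly. Here I would use that $\alpha$, being a Kähler class on a space with $K_X$ torsion, can be perturbed to a rational (indeed, after scaling, very ample on a projective degeneration) class; in the genuinely Kähler non-projective case one instead embeds using a relatively ample line bundle on a bimeromorphic projective model of the family, or argues directly with a fixed Kähler form $\omega_{\cX}$ on $\cX$ in place of $\omega_{\rm FS}$. The point is to obtain, on each $\cX_t$, that $\omega_t$ is cohomologous to a \emph{fixed} reference Kähler form (the restriction of $\Omega$) with the potential $\varphi_t$ uniformly bounded, and that $\Ric(\omega_t)=0\ge -C\,\omega_t$ trivially. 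Then the Chern–Lu inequality applied to the identity map $(\cX_t,\omega_t)\to(\cX_t,\Omega_t)$, combined with the uniform bound on $\varphi_t$ and a uniform upper bound on the holomorphic bisectional curvature of $\Omega_t$ (which holds since $\Omega$ is a fixed smooth form on the compact-over-$\bD$ total space), yields $\omega_t\ge C^{-1}\Omega_t$ on $\cX_t$ for $t\neq 0$, with $C$ independent of $t$.

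The final step is to pass to the limit $t\to 0$. On $\cX_t^{\rm reg}$ near a singular point of $X$, Chern–Lu gives a pointwise lower bound $\tr_{\omega_t}\Omega_t\le C$; since $\varphi_t\to\varphi_{\rm KE}$ in $C^\infty_{\rm loc}(X_{\rm reg})$ (uniqueness of the normalized Ricci-flat potential and stability of Monge–Ampère under the family, again from the uniform estimates), we conclude $\omega_{\rm KE}\ge C^{-1}\Omega_0=C^{-1}\Omega|_X$ on $X_{\rm reg}$, and since $\Omega|_X$ is a genuine Kähler form on $X$ (as a subspace of $\cX$) and $\omega_{\rm KE}$ has bounded local potentials, this inequality of currents extends across $X_{\rm sing}$. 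Hence $\omega_{\rm KE}$ dominates a Kähler form.

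The main obstacle I anticipate is Step 2 in the \emph{non-projective Kähler} setting: the Chern–Lu argument as recalled needs a fixed ambient Kähler form with a uniform \emph{upper} bisectional curvature bound along the family, and when $X$ is only Kähler one does not have a projective embedding — one must instead work intrinsically on the total space $\cX$, making sure that a single Kähler form $\Omega$ on $\cX$ (whose restrictions $\Omega_t$ automatically have uniformly bounded geometry on $\pi^{-1}(\overline{\bD_{1/2}})$) can serve as the reference class on every fiber, which forces some care in choosing the family and the Kähler class so that $[\Omega|_{\cX_t}]$ stays Kähler and limits to $\alpha$.
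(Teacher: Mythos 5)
Your proposal is essentially the paper's proof (cf.\ Theorem~\ref{thm strict}): uniform $L^p$ control of the densities gives a uniform $L^\infty$ bound on the potentials in the smoothing family, Chern--Lu against a fixed smooth $(1,1)$-form on the total space gives $\widetilde\omega_t\ge C^{-1}\omega_t$, and one then passes to the limit using uniqueness of the Ricci-flat potential and extends the inequality across $X_{\sing}$ by Lemma~\ref{extension}. The projective-embedding route you worry about in Step 2 is not needed: the definition of a $\Q$-Gorenstein smoothing already provides a global smooth $(1,1)$-form $\omega_{\mathcal X}$ on $\mathcal X$ restricting to Kähler forms in the right classes, and its bisectional curvature is bounded above simply because it is the restriction of a smooth form under local embeddings $\mathcal X\hookrightarrow\C^N$ — the intrinsic fallback you yourself propose at the end of Step 2 is exactly what the paper does.
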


This strict positivity result actually holds in more general contexts (canonically polarized varieties, $\Q$-Fano varieties,
klt pairs) as we explain in Theorem \ref{thm strict}.
The existence of a {\it global} smoothing is a rather restrictive assumption,
although it holds e.g. at the boundary of the moduli space of positively curved K\"ahler-Einstein metrics.
The latter has a natural compactification arising from the Gromov-Hausdorff topology,
 and the geometric meaning of the boundary points was elucidated in \cite{ DS,SSY}.
 
 \smallskip

An isolated singularity  is more likely to admit a local smoothing.
Shifting perspective we use the local singular theory developed in \cite{GGZ20}
to establish  a positivity result 
of solutions to local Monge-Amp\`ere equations
at isolated smoothable singularities.
As a consequence we obtain our second main result.

  \begin{bigthm}  \label{thmB}
  Let $X$ be a normal compact Kähler space with log terminal singularities such that $K_X\sim_{\Q}\mathcal O_X$, and let $\alpha$ be a Kähler class. 
 The unique singular Ricci-flat metric $\omega_{\rm KE} \in \alpha$ 
    dominates a K\"ahler form near any smoothable isolated singularity.
 \end{bigthm}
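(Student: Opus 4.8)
The plan is to run the Chern--Lu argument recalled in the introduction in a purely \emph{local} form, which is made available by the local pluripotential theory of \cite{GGZ20}.

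\textbf{Reduction to a local statement.} Since $K_X\sim_\Q\cO_X$ the current $\om_{\rm KE}$ is Ricci-flat. Fix a Stein neighbourhood $U$ of the smoothable isolated singularity $x_0$, together with an embedding $U\hookrightarrow\C^N$. On $U$ we may write $\om_{\rm KE}|_U=dd^c\f$ with $\f$ bounded on $U$, smooth on $U\setminus\{x_0\}$, solving the local Monge--Amp\`ere equation $(dd^c\f)^n=\mu_X$, where $\mu_X$ is a smooth positive volume form on $X_{\rm reg}$; by the klt hypothesis $\mu_X$ has finite mass near $x_0$, and its density against $(dd^c\|z\|^2)^n$ is uniformly bounded above and below on compact subsets of $U\setminus\{x_0\}$ and belongs to $L^p$ near $x_0$ for some $p>1$. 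It then suffices to prove that $dd^c\f$ dominates $(dd^c\|z\|^2)|_U$ on a smaller neighbourhood of $x_0$.

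\textbf{Comparison metrics on a smoothing.} As $x_0$ is smoothable, after shrinking there is a smoothing $p\colon\cX\to\bD$ with $\cX_0=U$, $\cX_t$ smooth for $t\neq0$, and a relative embedding $\cX\hookrightarrow\C^N\times\bD$ extending $U\hookrightarrow\C^N$; put $\b_t:=(dd^c\|z\|^2)|_{\cX_t}$, a K\"ahler form on the smooth fibres. Shrinking $\bD$, the family carries fibrewise smooth positive volume forms $\mu_t$ with $\mu_0=\mu_X$ whose densities against $\b_t^n$ are uniformly $L^p$-bounded and uniformly bounded above and below on fixed compact subsets of $U\setminus\{x_0\}$. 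On each smooth fibre we solve the Dirichlet problem
\[
(\b_t+dd^c\p_t)^n=\mu_t \quad\text{on }\cX_t,\qquad \p_t|_{\partial\cX_t}=g_t,
\]
with $g_t$ a continuous extension to $\partial\cX$ of $(\f-\|z\|^2)|_{\partial U}$, which is smooth since $\partial U\subset X_{\rm reg}$. From the local theory of \cite{GGZ20} we obtain existence, uniqueness, a uniform bound $\|\p_t\|_{L^\infty(\cX_t)}\le C$, and stability $\p_t\to\p_0$ as $t\to0$ (in $L^1_{\rm loc}$, and locally uniformly on $U\setminus\{x_0\}$); since $\p_0$ and $\f-\|z\|^2$ are bounded solutions with the same boundary values on the normal Stein space $U$, the comparison principle forces $\p_0=\f-\|z\|^2$. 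Hence $\om_t:=\b_t+dd^c\p_t$ is a smooth Ricci-flat K\"ahler metric on $\cX_t$ (its volume form being, up to a constant, the fibrewise holomorphic volume form), and $\om_t\to\om_{\rm KE}|_U$ weakly as $t\to0$.

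\textbf{Chern--Lu and passage to the limit.} Applying the Chern--Lu inequality to the holomorphic inclusion $(\cX_t,\om_t)\hookrightarrow(\C^N,dd^c\|z\|^2)$ --- whose target is flat and whose source satisfies $\Ric\,\om_t=0$ --- yields $\Delta_{\om_t}\log\tr_{\om_t}\b_t\ge0$, so $\log\tr_{\om_t}\b_t$ is $\om_t$-subharmonic on $\cX_t$. Fix $V$ with $x_0\in V\Subset U$ and smooth boundary, with fibres $\cV_t\Subset\cX_t$. On the compact set $\partial\cV_t\subset\cX_t\setminus\{x_0\}$ the classical interior a priori estimates for $(\b_t+dd^c\p_t)^n=\mu_t$ --- valid because $\cX\to\bD$ is a smooth family there, $\mu_t$ has smooth density uniformly bounded above and below, and $\|\p_t\|_{L^\infty}$ is controlled --- give $\tr_{\om_t}\b_t\le M$, uniformly for small $t$. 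Since $\log\tr_{\om_t}\b_t$ is $\om_t$-subharmonic and smooth up to $\partial\cV_t$, the maximum principle yields $\tr_{\om_t}\b_t\le M$ on all of $\cV_t$, whence $\om_t\ge M^{-1}\b_t$ there. Letting $t\to0$, the currents $\om_t$ converge weakly to $\om_{\rm KE}|_V$ while $\b_t\to(dd^c\|z\|^2)|_V$, so $\om_{\rm KE}\ge M^{-1}(dd^c\|z\|^2)$ on $V$; thus $\om_{\rm KE}$ dominates a K\"ahler form near $x_0$.

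\textbf{Main obstacle.} The crux is the second step: producing the auxiliary Ricci-flat metrics $\om_t$ on the smoothing fibres \emph{with a uniform $L^\infty$ bound} on $\p_t$, and identifying $\lim_t\om_t$ with $\om_{\rm KE}|_U$. This requires a robust Dirichlet theory for (degenerate) complex Monge--Amp\`ere equations over a family whose central fibre is singular, with estimates depending only on $L^p$ norms of the densities --- precisely what \cite{GGZ20} supplies. By contrast the Chern--Lu/maximum-principle step is soft, Ricci-flatness making the curvature contributions vanish; the only extra care needed there is the standard uniform interior estimate near $\partial\cV_t$, well away from $x_0$.
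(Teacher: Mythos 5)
Your overall architecture is the same as the paper's: solve a family of Dirichlet problems on the smoothing fibers with uniform $L^\infty$ control coming from \cite{GGZ20}, run Chern--Lu to get a lower bound on $\omega_t$ uniform in $t$, then pass to the limit as $t\to 0$. Steps 1 and 2 are essentially the paper's Proposition~\ref{thm:localsmoothing}(1) and the stability argument. The gap is in Step 3, in the sentence ``classical interior a priori estimates for $(\beta_t+dd^c\psi_t)^n=\mu_t$ \dots give $\tr_{\omega_t}\beta_t\le M$, uniformly for small $t$'' on $\partial\cV_t$.

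Interior second-order a priori estimates for the complex Monge--Amp\`ere equation on a domain, given only an $L^\infty$ bound on the potential and a smooth, uniformly nondegenerate density, are \emph{not} classical and cannot be cited as ``standard.'' For the real Monge--Amp\`ere equation they fail outright (Pogorelov's counterexample), and in the complex case the known local results require extra input (e.g.\ an a priori $W^{2,p}$ or $C^{1,1}$ bound, a boundary estimate, or a global structure as in Yau's closed-manifold argument); an $L^\infty$ bound alone does not close the estimate, because in the Yau/Aubin computation the cutoff terms regenerate the quantity one is trying to bound. This is exactly why the paper invests its technical effort not in interior estimates near an artificial inner boundary $\partial\cV_t$, but in \emph{boundary} Laplacian estimates at the genuine Dirichlet boundary $\partial X_t$, derived in the Appendix as a family version of the Guan--Li / Caffarelli--Kohn--Nirenberg--Spruck argument (Theorem~\ref{thm laplacian estimates}, which in turn needs the boundary gradient estimate of Proposition~\ref{prop gradient estimates} and the subsolution $\underline{u}_t$ constructed in Proposition~\ref{thm:localsmoothing}(2)). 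With that input, Chern--Lu is then run on all of $X_t$: the maximum of $\log\tr_{\theta_t}\widehat\omega_t - Au_t$ is either interior (handled by Chern--Lu and the uniform $L^\infty$ bound) or on $\partial X_t$ (handled by the boundary Laplacian estimate). Your proof would become correct if you replaced the unjustified interior estimate by the paper's boundary estimate at $\partial\cX_t$ and applied the maximum principle on the whole fiber rather than on the artificial subdomain $\cV_t$; as written, the key a priori estimate is asserted rather than proved.

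A smaller remark: you are right that with $\Ric\,\omega_t=0$ and $\mathrm{Bisec}_{\beta_t}\le 0$ (Gauss equation for the K\"ahler submanifold $\cX_t\subset\C^N$) Chern--Lu gives $\Delta_{\omega_t}\log\tr_{\omega_t}\beta_t\ge 0$, so the maximum principle applies cleanly; that part of your argument is fine, and it is a slight simplification of the paper's use of Chern--Lu, which keeps the linear-in-$\tr$ term and compensates with $-Au_t$.
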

 
  The result is more general and we refer the reader to Theorem \ref{thm:positivitylocale} and Corollary~\ref{cor klt pairs}
  for more precise results, including the case of klt pairs with isolated singularities.
  
  \smallskip

  As recalled 
  earlier, the basic idea is not new. Deforming $X$ 
  in a smooth K\"ahler
  approximant $X_t$,  we would like to use Chern-Lu formula \cite{Chern,Lu}
  and establish a uniform lower bound for smooth approximants $\omega_{{\rm KE},t} \ge C^{-1} \omega_t$
  on nearby smooth fibers. This requires to establish uniform a priori bounds for
  families  of degenerate   complex Monge-Amp\`ere potentials, 
  a theme which has known important progress in the last decade (see \cite{RZ,SSY,DGG}),
  but which still requires further understanding in order to extend Theorem \ref{thmB}
  to the positively curved setting.\\
  
  Our last result answers the Main Question positively in dimension $3$, 
   in non-positive curvature, unconditionally to any smoothability assumptions on the singularities. 
  
    \begin{bigthm}  \label{thmC}
  Let $X$ be a normal compact Kähler space of dimension three with log terminal singularities such that $K_X\sim_{\Q}\mathcal O_X$, and let $\alpha$ be a Kähler class.  The unique singular Ricci-flat metric $\omega_{\rm KE} \in \alpha$ dominates a K\"ahler form. 
 \end{bigthm}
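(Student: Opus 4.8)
\emph{Proof strategy.} The plan is to reduce the statement to a purely local question near each singular point and then to run a case analysis on the type of the singularity, matching each case with one of the results above.

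Since $\omega_{\rm KE}$ is a genuine K\"ahler metric on $X_{\rm reg}$ and $X$ is compact, a partition of unity reduces the statement to: for each $p\in X_{\rm sing}$ there exist a neighbourhood $U_p$ and $\e_p>0$ with $\omega_{\rm KE}\ge\e_p\,\om_X$ on $U_p$; by the shape of \eqref{eq KE} this involves only the germ $(X,p)$ and the bounded local potential of $\omega_{\rm KE}$, i.e.\ a solution of a local \MoAmp{} equation with locally bounded density in the sense of \cite{GGZ20}. I would then perform two preliminary reductions. Along the one-dimensional part of $X_{\rm sing}$ the transversal singularity is a two-dimensional log terminal, hence quotient, singularity, so $(X,p)$ is generically an orbifold point there and $\omega_{\rm KE}$ is orbifold-K\"ahler; thus only finitely many points of $X_{\rm sing}$ remain. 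Passing next to the local index-one cover $g\colon(\widehat X,\hat p)\to(X,p)$, which is quasi-\'etale (so the Ricci-flat potential pulls back, the adapted volume forms being compatible) and finite (so strict positivity transfers back downstairs by averaging over the Galois group), reduces us to a Gorenstein canonical Calabi--Yau threefold singularity.

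The core would be a case analysis of these singularities, using the two-dimensional and birational geometry of threefolds. If $(\widehat X,\hat p)$ is an isolated Gorenstein terminal point, it is a compound Du Val singularity, in particular an isolated hypersurface singularity, smoothed by its Milnor fibration, and Theorem~\ref{thm:positivitylocale} (the local form of Theorem~\ref{thmB}) applies; the same goes for the isolated non-terminal Gorenstein singularities which are (weighted) hypersurface cones, such as cones over del Pezzo surfaces. If $(\widehat X,\hat p)$ is a quotient singularity (for instance a cone $\C^3/\tfrac1r(1,\dots,1)$ with $r\le 3$, which does occur), the orbifold regularity recalled in the introduction gives the conclusion. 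The remaining singularities I would treat by a crepant partial resolution $\pi\colon Y\to\widehat X$ extracting exactly the discrepancy-zero divisors (possible since $\widehat X$ is canonical): then $Y$ is Gorenstein with only isolated terminal, hence compound Du Val, singularities, to which Theorem~\ref{thm:positivitylocale} applies, while the $\pi$-exceptional divisors lie over the du Val or quotient part of $\widehat X_{\rm sing}$, where the required domination $\pi^*\omega_{\rm KE}\ge\e\,\pi^*\om_{\widehat X}$ is read off in orbifold charts; the conclusion then descends through $\pi$ because the push-forward of a closed positive $(1,1)$-current with bounded potential is again positive. Any residual case (e.g.\ an index $\ge 2$ terminal singularity of type $cA/r$ or $cAx/r$ that is neither smoothable nor a quotient) would be disposed of by the several-step degeneration principle: such a singularity admits a $\Q$-Gorenstein deformation whose general fibre has only singularities already treated, and one concludes by running the Chern--Lu inequality along this deformation, with the uniform estimates discussed next.

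The step I expect to be the main obstacle is supplying the analytic input for these Chern--Lu arguments: a uniform $L^\infty$ bound on the \MoAmp{} potentials $\vp_t$ together with a uniform lower Ricci bound $\Ric\,\om_{{\rm KE},t}\ge -C\,\om_{{\rm KE},t}$ on the fibres, with $C$ independent of $t$. These are the ``uniform a priori bounds for families of degenerate complex \MoAmp{} potentials'' alluded to in the introduction, and they are delicate here because the central fibre is singular and the deformation need only be $\Q$-Gorenstein, not smooth, on the general fibre. A secondary point requiring care is the compatibility, with the Ricci-flat currents and with the strict positivity statement, of the quasi-\'etale covers and birational modifications used in the reduction, in particular the behaviour of $\pi^*\omega_{\rm KE}$ along the non-ample exceptional locus of $\pi$.
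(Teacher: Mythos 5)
Your proposal correctly identifies the main ingredients—index-one cover, terminalization, Reid's classification (Gorenstein terminal threefold implies cDV, hence smoothable), the local positivity result of Theorem~\ref{thm:positivitylocale}, and Chern--Lu—but the architecture is local where the paper's is global, and this is precisely where the argument breaks down.

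First, the elaborate case analysis is neither needed nor fully accurate. After one \emph{global} index-one cover (which exists because $K_X\sim_\Q\cO_X$), the space has Gorenstein canonical singularities, and one \emph{global} terminalization $\pi:\hat X\to X$ has $K_{\hat X}=\pi^*K_X\sim_\Z\cO_{\hat X}$, hence only isolated Gorenstein terminal, i.e.\ cDV, hence smoothable singularities. No orbifold reduction along the one-dimensional stratum, no separate treatment of cones over del Pezzo surfaces, and no residual $\Q$-Gorenstein deformation case is required. (Incidentally, the claim that isolated non-terminal Gorenstein canonical threefold singularities are weighted hypersurface cones is not correct in general, and the ``finitely many points that remain'' after your orbifold reduction are typically \emph{non-isolated} germs, for which Theorem~\ref{thm:positivitylocale} does not apply directly.)

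Second—and this is the genuine gap—the descent through the (partial) terminalization $\pi$ cannot be done the way you describe. You relegate to a ``secondary point'' the behaviour of $\pi^*\omega_{\rm KE}$ along the non-ample exceptional locus of $\pi$, but that is in fact the crux. The class $\pi^*\alpha$ on $\hat X$ is only nef and big, so $\pi^*\omega_{\rm KE}$ is degenerate along $\pi$-exceptional divisors and cannot dominate $\pi^*\omega_{\hat X}$ or any K\"ahler form on $\hat X$; conversely, a genuine K\"ahler--Einstein metric on the upstairs space, sitting in a K\"ahler class, is a \emph{different} current from $\pi^*\omega_{\rm KE}$, so strict positivity of the former does not transfer to the latter, and nothing can be ``read off in orbifold charts.'' The paper's mechanism, which is essential and missing from your proposal, is a degeneration of cohomology classes: one solves the Ricci-flat equation in the K\"ahler classes $\pi^*\alpha+\e\,\beta$ on $\hat X$, getting K\"ahler forms $\omega_\e$ with $\Ric\,\omega_\e=0$ and a uniform $L^\infty$ bound on the potentials (this is available because $\hat X$ is terminal so the canonical density is in $L^{1+\delta}$, combined with family Ko\l odziej-type estimates). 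Theorem~\ref{thm:positivitylocale} gives, for each fixed $\e>0$, the merely qualitative bound $\omega_\e\ge C_\e^{-1}\pi^*\omega_X$. Chern--Lu on $\pi^{-1}(X_{\rm reg})$—using $\Ric\,\omega_\e=0$, the uniform potential bound, and the bounded-above bisectional curvature of $\pi^*\omega_X$—then upgrades this to $\omega_\e\ge C^{-1}\pi^*\omega_X$ with $C$ independent of $\e$. Letting $\e\to0$ and pushing forward through $\pi$ gives $\omega_{\rm KE}\ge C^{-1}\omega_X$. Without this perturbation-to-K\"ahler-class step, the strict positivity simply does not come down from $\hat X$ to $X$.
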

 
 Here again, there is a version of this statement in negative curvature, cf Theorem~\ref{negative}, which surprizingly requires significantly more work than the Calabi-Yau case, cf below. The general idea behind the proof of Theorem~\ref{thmC} is that one can reduce the situation to the smooth case via a {\it two step} degeneration. More precisely it goes as follows. \\

 {\it Step 1.} One can reduce to the case where $X$ has canonical singularities, up to passing to the index one cover. 
 
 \smallskip
 
 {\it Step 2.} The first degeneration amounts to considering a terminalization $\pi: \wh X\to X$. Since $X$ is canonical, $\pi$ is crepant, i.e.   $K_{\wh X}=\pi^*K_X$ is trivial.  Now, one can realize $\pi^*\om_{\rm KE}$ as limit of singular Ricci flat metrics $\omega_\ep \in \pi^*\alpha+\ep \beta$ where $\beta$ is a Kähler class on $\wh X$. 
 
 \smallskip
 
 {\it Step 3.} Now, $\wh X$ has terminal singularities of index one, hence these are (locally) smoothable by a classification result of Reid and we can apply Theorem~\ref{thmB} to $(\wh X, \om_\ep)$ and use Chern-Lu inequality on the singular space $\wh X$ to conclude. \\
 
 In the case where $K_X$ is ample, the terminalization map is not crepant anymore. Instead, a boundary divisor $\wh \Delta$ arises on $\wh X$ so that $K_{\wh X}+\wh \Delta=\pi^*K_X$ and we are then required to generalize Theorem~\ref{thmB} to the case of klt pairs. This is not as innocuous as it may sound since even though $\wh X$ has only isolated singularities, it is not the case for the pair $(\wh X, \wh \Delta)$ anymore! Taking care of this difficulty involves an additional (third) degeneration process (cf. Theorem~\ref{klt pairs}), highlighting the guiding principle of this article.

 \subsubsection*{Contents}
 
 We recall basic facts from  analysis on complex spaces in Section \ref{sec:prelim}.
 We prove Theorem \ref{thmA} in Section \ref{sec:global},
 using uniform a priori estimates from \cite{SSY,DGG}.
We study holomorphic families of Dirichlet problems for the complex Monge-Amp\`ere equation
  in Section \ref{sec:isolated}, using  a priori estimates from \cite{GL10,GGZ20}, 
  and  prove Theorem \ref{thmB}  in Section \ref{sec:corisolated}.
  We use the previous techniques, together with some classical facts from the MMP in dimension 3,
  to establish Theorem \ref{thmC} in Section \ref{threefolds}.
  We gather in the Appendix 
  family versions of known
  estimates that are needed to establish our main results.

 \begin{ackn} 
The authors are partially supported by the research project Hermetic (ANR-11-LABX-0040),
and the ANR projects Paraplui and Karmapolis.
\end{ackn}

 \section{The Monge-Amp\`ere operator on complex spaces} \label{sec:prelim}

  In this section we let $X$ be a reduced complex analytic space  of pure dimension $n \ge 1$. 
 We will denote by $X_{\reg}$ the complex manifold of regular points of $X$. The set  
 $$
 X_{\sing} := X \setminus X_{\reg}
 $$
  of singular points is an analytic subset of $X$  of complex codimension $\ge 1$.

  \subsection{Plurisubharmonic functions}

 By definition for each point $x_0 \in X$ there exists a neighborhood $U$ of $x_0$ and a local embedding $j: U \hookrightarrow \mathbb C^N$ onto an analytic subset of  $\mathbb C^N$ for some $N \ge 1$.
 
 Using these local embeddings, it is possible to define the spaces of smooth forms of given degree on $X$.
  The notion of currents on $X$ is then defined by duality by their action on compactly supported smooth forms on $X$. The operators $\partial $ and $\bar{\partial }$, $d$, $d^c$ and $dd^c$ are then well defined by duality (see \cite{Dem85} for a careful treatment). 
  
  In the same way one can define the analytic notions of holomorphic and plurisubharmonic functions. 
  There are essentially two different notions :
 
 \begin{defi} 
 Let  $u : X \longrightarrow \mathbb R \cup \{-\infty\}$ be a given function.
 
 1. We say that   $u$ is  plurisubharmonic on $X$ if it is locally the restriction of a plurisubharmonic function on a local embedding of 
 $X$ onto an analytic subset of $\mathbb C^N$. 
 
 2. We say that  $u$ is  weakly plurisubharmonic on $X$ if $u$ is locally bounded from above on $X$ and  its restriction to the complex manifold $X_{\reg}$ is plurisubharmonic.
 \end{defi}

Fornaess and Narasimhan proved in \cite{FN} that   $u$ is plurisubarmonic on $X$ if and only if for any analytic disc 
$h : \mathbb D \longrightarrow X$, the restriction $u \circ h$ is subharmonic  on $\mathbb  D$ or identically $- \infty$.

If $u$ is  weakly plurisubharmonic on $X$,  $u$ is plurisubharmonic on $X_{\reg}$, hence upper semi-continuous on $X_{\reg}$. 
Since no assumption is made on $u$ at singular points, it is natural to extend $u$  to $X$ by the following formula :
\begin{equation} \label{eq:extension}
u^* (x) := \limsup_{X_{\reg} \ni y \to x } \,  u (y), \, \, x \in X.
\end{equation}
 The function $u^*$  is upper semi-continuous, locally integrable on $X$ and  satisfies $dd^c u^* \ge 0$ in the sense of currents on $X$. 
By Demailly \cite{Dem85},  the two notions are equivalent when $X$ is locally irreducible. More precisely we will need the following result :

\begin{thm} \cite{Dem85}
Assume that $X$ is a locally irreducible analytic space and $u : X \longrightarrow \mathbb R \cup \{-\infty\}$
is a weakly plurisubharmonic  function on $X$, then  the function $u^*$ defined  by   (\ref{eq:extension})
is plurisubharmonic on $X$.
\end{thm}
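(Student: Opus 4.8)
The plan is to reduce everything to a local statement at a single point $x_0 \in X_{\sing}$, so fix a local embedding $j \colon U \hookrightarrow \mathbb C^N$ realizing a neighborhood of $x_0$ as an analytic subset $V = j(U) \subset \mathbb C^N$, which we may assume irreducible (this is where local irreducibility is used). The function $u$ is plurisubharmonic on $V_{\reg}$ and locally bounded above, and $u^*$ is its upper-semicontinuous extension via \eqref{eq:extension}; we already know from the discussion preceding the statement that $u^*$ is upper semicontinuous, locally integrable, and satisfies $dd^c u^* \ge 0$ as a current on $X$. What remains is to upgrade ``$dd^c u^* \ge 0$ as a current'' to ``$u^*$ is plurisubharmonic in the sense of Definition 1.1'', i.e.\ to produce, locally near $x_0$, a plurisubharmonic function $\widetilde u$ on an open set of $\mathbb C^N$ whose restriction to $V$ equals $u^*$.

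First I would handle the normal case as a stepping stone: if $X$ is locally irreducible and \emph{normal}, then $X_{\sing}$ has codimension $\ge 2$, $X_{\reg}$ is connected near $x_0$, and a psh function on $X_{\reg}$ that is locally bounded above extends across $X_{\sing}$ by the classical Grauert--Remmert / Riemann-type extension theorem for psh functions across analytic sets of codimension $\ge 2$ (applied after pushing forward via a resolution, or directly on the normalization). Concretely, take a resolution $\pi \colon \widehat V \to V$; then $\pi^* u := u \circ \pi$ is defined and psh on the dense open set where $\pi$ is an isomorphism onto $V_{\reg}$, is locally bounded above, hence extends to a genuine psh function on all of $\widehat V$; pushing forward and using that $\pi_* \mathcal O_{\widehat V} = \mathcal O_V$ (normality) together with the submean-value characterization of Fornaess--Narasimhan recalled above, one checks that the resulting function on $V$ coincides with $u^*$ and is psh on $V$. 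For the general locally irreducible (not necessarily normal) case, pass to the normalization $\nu \colon \widetilde V \to V$, which is a finite map that is a biholomorphism over $V_{\reg}$ minus a small set; apply the normal case to $\nu^* u$ to get a psh function on $\widetilde V$, and then use that $\nu$ is finite and surjective with connected fibers over the (irreducible) $V$ to descend — the key point being that for a finite surjective map the pushforward of a psh function (taking the max over the fiber, or more precisely checking the submean inequality along analytic discs lifted through $\nu$) is psh, and this pushforward agrees with $u^*$ by construction of the upper-regularization.

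The technical heart — and the step I expect to be the main obstacle — is the descent through the normalization in the non-normal case: one must verify that the function obtained on $V$ genuinely satisfies the Fornaess--Narasimhan disc criterion (subharmonicity of $u^* \circ h$ for every analytic disc $h \colon \mathbb D \to V$), and this requires lifting discs through the finite map $\nu$ and controlling what happens over the non-normal locus, where distinct branches of $V$ meet. Here one invokes that an analytic disc into $V$ either lifts to $\widetilde V$ after possibly a base change $z \mapsto z^k$ of the disc, or is constant, together with the maximum principle to combine contributions from different sheets; the local boundedness from above of $u$ is exactly what prevents pathologies. An alternative, perhaps cleaner route that avoids resolutions altogether is to work directly with the integral representation: since $dd^c u^* \ge 0$ on $X$ and $u^*$ is locally integrable, represent $u^*$ locally on $V$ as a potential of this positive current plus a pluriharmonic correction, and then use the Ohsawa--Takegoshi-type extension or a direct Hartogs-figure argument on $\mathbb C^N$ to extend it to an ambient psh function; but this too runs into the same non-normal crossing subtlety, so I would present the normalization argument as the primary one and cite \cite{Dem85,FN} for the disc-lifting lemmas.
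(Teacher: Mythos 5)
The paper does not actually prove this statement: it cites it to Demailly \cite{Dem85} and moves on, so there is no internal proof to compare your proposal against. Your write-up is therefore doing more work than the paper itself, but as it stands it has real gaps, and you flag the central one yourself without closing it.

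The main issue is the descent step, in both the resolution and the normalization versions. Invoking $\pi_* \mathcal O_{\widehat V} = \mathcal O_V$ does nothing for plurisubharmonic functions; to descend a psh $w$ from $\widehat V$ to $V$ one uses that $w$ is constant on the compact connected fibers of $\pi$, but the resulting function $v$ on $V$ is then only \emph{weakly} psh and upper semicontinuous --- that is, exactly the class of function one started with, so nothing has been gained and the argument is circular. To break the circle you appeal to the Fornaess--Narasimhan disc criterion and to lifting discs through $\pi$ (resp.\ $\nu$). That lifting works fine (via the valuative criterion for properness, or via normalizing the fiber product) for a disc $h\colon \mathbb D \to V$ whose image meets $V_{\reg}$, and for such $h$ the identity $u^*\circ h = w\circ \widehat h$ can be justified by the uniqueness of the bounded subharmonic extension across a polar set together with the inequality $w(\widehat h(z_0)) \le \limsup_{V_{\reg}\ni v\to h(z_0)} u(v) = u^*(h(z_0))$. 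But the FN criterion demands subharmonicity of $u^*\circ h$ for \emph{every} disc, including those with $h(\mathbb D)\subset V_{\sing}$ (which certainly exist as soon as $\dim V_{\sing}\ge 1$, e.g.\ for a normal threefold germ whose singular locus is a curve). For such discs there is no lift through $\pi$: the image lands in the exceptional set, the lifting-by-properness argument has no foothold, and your sketch says nothing about them. Handling this case is, as far as I can tell, precisely where the real work lies (one has to control $u^*$ restricted to $V_{\sing}$, possibly by downward induction on dimension, or use a branched-cover/Noether-normalization presentation rather than a resolution), and is presumably what Demailly's argument is designed to address.

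Two smaller remarks. First, ``passing to a Moser normal form'' and related regularity phrases should be avoided here: the ambient space is fixed, only the psh function is being extended. Second, the proposed alternative route via Ohsawa--Takegoshi or a Hartogs-figure argument does not apply as stated: Ohsawa--Takegoshi extends holomorphic sections with $L^2$ control, not psh functions, and Hartogs-type extension for psh functions concerns extension across pluripolar or analytic subsets of the \emph{domain of definition}, not extension off a lower-dimensional analytic set $V$ to an ambient open set of $\mathbb C^N$. So the safest course in this paper is exactly what the authors do --- quote Demailly --- rather than attempt a self-contained proof.
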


Observe that since $u$ is plurisubharmonic on $X_{\reg}$, we have $u^* = u$ on $X_{\reg}$. 
Hence $u^*$ is the upper semi-continuous extension of $u|_{X_{\reg}}$  to $X$.



\smallskip

 Following \cite{FN}  we say that $X$ is Stein if it admits a ${\mathcal C}^2$-smooth strongly plurisubharmonic exhaustion.
We will use the following definition :

\begin{defi} 
A domain $\Omega \Subset X$ is   strongly pseudoconvex if it admits a negative ${\mathcal C}^2$-smooth strongly plurisubharmonic exhaustion,
 i.e. a function $\rho$ strongly plurisubharmonic in a neighborhood $\Omega'$ of $\overline{\Omega}$ such that  $\Omega := \{ x \in \Omega' \, ; \, \rho (x) < 0\}$ and  
 $$
 \Omega_c := \{x \in \Omega'; \, \rho (x) < c\} \Subset \Omega
 $$
 is relatively compact for any $c < 0$.
  \end{defi}
 
 Our complex spaces will be assumed to be reduced, locally irreducible of dimension $n \ge 1$. 
 We denote by $\mathrm{PSH}(X)$ the set of plurisubharmonic functions on $X$.

 \subsection{Dirichlet problem on singular complex spaces} \label{sec:Dircont}
 
  The complex Monge-Amp\`ere measure $(dd^c u)^n$ of a smooth psh function in a domain of $\C^n$  is 
  the   Radon measure
  $$
  (dd^c u)^n=c \det \left( \frac{\partial^2 u}{\partial z_i \partial \overline{z}_j} \right) dV_{\rm eucl},
  $$
  where $c>0$ is a normalizing constant. The definition has been extended to any bounded psh function
  by Bedford-Taylor, who laid down the foundations of {\it pluripotential theory} in \cite{BT76,BT82}.

  The Dirichlet problem for the complex Monge-Amp\`ere operator has been studied extensively
  by many authors   (see \cite{GZbook} and the references therein).

\smallskip

 The complex Monge-Amp\`ere operator has been defined and studied 
 on complex spaces by Bedford in \cite{Bed82} and  Demailly in \cite{Dem85}. 
  It turns out that if $u \in \mathrm{PSH}(X) \cap L^{\infty}_{\rm loc} (X)$, 
  the  Monge-Amp\`ere measure $(dd^c u)^n$ is well defined on $X_{\rm reg}$
   and can be extended  to $X$ as a Borel  measure with zero mass on $X_{\rm sing}$.
  Thus all standard properties of the complex Monge-Amp\`ere operator  acting on $ \mathrm{PSH}(X) \cap L^{\infty}_{\rm loc} (X)$ 
 extend to this setting (see \cite{Bed82,Dem85}).  
 
 \smallskip
 
 The Dirichlet problem has been studied only recently in that context. We recall the following
 which is a combination of \cite{GGZ20} and \cite{DFS21,Fu21}.

\begin{thm} \label{thm:ggz&co}
Let $X$ be a Stein space of dimension $n \ge 1$, reduced and locally irreducible,
with an isolated log terminal singularity $X_{\rm sing}=\{p\}$.
Let $\Omega \subset X$ be a bounded strongly pseudoconvex domain with smooth boundary.
Fix a smooth volume form $dV$ on $X$, $\phi \in {\mathcal C}^{\infty}(\partial \Omega)$
 and $0 <f \in {\mathcal C}^{\infty}(\overline{\Omega} \setminus \{p\})$
with $f \in L^p(\Omega,dV)$ for some $p>1$. Fix $\lambda \in \R^+$.
Then there exists a unique plurisubharmonic function $u$ in $\Omega$
wich satisfies the following:
\begin{itemize}
\item[$\bullet$] $u$ is continuous on $\overline{\Omega}$ with $u_{|\partial \Omega}=\phi$;
\item[$\bullet$] $u$ is smooth in ${\Omega} \setminus \{p\}$
\item[$\bullet$] $u$ satisfies $(dd^c u)^n=e^{\lambda u}f dV$.
\end{itemize}
\end{thm}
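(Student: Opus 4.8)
The plan is to combine an existence result coming from the local pluripotential theory developed in \cite{GGZ20} with the interior regularity statements of \cite{DFS21,Fu21}, and to glue these together with a continuity-up-to-the-boundary argument and a standard uniqueness principle. First I would address \emph{uniqueness}: if $u_1,u_2$ are two solutions, then on $\Omega\setminus\{p\}$ the comparison principle for the Monge--Amp\`ere operator on the complex manifold $\Omega\setminus\{p\}$ (valid because both functions are bounded psh and the singular point carries no Monge--Amp\`ere mass) together with the monotonicity of $t\mapsto e^{\lambda t}$ forces $u_1=u_2$; here one uses that $u_1=u_2=\phi$ on $\partial\Omega$ and that $\{p\}$ is pluripolar so it can be ignored. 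This is the easy part.

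For \emph{existence} I would proceed in three stages. Stage one: solve the equation with a bounded, continuous right-hand side by a Perron/envelope method on the singular space, exactly as in \cite{GGZ20}; since $f\in L^p(\Omega,dV)$ with $p>1$, the measure $e^{\lambda u}f\,dV$ is a well-behaved density for any fixed bounded candidate $u$, and a fixed-point (or monotone iteration in $\lambda$, using $\lambda\ge 0$ so that increasing $u$ increases the right-hand side) produces a bounded psh solution $u$ on $\Omega$ which is continuous on $\overline\Omega$ with $u|_{\partial\Omega}=\phi$. The continuity at the boundary uses the strong pseudoconvexity of $\Omega$ and the smoothness of $\partial\Omega$ and $\phi$ to build local barriers; continuity on the interior including at $p$ follows from the pluripotential estimates (a priori modulus of continuity / Kolodziej-type bounds adapted to the singular setting) in \cite{GGZ20}. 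Stage two: away from $p$, the space $\Omega\setminus\{p\}$ is a genuine complex manifold, $f$ is smooth and strictly positive there, and $u$ is a bounded (hence, by stage one, continuous) solution of $(dd^cu)^n=e^{\lambda u}f\,dV$; classical Caffarelli--Kohn--Nirenberg--Spruck / Evans--Krylov elliptic regularity bootstrapping then gives $u\in{\mathcal C}^\infty(\Omega\setminus\{p\})$. Stage three: one checks that the globally constructed measure $(dd^cu)^n$ on $\Omega$ (which has no mass on $X_{\rm sing}=\{p\}$) really equals $e^{\lambda u}f\,dV$ as measures on all of $\Omega$, not merely on $\Omega\setminus\{p\}$; this is automatic because both sides put no mass on the pluripolar set $\{p\}$.

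The main obstacle I expect is \emph{continuity (and the precise interface between boundedness and regularity) at the singular point $p$}. One must simultaneously control the solution near $\partial\Omega$, where the boundary is smooth and the classical Bremermann--Walsh barrier technique applies, and near $p$, where the geometry of the log terminal singularity enters and one cannot use smooth local charts; the $L^p$ hypothesis on $f$ together with the log terminal assumption is exactly what makes the relevant capacity/energy estimates of \cite{GGZ20} work, and threading those estimates through to get a continuous (not merely bounded) solution is the delicate point. The existence part genuinely relies on \cite{GGZ20} and I would invoke it as a black box; the regularity on $\Omega\setminus\{p\}$ and the uniqueness are then routine. A final remark: the hypothesis $\lambda\ge 0$ is used to keep the Monge--Amp\`ere problem monotone, which both simplifies existence (monotone iteration) and makes uniqueness immediate; no smallness of $\lambda$ is needed because the measure stays finite and the comparison principle handles the rest.
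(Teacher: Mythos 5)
Your proposal is correct and takes essentially the same route as the paper, which in fact records this theorem without proof as ``a combination of \cite{GGZ20} and \cite{DFS21,Fu21}'': the pluripotential machinery of \cite{GGZ20} supplies existence, the uniform $L^\infty$ bound, and continuity on $\overline\Omega$ including at the singular point $p$, while \cite{DFS21,Fu21} supply interior smoothness on the manifold locus $\Omega\setminus\{p\}$, and your stage one / stage two / uniqueness-via-comparison decomposition mirrors exactly that division of labor. One small imprecision worth noting: the parenthetical ``monotone iteration in $\lambda$, using $\lambda\ge 0$ so that increasing $u$ increases the right-hand side'' does not quite work as stated --- for $\lambda>0$ the natural Picard scheme $u_{k+1}$ solving $(dd^c u_{k+1})^n=e^{\lambda u_k}f\,dV$ with fixed boundary data alternates rather than converges monotonically; the role of $\lambda\ge 0$ is only (as you also say) to make the comparison principle and hence uniqueness automatic, and existence in \cite{GGZ20} comes from an envelope/fixed-point argument --- but since you invoke \cite{GGZ20} as a black box for existence, this aside does not affect the validity of your argument.
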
  
 
As shown further in \cite{GGZ20}, one has a uniform a priori bound on $\|u\|_{L^{\infty}(\Omega)}$
which depends on $n$, $p>1$ and $\|f\|_{L^p(\Omega)}$.
This bound only weakly depends on the geometry of $\Omega \hookrightarrow X$,
as we indicate in Theorem \ref{thm:uniformfamily}
so as to establish  a uniform family version of this estimate.

 \subsection{Canonical measure of a $\Q$-Gorenstein germ}
 \label{sec can measure}
 
 Let $(X,x)$ be a germ of normal complex space of dimension $n$ such that $mK_X$ is Cartier for some integer $m\ge 1$. If $\sigma$ is a trivialization of $mK_X$ over $X_{\rm reg}$, then the expression
  \[i^{n^2} (\sigma \wedge \bar \sigma)^{\frac 1m}\]
  defines a positive measure $\mu_{X,\sigma}$ on $X_{\rm reg}$; we still denote by $\mu_{X,\sigma}$ its trivial extension to $X$. If $\tau$ is a trivialization of $m'K_X$, then there exists $g\in \mathcal O_X(X)^*$ such that $\mu_{X,\tau}=|g|^2\mu_{X, \sigma}$ so that the qualitative behavior of the measure on the singular germ does not depend on the choice of $\sigma$. In the following, one will just write $\mu_X$ for $\mu_{X,\sigma}$. 
  
  Let $\omega$ be a smooth hermitian form on $X$, restriction of a smooth hermitian form under an embedding $(X,x)\hookrightarrow \mathbb C^N$. We denote by $f$ the density of $\mu_X$ with respect to $\omega^n$, that is, $\mu_X=f\omega^n$. In the following, the $L^p$ spaces are considered with respect to  $\omega^n$. \\

We will see below (cf proof of Lemma~\ref{smooth germ}) that $-\log f$ is quasi-psh; in particular, $\log f \in L^1$. As a consequence, one can make the following definition

\begin{defi}
Let $X,\mu_X, \omega, f$ be as above. The Ricci curvature current of $\omega$ is defined as 
  \[\Ric \, \om :=dd^c \log f= -dd^c \log\big(\frac{\om^n}{\mu_X}\big).\]
This expression yields a $(1,1)$ current with potentials which is independent of the trivialization $\sigma$. On $X_{\rm reg}$, it coincides with the usual Ricci curvature of the Kähler metric $\omega$, but we will see in the lemma below that $\Ric \, \omega$ is not a smooth form unless $X$ is smooth.
\end{defi}

    One can extend the definition of Ricci curvature for currents that are not necessarily smooth. More precisely, let $T=dd^c \varphi$ be a positive $(1,1)$ current with potential $\varphi \in L^{\infty}(X)$. Assume that $T^n=g\om^n$ has a density $g$ satisfying $\log g\in L^1$. One defines $\Ric \, T:=\Ric \, \omega - dd^c \log g$ which yields a $(1,1)$ current with potentials depending only on $T$ (and not on $\omega$ or $\varphi$). In particular, the construction can be globalized to positive $(1,1)$ currents with local potentials on a normal complex space with $\Q$-Gorenstein singularities.\\

    Recall that $(X,x)$ has log terminal singularities if and only if $\mu_X$ has finite mass, i.e. $f\in L^1$. It is standard to see that the latter condition is actually equivalent to having $f\in L^p$ for some $p>1$, cf e.g. \cite[Lemma~6.4]{EGZ09}. 
      
  The following result shows that the measure $\mu_X$ has a singular density with respect to a smooth volume form unless $(X,x)$ is smooth, compare \cite{LiZ23}. 

 \begin{lem}
 \label{smooth germ}
 Let $(X,x)$ be a germ of normal complex space of dimension $n$ such that $mK_X$ is Cartier and let $\omega$ be a smooth K\"ahler metric. Let $f$ be the density of $\mu_X$ with respect to $\omega^n$, i.e. $\mu_X=f\omega^n$. We have
 \[f\in L^\infty \quad \Longleftrightarrow \quad (X,x) \,\, \mbox{is smooth}. \]
 Moreover, the latter condition is equivalent to the existence of $k\in \R_+$ such that 
 \[\Ric \, \omega \ge - k \omega.\]
 \end{lem}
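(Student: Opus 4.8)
The plan is to prove the two equivalences by a single chain. First I would establish that $-\log f$ is quasi-psh, which is the key structural fact behind the whole lemma. Choose a local embedding $(X,x)\hookrightarrow \C^N$ with $mK_X$ trivialized by a section $\sigma$ over $X_{\rm reg}$; after possibly shrinking, $\sigma$ is a meromorphic section of $m K_{\C^N}$ restricted to $X$ times the appropriate Jacobian-type factor. Concretely, if $X$ is cut out in $\C^N$ and $dz_I$ denotes a holomorphic $n$-form coming from a choice of $n$ coordinates, then $\sigma$ is $h\,(dz_I)^{\otimes m}$ for some meromorphic $h$, and $\mu_{X,\sigma}=|h|^{2/m}\,i^{n^2}dz_I\wedge d\bar z_I$ on the locus where $z_I$ is a local coordinate. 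Comparing with $\omega^n$ (the restriction of the Euclidean form), one gets $f=|h|^{2/m}\cdot(\text{smooth positive})$ on $X_{\rm reg}$, hence $-\log f = -\tfrac{2}{m}\log|h| + (\text{smooth})$, and $-\tfrac{2}{m}\log|h|$ is psh minus psh, i.e. quasi-psh, on $X_{\rm reg}$. Since $X$ is normal hence locally irreducible, and $-\log f$ is bounded above away from the divisors of $h$ and is locally integrable (this is exactly the $f\in L^1_{\rm loc}$ fact, valid because $mK_X$ is Cartier, independently of log terminality), its upper semicontinuous extension across $X_{\rm sing}$ is quasi-psh on $X$ by Demailly's theorem quoted above. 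This gives $\Ric\,\omega = dd^c\log f = -dd^c(-\log f)$ is a well-defined $(1,1)$-current with quasi-psh local potential; in particular there is $k\in\R_+$ with $\Ric\,\omega\ge -k\omega$ near $x$ \emph{if and only if} the quasi-psh function $-\log f$ can be chosen $k\omega$-psh, which — since $-\log f$ is already locally bounded above — is automatic after shrinking unless $-\log f$ takes the value $+\infty$, i.e. unless $f$ vanishes somewhere on $X_{\rm reg}$. This observation already links the two conditions on the right: $\Ric\,\omega\ge -k\omega$ for some $k$ is equivalent to $\log f$ being locally bounded below, which (together with the upper bound, always available) is equivalent to $\log f\in L^\infty_{\rm loc}$, i.e. $f\in L^\infty$ \emph{and} $f$ bounded away from $0$; and $f$ bounded away from $0$ is automatic from $f=|h|^{2/m}\cdot(\text{smooth}>0)$ once $f\in L^\infty$, since $h$ is holomorphic there. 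So the real content is the first equivalence $f\in L^\infty\iff (X,x)$ smooth.

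For the implication $(X,x)$ smooth $\Rightarrow f\in L^\infty$: if $X$ is smooth at $x$ then $K_X$ itself is a line bundle, $\sigma$ (or $\sigma^{1/m}$) is a nowhere-vanishing holomorphic $n$-form near $x$, and $\mu_X$ is a smooth positive volume form, so $f$ is smooth and in particular bounded. The interesting direction is $f\in L^\infty\Rightarrow (X,x)$ smooth. Here I would argue contrapositively: suppose $X$ is singular at $x$. Using again $-\log f = -\tfrac2m\log|h| + (\text{smooth})$ on $X_{\rm reg}$, $f\in L^\infty$ forces $|h|$ to be bounded below near $x$, hence $h$ is a nowhere-vanishing holomorphic function on $X_{\rm reg}$ in a neighborhood of $x$; by normality it extends to a nowhere-vanishing holomorphic function on the whole germ, so $\mu_X$ is comparable to the smooth volume form $i^{n^2}dz_I\wedge d\bar z_I$ on each chart where $z_I$ is a coordinate. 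But a $\Q$-Gorenstein germ whose canonical measure is (locally, near $x$) mutually bounded with a smooth volume form is smooth: one way to see this cleanly is via the discrepancy/resolution description. Take a log resolution $\pi:\wh X\to X$; then $\pi^*\mu_X$ picks up a factor $\prod|z_{E_i}|^{2 a_i/?}$—more precisely, writing $K_{\wh X/X}=\sum a_i E_i$ with $a_i\in\tfrac1m\Z$, one has $\pi^*\mu_X \sim \prod|s_{E_i}|^{2a_i}\,dV_{\wh X}$ (with $|s_{E_i}|$ a local defining function of $E_i$ in suitable charts), and this is mutually bounded with a smooth volume form near $x$ only if every $a_i$ with $E_i$ $\pi$-exceptional over $x$ satisfies $a_i\le 0$ — but all exceptional $a_i$ are $\ge -1+\varepsilon$... and for log terminal singularities that are not smooth there is always some exceptional divisor with $a_i<0$ (equivalently $f\notin L^\infty$). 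Cleaner: invoke that the germ is smooth iff it is terminal with discrepancy profile of a smooth point, i.e. iff $a_i\ge 0$ for all exceptional $E_i$ and moreover $\mu_X$ has no zeros/poles; alternatively appeal directly to the well-known fact (e.g. as in \cite{EGZ09}) that $f$ is bounded below away from zero iff the germ has at worst \emph{canonical} singularities \emph{and} $K_X$ is Cartier with $\sigma$ non-vanishing, which over a normal germ with $mK_X$ trivial pins down smoothness. I would present this step using the resolution/discrepancy computation, as it is self-contained.

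The main obstacle I expect is making the step "$\mu_X$ mutually bounded with a smooth volume form $\Rightarrow$ $X$ smooth" fully rigorous without circularity, i.e. cleanly extracting from $h$ nowhere-vanishing that the germ carries a global non-vanishing $n$-form and hence is smooth. The subtle point is that a nowhere-vanishing section of $mK_X$ over the smooth locus of a \emph{singular} normal germ need not immediately give a trivialization of $K_X$ — one must rule out that $X$ is, say, a quotient singularity where $K_X$ is $\Q$-trivial but $X$ is not smooth. This is precisely where log terminality is \emph{not} needed but the structure is: the resolution computation shows $\log f$ unbounded below along any exceptional divisor of positive discrepancy, and a normal $\Q$-Gorenstein germ with no exceptional divisor of positive \emph{or} negative discrepancy over it and trivial $K_X$ is smooth. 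I would therefore organize the proof as: (i) the quasi-psh structure of $-\log f$; (ii) deduce the $\Ric\,\omega\ge -k\omega$ equivalence with $\log f\in L^\infty$; (iii) the resolution computation identifying $\log f\in L^\infty$ with vanishing of all exceptional discrepancies, hence with smoothness; concluding all stated equivalences.
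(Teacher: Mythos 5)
Your first paragraph correctly identifies the key structural fact ($-\log f$ is quasi-psh) and your framework for the second equivalence is in the right spirit, but there is a sign error that is not merely cosmetic. We have $\Ric\,\omega = dd^c\log f$, so $\Ric\,\omega \ge -k\omega$ is equivalent to $\log f$ (not $-\log f$) being $k\omega$-psh. Since $-\log f$ is always quasi-psh, you concluded the Ricci lower bound is ``automatic,'' which contradicts the very statement being proved. With the correct sign, the lower Ricci bound forces $\log f$ to be quasi-psh in addition to $-\log f$ being quasi-psh, hence $\log f$ is locally bounded — that is the right chain, but it is not the one you wrote. The paper instead argues directly: if $X$ is singular at $x$, the psh function $\psi = \log\sum_I|\zeta_I|^2$ satisfies $\psi(x)=-\infty$, and a lower Ricci bound would make $A\|z\|^2-\psi$ psh, hence bounded above near $x$, which is absurd.

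The more serious gap is in the implication $f\in L^\infty\Rightarrow (X,x)$ smooth, where you propose a log resolution and a discrepancy computation. This does not work as stated, because the transformation law for $\omega^n$ under the resolution $\pi$ is not governed by discrepancies at all: $\pi^*\omega^n$ vanishes along the exceptional locus to an order controlled by the Jacobian of the resolution (which is a separate piece of data tied to the embedding), and this competes with the discrepancy factor $\prod|s_{E_i}|^{2a_i}$ coming from $\pi^*\mu_X$. The concrete consequence is that your claimed dichotomy — $f\in L^\infty$ if and only if all exceptional discrepancies vanish, and a germ with all discrepancies zero is smooth — is false. Take the $A_1$ surface singularity $(xy=z^2)\subset\C^3$, which is Gorenstein with a crepant resolution (the single exceptional $(-2)$-curve has discrepancy $0$). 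Here $K_X\cong\mathcal O_X$, yet a direct computation on the double cover $\phi:\C^2\to X$, $\phi(u,v)=(u^2,v^2,uv)$, gives $\phi^*\omega^n\sim(|u|^2+|v|^2)^2\,dV_{\C^2}$ while $\phi^*\mu_X\sim dV_{\C^2}$, so $f\sim(|u|^2+|v|^2)^{-2}$ is unbounded. This is consistent with the lemma but falsifies the discrepancy criterion you invoke. The same example also undercuts your subsidiary claim that $mK_X$ trivial plus some discrepancy condition ``pins down smoothness.''

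The actual argument in the paper is elementary and bypasses resolutions entirely. Choose the embedding so that every coordinate projection $p_I:X\to\C^n$ is finite; then $dz_I|_{X_{\rm reg}}=\zeta_I\,\sigma$ with $\zeta_I$ holomorphic (by normality), and the explicit formula $f=(e^G\sum_I|\zeta_I|^2)^{-1}$ shows both that $-\log f$ is quasi-psh and that $f\in L^\infty$ forces some $\zeta_I(x)\neq 0$. But then $dz_I$ trivializes $K_{X_{\rm reg}}$ near $x$, so the finite map $p_I$ is \'etale on $X_{\rm reg}$, and purity of the branch locus promotes this to \'etaleness on all of $X$, hence $(X,x)$ is smooth. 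This is exactly the ingredient missing from your sketch: the obstruction you flagged (a non-vanishing section of $mK_X$ over the smooth locus need not trivialize $K_X$ at the singular point) is resolved not by a resolution computation but by purity of the branch locus applied to a well-chosen finite projection.
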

 
 \begin{proof}
 In order to lighten the notation, we will assume that $m=1$. The proof of the general case is essentially identical. One direction of the lemma is obvious, so we need to show that boundedness of the density implies smoothness of the germ. 
 
 We can assume that we have an embedding $X\hookrightarrow \mathbb C^N$ such that for all $n$-tuple $I\subset \{1, \ldots, N\}$, the linear projections $p_I:X\to \mathbb C^n$ are finite maps. It is not difficult to check that there exists a smooth function $G$ on $X$ such that $\omega^n=e^G\omega_{\mathbb  C^N}^n|_X$. 
 
 Given any $I$, there exists a holomorphic function $\zeta_I$ on $X_{\rm reg}$ such that $dz_I|_{X_{\rm reg}} = \zeta_I \sigma$. By normality of $X$, $\zeta_I$ extends to a holomorphic function on $X$. This shows that $f=(e^G\sum_I |\zeta_I|^2)^{-1}$ up to some positive constant. In particular, $-\log f$ is quasi-psh.  
 
 Next, if $f$ is bounded on $X$, then there exists $I$ such that $\zeta_I(x)\neq 0$. In particular, $\zeta_I$ is non-vanishing on the germ $(X,x)$, and $dz_I|_{X_{\rm reg}}$ is a trivialization of $K_{X_{\rm reg}}$. In other words, the map $p_I: X\to \mathbb C^n$ is étale on $X_{\rm reg}$, hence everywhere by purity of the branch locus. This shows that $(X,x)$ is smooth.  
 
 As for the last claim, assume that $(X,x)$ is singular. Then $\psi:=\log (\sum_I |\zeta_I|^2)$ is psh with analytic singularities and satisfies $\psi(x)=-\infty$ and by definition, we have $\Ric \, \om = -dd^c(\psi+G)$. If the Ricci curvature of $\omega$ were bounded from below, there would exist $A>0$ such that $dd^c(A\|z\|^2-\psi)\ge0$. In particular, $A\|z\|^2-\psi$ would be psh on $X_{\rm reg}$, hence bounded above near $x$, which is absurd. 
 \end{proof}
 
 \begin{rem}
 The Ricci curvature of $\omega$ is, however, always bounded {\it from above}. This follows from the fact that curvature decreases when passing to holomorphic submanifolds. 
 \end{rem}

 \begin{rem}
The first statement in Lemma~\ref{smooth germ} extends immediately to the setting of log pairs $(X,D)$. Indeed, if $D\neq \emptyset$, let us write $D=\sum a_iD_i$ with $a_i\neq 0$ and work at a general point $y\in D_i$ where both $X$ and $D$ are smooth so that $D_i=(g_i=0)$ for some holomorphic function $g_i$ defined near $y$. In a neighborhood of $y$, the density of $\mu_{(X, D)}$ looks like $|g_i|^{-2a_i}$ hence it is not bounded. 
 \end{rem}
 
    \begin{cor}
   \label{domination}
  Let $(X,\omega)$ be a Stein or compact Kähler space with log terminal singularities admitting a Kähler-Einstein metric $\omega_{\rm KE}$ in the sense of \cite{GGZ20} or  \cite{EGZ09} respectively. Assume that there exists $C>0$ such that 
  \[\om_{\rm KE} \le C\omega \quad \mbox{on } X. \]
  Then $X$ is smooth. 
   \end{cor}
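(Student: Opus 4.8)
The plan is to combine the Kähler-Einstein equation with Lemma~\ref{smooth germ}: the assumption $\om_{\rm KE}\le C\om$ forces the density of $\om_{\rm KE}^n$ relative to $\om^n$ to be bounded, and through the Monge-Ampère equation this will transfer to a bound on the density $f$ of the canonical measure $\mu_X$ relative to $\om^n$, whence smoothness of $X$. Concretely, write $\om_{\rm KE}=\om+dd^c\f_{\rm KE}$ with $\f_{\rm KE}$ bounded (this is part of the definition of a singular Kähler-Einstein metric in the sense of \cite{GGZ20} or \cite{EGZ09}), and recall that $\f_{\rm KE}$ solves the complex Monge-Ampère equation $\om_{\rm KE}^n=e^{-\lambda\f_{\rm KE}}\mu_X$ on $X_{\rm reg}$, where $\mu_X$ is the canonical measure of the $\Q$-Gorenstein germ (resp.\ its global analogue). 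Writing $\mu_X=f\om^n$ as in \textsection~\ref{sec can measure}, the equation becomes $\om_{\rm KE}^n=e^{-\lambda\f_{\rm KE}}f\,\om^n$ on $X_{\rm reg}$.

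First I would use the hypothesis $0\le\om_{\rm KE}\le C\om$ on $X$ to deduce that on $X_{\rm reg}$ one has $\om_{\rm KE}^n\le C^n\om^n$ pointwise (comparing the top exterior powers of two positive $(1,1)$-forms, one dominated by a constant times the other). Combined with the equation this gives
\[
e^{-\lambda\f_{\rm KE}}f\le C^n \quad\text{on }X_{\rm reg},
\]
and since $\f_{\rm KE}$ is bounded, $e^{-\lambda\f_{\rm KE}}$ is bounded below by a positive constant; therefore $f$ is bounded on $X_{\rm reg}$, hence (being the density of a measure that extends trivially across $X_{\rm sing}$, which has measure zero) $f\in L^\infty(X)$. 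By Lemma~\ref{smooth germ} applied to the germ at each point of $X$ (in the Stein case directly, in the compact case locally), this forces $X$ to be smooth.

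The main point to be careful about — rather than a deep obstacle — is the precise meaning of the inequality $\om_{\rm KE}\le C\om$ and the equation on the singular space: one must make sure the comparison of volume forms and the Monge-Ampère equation are being used on $X_{\rm reg}$, where everything is a genuine smooth statement, and then that the resulting $L^\infty$ bound on $f$ over $X_{\rm reg}$ is enough to invoke Lemma~\ref{smooth germ}, which is precisely how that lemma is phrased. One should also record that in the compact Kähler case the normalization constant $\lambda$ and the choice of $\mu_X$ (a global volume form with the right local behaviour, of the type in \eqref{eq KE}) are exactly such that the local density $f$ near each singular point agrees, up to a smooth bounded factor, with the density of the canonical measure of the germ, so Lemma~\ref{smooth germ} applies verbatim. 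No new estimates are needed; the corollary is a direct consequence of Lemma~\ref{smooth germ} once the equation is brought into the game.
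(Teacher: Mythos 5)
Your proof is correct and follows essentially the same route as the paper's: combine the domination $\om_{\rm KE}\le C\om$ with the Monge--Ampère equation to bound the density of $\mu_X$ with respect to $\om^n$, then invoke Lemma~\ref{smooth germ}. The only cosmetic difference is that the paper works locally near a chosen point $x$, writing $\omega_{\rm KE}|_U=dd^c\varphi$ for a bounded local potential $\varphi$ and absorbing the discrepancy between the global measure and the local trivialization $\mu_U$ into a pluriharmonic factor $e^h$; you work with the global potential $\f_{\rm KE}$ and note at the end that the local and global densities differ by a smooth bounded factor, which amounts to the same thing.
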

  
 \begin{proof}
 Pick $x\in X$ and choose a neighborhood $U$ of $x$ bearing a trivialization $\sigma$ of $mK_{U_{\rm reg}}$ and write $\mu_U:=i^{n^2} (\sigma \wedge \bar \sigma)^{\frac 1m}$. Next, one can ensure that there exists $\varphi \in \mathrm{PSH}(U)\cap L^{\infty}(U)$ such that  $\omega_{\rm KE}|_{U}=dd^c \varphi$. There exist $\lambda \in \R$ and a pluriharmonic (hence smooth) function $h$ on $U$ such that 
 \[\omega_{\rm KE}^n=e^{\lambda \varphi+h} \mu_U\quad \mbox{  on }  U.\] 
 In particular, the domination $\om_{\rm KE} \le C\omega$ implies that the density of $\mu_U$ with respect to $\omega^n$ is bounded. The conclusion now follows from Lemma~\ref{smooth germ}.
 \end{proof}
  
   \section{Global smoothing} \label{sec:global}
  
  In this section we show that singular K\"ahler-Einstein metrics are K\"ahler currents
  when the variety $X$ admits a global smoothing.

 \subsection{K\"ahler-Einstein currents}

 Let $X$ be a K\"ahler normal compact space. 
  The study of complex Monge-Amp\`ere equations in this context has been initiated
 in \cite{EGZ09}, providing a way of
 constructing singular K\"ahler-Einstein metrics and extending Yau's fundamental solution to the Calabi conjecture \cite{Yau78}. More precisely, it is proven there that given a Kähler metric $\omega$ on $X$, a non-negative number $\lambda \in \{0,1\}$ and a non-negative function $f\in L^p(X)$ for some $p>1$ (satisfying $\int_X f\om^n = \int_X \om^n$ if $\lambda=0$), then the equation
 \begin{equation}
 \label{MA sing}
 (\omega+dd^c \varphi)^n=fe^{\lambda \varphi} \cdot \omega^n
 \end{equation}
 has a unique solution $\varphi \in \mathrm{PSH}(X,\omega) \cap L^{\infty}(X)$ (with the additional normalization $\sup_X \varphi =0$ if $\lambda=0$).\\
 
 Let us now explain the relation between the equation \eqref{MA sing} above and the existence of singular Kähler-Einstein metrics.
 
  We choose a pair $(X,D)$ consisting of an $n$-dimensional compact K\"ahler variety $X$ and a divisor $D=\sum a_i D_i$ with $a_i \in [0,1] \cap \mathbb Q$. We assume that there exists an integer $m\ge 1$ such that $m(K_X+D)$ is a line bundle. More precisely, we mean by this that the reflexive hull of the coherent sheaf $\left(\det(\Omega_X^1)\otimes \mathcal O_X(D)\right)^{\otimes m}$ is locally free.

 Given a hermitian metric $h$ on $K_X+D$ and the singular metric $e^{-\phi_D}$ on $X_{\rm reg}$ 
 (unique up to a positive multiple), one can construct a measure $\mu_{(X,D),h}$ on $X$ as follows. If $U$ is any open set where $m(K_X+D)$ admits a trivialization $\sigma$ on $U_{\rm reg}$, then the expression $$\frac{(\sigma \wedge \bar \sigma)^{\frac 1m}}{|\sigma|_{h^{\otimes m}}^{\frac 2m}}e^{-\phi_D}$$
defines a measure on $U_{\rm reg}$ which is independent of $m$ as well as the choice of $\sigma$ and can thus be patched to a measure on $X_{\rm reg}$. Its extension by $0$ on $X_{\rm sing}$ is by definition $\mu_{(X,D),h}$. We recall the following properties satisfied by the measure $\mu:=\mu_{(X,D),h}$, cf \cite[Lemma~6.4]{EGZ09}.
\begin{itemize}
\item The Ricci curvature of $\mu$ on $X_{\rm reg}$ is equal to $-i\Theta(h)+[D]$.
\item The mass $\int_X d\mu$ is finite if and only if $(X,D)$ has klt singularities. 
\item If $\mu$ has finite mass, then the density $f$ of $\mu$ wrt $\om^n$ (i.e. $\mu=f\cdot \om^n$) satisfies $f\in L^p(X)$ for some $p>1$. 
\end{itemize}

\medskip
From now on, we work in the following 

\begin{set}
\label{set pairs}
Let $(X,D)$ be a pair where $X$ is a compact normal Kähler space and $D$ is an effective $\mathbb Q$-divisor. Assume that $(X,D)$ has klt singularities, pick a Kähler metric $\omega$ and a hermitian metric $h$ on $K_X+D$, normalized so that $\int_X d\mu_{(X,D),h}=\int_X\om^n$. We assume either
\begin{itemize}
\item[$\bullet$]  $K_X+D$ is ample  and $\omega = i\Theta(h)$; \, or
\item[$\bullet$] $K_X+D \equiv 0$ and $h$ satisfies $i\Theta(h)=0$; \, or else
\item[$\bullet$]  $K_X+D$ is anti-ample  and $\omega = -i\Theta(h)$.
\end{itemize}
\end{set}

\begin{defi}
\label{def KE}
 In the Setting~\ref{set pairs} above, a Kähler-Einstein metric  $\om_{\rm KE}:=\omega+dd^c \varphi_{\rm KE}$ is a solution of the Monge-Ampère equation
\begin{equation}
\label{KE sing}
 (\omega+dd^c \f_{\rm KE})^n =e^{\lambda \f_{\rm KE}} \mu_{(X,D),h}
 \end{equation}
where $\lambda = 1,0$ or $-1$ according to whether we are in the first, second of third case.
 It  satisfies
 \begin{equation}
 \label{KE sing 2}
 {\rm Ric}(\omega_{\rm KE})=-\lambda \omega_{\rm KE}+[D]
 \end{equation}  in the weak sense.
 \end{defi}
 
By the results \cite{EGZ09} recalled above, \eqref{KE sing} admits a unique solution $\om_{\rm KE}$ whenever $\lambda \in \{0,1\}$. Its potential $\varphi_{\rm KE}$ is globally bounded on $X$ and $\om_{\rm KE}$ is a honest Kähler-Einstein metric on $X_{\rm reg} \setminus \mathrm{Supp}(D)$ and it has cone singularities along $D$ generically \cite{G2}. 
For $\lambda=-1$ we refer the reader to \cite{BBEGZ,Bouck18}.\\

 Kähler-Einstein theory in positive curvature is notoriously more complicated than in non-positive curvature. For that reason, we will make additional assumptions when working in the log Fano case and introduce the following 
 
 \begin{set}
 \label{set KE}
 In the Setup~\ref{set pairs} and in the case where $-(K_X+D)$ is ample, we assume additionally that
 \begin{enumerate}
 \item[$\bullet$] There exists a Kähler-Einstein metric $\om_{\rm KE}$,
 \item[$\bullet$] $\mathrm{Aut}^\circ(X,D)=0$. 
 \end{enumerate} 
  \end{set} 
The assumption on the automorphism group is to ensure uniqueness of the KE metric. Actually, assuming existence of a KE metric, its uniqueness is equivalent to the discreteness of $\mathrm{Aut}(X,D)$. 

  In summary, in Setup~\ref{set KE} above, there exists in each three cases a unique Kähler-Einstein metric $\om_{\rm KE}\in [\om]$. This is by \cite{EGZ09} if $K_X+D \ge0$ and by \cite[Theorem~5.1]{BBEGZ} if $K_X+D<0$.\\

 \subsection{K\"ahler currents}
 Let us start by recalling the following terminology. 
 
 \begin{defi}
 Let $(X,\omega)$ be a compact Kähler space, and let $T$ be a closed, positive $(1,1)$-current. We say that $T$ is a {\it Kähler current} if there exists $\ep>0$ such that the inequality $T\ge \ep \omega$ holds globally on $X$ in the sense of currents.  
 \end{defi}
 
 In our main case of interest, $T$ will have local potentials, i.e. there exist a finite open covering $X=\cup_{\alpha \in I} U_\alpha$ and functions $u_\alpha \in \mathrm{PSH}(U_\alpha)$ such that $T|_{U_\alpha}=dd^c u_\alpha$. Up to refining the cover, one can assume that $\omega|_{U_\alpha}=dd^c \varphi_\alpha$ for some strictly psh functions $\varphi_\alpha$ on $U_\alpha$. Then $T$ is a Kähler current if and only if there exists $\ep>0$ such that for all $\alpha \in I$, one has $u_\alpha-\ep \varphi_\alpha \in \mathrm{PSH}(U_\alpha)$. \\
 
 In the following, we will repeatedly use the following classical result: 
 
 \begin{lem}
 \label{extension}
 Let $X$ be a normal complex space and let $T$ be a closed $(1,1)$- current on $X$ admitting locally bounded potentials. Assume that there exists a closed analytic set $Z\subsetneq X$ such that $T|_{X\setminus Z}\ge 0$. Then, $T\ge 0$ everywhere on $X$. 
 \end{lem}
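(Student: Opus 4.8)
The plan is to reduce the statement to a local problem on a small open set of $X$, and there to use that psh functions extend across thin analytic sets together with the characterization of psh functions via their behavior on analytic discs (the Fornaess–Narasimhan criterion recalled above).

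\emph{Step 1: Localize.} The statement is local, so I fix a point $x\in Z$ and a connected open neighborhood $U\ni x$ on which $T|_U=dd^c u$ for some bounded function $u$ that is psh on $U\setminus Z$ (this is the hypothesis $T|_{X\setminus Z}\ge 0$). Shrinking $U$, I may assume $U$ embeds as an analytic subset of a ball in $\mathbb C^N$, that $U$ is irreducible (using normality of $X$, hence local irreducibility), and that $u$ is bounded on $U$. The goal on $U$ is to show that $u$ agrees off a pluripolar/negligible set with a genuine psh function on $U$, so that $dd^c u\ge 0$ as a current on $U$.

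\emph{Step 2: Extend the psh function across $Z$.} Since $u$ is bounded above and psh on the complex manifold $U_{\rm reg}\setminus Z$, it is weakly psh there, and by the Grauert–Remmert/Bedford–Taylor-type removable-singularity theorem for bounded psh functions across analytic subsets, the function $u$ (or rather its usc regularization $u^*$ as in \eqref{eq:extension}) is psh on $U_{\rm reg}$. Here I use that $U_{\rm reg}\setminus Z$ is dense in $U_{\rm reg}$ and that $Z\cap U_{\rm reg}$ is a proper analytic subset. Then $u^*$ is weakly psh on $U$, and by Demailly's theorem (stated earlier in the excerpt, using local irreducibility) $u^*$ is psh on $U$. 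Since $u$ was already psh — in particular usc — on $U\setminus Z$, we have $u^*=u$ on $U\setminus Z$; as $Z$ is nowhere dense and $u$ is bounded, the currents $dd^c u$ and $dd^c u^*$ agree on $U$. Hence $T|_U=dd^c u^*\ge 0$. Letting $x$ vary over $Z$ (and noting $T\ge 0$ is already known on $X\setminus Z$) gives $T\ge 0$ on all of $X$.

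\emph{Main obstacle.} The only real subtlety is justifying the removable-singularity step on the possibly singular space $X$: one must know that a bounded function that is psh on $U_{\rm reg}\setminus Z$ extends (after usc regularization) to a psh function on $U_{\rm reg}$, and then across $X_{\rm sing}$. On the smooth part $U_{\rm reg}$ this is the classical Bedford–Taylor removable singularity theorem for bounded psh functions across analytic sets; the passage across $X_{\rm sing}$ is exactly handled by the Fornaess–Narasimhan criterion and Demailly's theorem quoted above, which require local irreducibility — guaranteed here by the normality hypothesis on $X$. No mass can concentrate on $Z$ because $u$ is bounded, so $dd^cu^*$ puts no mass on the nowhere-dense set $Z$ and the two currents genuinely coincide.
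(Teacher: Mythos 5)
Your proof is correct and follows essentially the same route as the paper's own argument: choose a bounded local potential that is psh off $Z$, extend it across $Z$ by the removable-singularity theorem for bounded psh functions (using normality, hence local irreducibility, to handle $X_{\rm sing}$ via Demailly's theorem), and observe that the extended potential agrees with the original almost everywhere so the currents coincide. The paper additionally mentions a one-line alternative via the Chern--Levine--Nirenberg inequality (no mass on pluripolar sets), but your more detailed extension argument is exactly the route the paper elaborates.
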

 
 \begin{proof}
 The claim is an immediate application of Chern-Levine-Nirenberg inequality that ensures that $T$ puts no mass on pluripolar sets. Alternatively, fix an open neighborhood $U$ of a point $x\in Z$ such that $T|_U=dd^c\varphi$. One can choose $\varphi$ such that $\varphi|_{U\setminus Z}$ is psh and locally bounded near $Z$, hence it extends to a psh function $\psi$ on $U$. Since $\varphi=\psi$ almost everywhere on $U$, we have $T|_U=dd^c\psi \ge 0$.
 \end{proof}
 
 The purpose of this   section is to study when $\omega_{\rm KE}$ is a K\"ahler current, by using a smoothability assumption. Our results are inspired by a result due to Ruan and Zhang \cite[Lemma~5.2]{RZ0}, and the main tool throughout the paper will be Chern-Lu inequality, which we recall below as it can be found in e.g. \cite[Proposition~7.2]{Rub14} 
 
 \begin{prop}[Chern-Lu inequality]
 \label{Chern Lu} Let $X$ be a complex manifold endowed with two Kähler metrics $\om, \wh \om$. Assume that there are constants $C_1, C_2, C_3 \in \mathbb R$ such that 
 \[\Ric \, \wh \om \ge -C_1 \wh \om-C_2 \om, \quad \mbox{and} \quad \mathrm{Bisec}_\omega \le C_3.\] 
 Then, we have the following inequality on $X$
 \[\Delta_{\wh \om} \log \mathrm{tr}_{\wh \om}\om \ge -C_1- (C_2+2C_3) \, \mathrm{tr}_{\wh \om}\om.\]

 \end{prop}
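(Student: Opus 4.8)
The statement is pointwise, so the plan is to fix $p\in X$ and estimate $\Delta_{\wh\om}\log\tr_{\wh\om}\om$ at $p$ by the classical Bochner computation underlying Yau's Schwarz lemma. First I would choose holomorphic coordinates $(z_1,\dots,z_n)$ centred at $p$ that are normal for $\wh\om$ at $p$ (so that $\wh g_{i\bar j}(p)=\delta_{ij}$ and $d\wh g_{i\bar j}(p)=0$) and that simultaneously diagonalize $\om$ at $p$, say $g_{i\bar j}(p)=\lambda_i\delta_{ij}$ with $\lambda_i>0$; such a frame exists because two Hermitian forms on $\C^n$ can be simultaneously diagonalized. Set $\Lambda:=\tr_{\wh\om}\om=\wh g^{i\bar j}g_{i\bar j}$, so that $\Lambda(p)=\sum_i\lambda_i$.

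Next, starting from the elementary identity $\Delta_{\wh\om}\log\Lambda=\Lambda^{-1}\Delta_{\wh\om}\Lambda-\Lambda^{-2}|\partial\Lambda|^2_{\wh\om}$, I would expand $\Delta_{\wh\om}\Lambda=\wh g^{k\bar l}\partial_k\partial_{\bar l}(\wh g^{i\bar j}g_{i\bar j})$ at $p$. Because $d\wh g(p)=0$, the mixed terms drop and one is left with two contributions. In the first, both derivatives fall on $\wh g^{i\bar j}$ and produce the curvature tensor of $\wh\om$; after contracting with $g$ this becomes $\sum_i\lambda_i\,\widehat{\Ric}_{i\bar i}$. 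In the second, both derivatives fall on $g_{i\bar j}$, and the K\"ahler identity $R^{\om}_{i\bar j k\bar l}=-\partial_k\partial_{\bar l}g_{i\bar j}+g^{p\bar q}\partial_k g_{i\bar q}\partial_{\bar l}g_{p\bar j}$ rewrites it as $-\sum_{i,k}R^{\om}_{i\bar i k\bar k}$ plus the nonnegative gradient term $G:=\sum_{i,k,p}\lambda_p^{-1}|\partial_k g_{i\bar p}|^2$.

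The remaining steps are purely algebraic. Using $\Ric\,\wh\om\ge -C_1\wh\om-C_2\om$ one gets $\widehat{\Ric}_{i\bar i}\ge -C_1-C_2\lambda_i$, hence the first contribution is $\ge -C_1\Lambda-C_2\sum_i\lambda_i^2\ge -C_1\Lambda-C_2\Lambda^2$. Using $\mathrm{Bisec}_{\om}\le C_3$, which gives $R^{\om}_{i\bar i k\bar k}\le C_3\lambda_i\lambda_k$, the second contribution (apart from $G$) is bounded below by a fixed multiple of $-C_3\Lambda^2$ depending only on the normalization of the holomorphic bisectional curvature; with the conventions of \cite{Rub14} it reads $\ge -2C_3\Lambda^2$. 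Finally, since $\partial_k\Lambda=\sum_i\partial_k g_{i\bar i}$ at $p$, Cauchy--Schwarz yields $|\partial\Lambda|^2_{\wh\om}\le \Lambda\sum_{i,k}\lambda_i^{-1}|\partial_k g_{i\bar i}|^2\le \Lambda\,G$, so the negative term $-\Lambda^{-2}|\partial\Lambda|^2_{\wh\om}$ is exactly absorbed by $\Lambda^{-1}G$. Collecting everything and dividing by $\Lambda$ gives $\Delta_{\wh\om}\log\tr_{\wh\om}\om\ge -C_1-(C_2+2C_3)\tr_{\wh\om}\om$ at $p$, as claimed.

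I do not expect a genuine obstacle here: the argument is entirely classical, and the only delicate point is the bookkeeping of sign and normalization conventions (for $dd^c$, for $\Ric$, and for the bisectional curvature), which is precisely why the paper simply quotes the statement from \cite[Proposition~7.2]{Rub14}. Alternatively one can deduce it verbatim from that reference by specializing its Chern--Lu inequality for a holomorphic map $\phi\colon(X,\wh\om)\to(Y,\om)$ to $Y=X$ and $\phi=\id$, for which $|\partial\phi|^2_{\wh\om,\om}=\tr_{\wh\om}\om$.
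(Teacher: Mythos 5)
The paper does not actually prove Proposition~\ref{Chern Lu}: it simply quotes it from \cite[Proposition~7.2]{Rub14}, so there is no in-paper argument to compare against. Your blind proof is the standard Yau--Schwarz/Chern--Lu Bochner computation and it is correct in substance: simultaneously diagonalize $\om$ in $\wh\om$-normal coordinates; split $\Delta_{\wh\om}\Lambda$ into the $\wh\om$-curvature piece $\sum_i\lambda_i\widehat{\Ric}_{i\bar i}$, the $\om$-curvature piece $-\sum_{i,k}R^{\om}_{i\bar i k\bar k}$, and the nonnegative gradient piece $G$; then use Cauchy--Schwarz to absorb $\Lambda^{-2}|\partial\Lambda|^2_{\wh\om}$ into $\Lambda^{-1}G$. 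Two small remarks. First, with the natural normalization in which $\mathrm{Bisec}_\om\le C_3$ reads $R^{\om}_{i\bar i k\bar k}\le C_3\lambda_i\lambda_k$, your computation in fact yields the stronger bound $\Delta_{\wh\om}\log\Lambda\ge -C_1-(C_2+C_3)\Lambda$; the factor $2$ in the statement is therefore harmless slack (or a byproduct of the precise bisectional-curvature normalization in \cite{Rub14}), and you are right to treat it as a convention matter rather than something that needs to be derived. Second, your inequality $-C_2\sum_i\lambda_i^2\ge -C_2\Lambda^2$ implicitly uses $C_2\ge 0$ (as does the analogous step in any standard proof), which is of course the only relevant regime; strictly speaking, as stated with $C_2\in\R$ arbitrary, one should replace $C_2$ by $\max(C_2,0)$. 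Your closing alternative, specializing Rubinstein's inequality for a holomorphic map $\phi\colon(X,\wh\om)\to(Y,\om)$ to $Y=X$ and $\phi=\id$, is exactly how the paper intends the proposition to be read, so both your computation and your citation route are valid.
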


 \noindent
 Next, we introduce the following definition


\begin{defi}
Let $(X,D)$ be a klt pair with $X$ compact and let $\omega$ be a Kähler metric. We say that $(X,D,[\omega])$ admits a $\mathbb Q$-Gorenstein smoothing if there exists a triplet $(\mathcal X, \mathcal D, [\omega_{\mathcal X}])$ consisting of a normal complex space $\mathcal X$, an effective $\mathbb Q$-divisor $\mathcal D$,  and a smooth, $(1,1)$-form $\omega_{\mathcal X}$ on $\mathcal X$ admitting a proper, surjective holomorphic map $\pi:\mathcal X \to \mathbb D$ satisfying:
\begin{enumerate}
\item $K_{\mathcal X/\mathbb D}+\mathcal D$ is a $\mathbb Q$-line bundle. 
\item Every irreducible component of $\mathcal D$ surjects onto $\mathbb D$. 
\item $(\mathcal X, \mathcal D)|_{\pi^{-1}(0)} \simeq (X,D)$. 
\item For $t\neq 0$, $(X_t, D_t)=(\mathcal X, \mathcal D)|_{\pi^{-1}(t)}$ is log smooth.
\item For $t\in \mathbb D$,  $\omega_t:=\omega_{\mathcal X}|_{X_t}$ is Kähler and $[\omega_0]=[\omega]$. 
\end{enumerate}
\end{defi}

 Let us make few remarks: 
 \begin{enumerate}[label=$\circ$]
 \item If $K_X+D$ is ample or anti-ample, then so is $K_{X_t}+D_t$ for $t$ small. Then, the last condition in the definition above is automatic if $\omega \in \pm c_1(K_X+D)$, by using an embedding of $\mathcal X$ into $\mathbb P^N\times \mathbb D$ via sections of $\pm m(K_{X_t}+D_t)$ for $m$ large. 
 \item If $K_X+D$ is numerically trivial, then so is $K_{X_t}+D_t$ for  $t$ small, cf  e.g. \cite[Lemma~2.12]{DG}. 
 \item The pair $(X_t, D_t)$ is automatically klt for any $t\in \mathbb D$.
 \end{enumerate}

\begin{thm}
\label{thm strict}
Let $(X,D,[\omega])$ as in Setup~\ref{set KE} and assume that $(X,D,[\omega])$ admits a $\mathbb Q$-Gorenstein smoothing. Then, the Kähler-Einstein metric $\omega_{\rm KE}$ is a Kähler current. That is, there exists $C>0$ such that 
$$\om_{\rm KE} \ge C^{-1} \om.$$
\end{thm}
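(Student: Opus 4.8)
The plan is to run the Ruan--Zhang strategy via the Chern--Lu inequality, now in the singular setting, exploiting the $\mathbb{Q}$-Gorenstein smoothing $\pi\colon(\mathcal X,\mathcal D)\to\mathbb D$. First I would fix the family of Kähler--Einstein potentials: on each fiber $(X_t,D_t)$ (log smooth for $t\neq 0$, and equal to $(X,D)$ for $t=0$), solve the Monge--Ampère equation \eqref{KE sing} to get $\varphi_t$ with $\omega_{t,\mathrm{KE}}=\omega_t+dd^c\varphi_t$, normalized appropriately (with $\sup_{X_t}\varphi_t=0$ when $\lambda=0$). The first key input is a \emph{uniform} $L^\infty$ bound $\|\varphi_t\|_{L^\infty(X_t)}\le C$, independent of $t\in\mathbb D$; this is exactly the kind of uniform a priori estimate for families of degenerate complex Monge--Ampère equations established in \cite{SSY,DGG}, and it relies on the fact that the densities $f_t$ of $\mu_{(X_t,D_t),h_t}$ with respect to $\omega_t^n$ are uniformly bounded in $L^p$ for some fixed $p>1$ (by the klt hypothesis, uniform in the family after possibly shrinking $\mathbb D$). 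In the log Fano case one additionally needs the existence of $\omega_{\rm KE}$ on the central fiber together with $\mathrm{Aut}^\circ(X,D)=0$, which is precisely what Setup~\ref{set KE} provides, to guarantee that the $\varphi_t$ do not drift to $-\infty$; this will be the conceptual heart of the positively curved case.

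Second, I would set up the Chern--Lu estimate on the \emph{smooth} locus of each nearby fiber. Write $\widehat\omega_t=\omega_{t,\mathrm{KE}}$ and let $\omega_t$ be the fixed background (restriction of $\omega_{\mathcal X}$); apply Proposition~\ref{Chern Lu} on $(X_t)_{\rm reg}\setminus\mathrm{Supp}(D_t)$, which for $t\neq 0$ is an honest manifold. The Ricci lower bound $\Ric\,\widehat\omega_t\ge -C_1\widehat\omega_t-C_2\omega_t$ comes from the Einstein equation \eqref{KE sing 2}: $\Ric\,\widehat\omega_t=-\lambda\widehat\omega_t+[D_t]\ge -\lambda\widehat\omega_t$ away from $D_t$, so one may take $C_1=\max(\lambda,0)$, $C_2=0$; the bisectional curvature bound $\mathrm{Bisec}_{\omega_t}\le C_3$ holds uniformly because $\omega_{\mathcal X}$ is a fixed smooth form on a relatively compact piece of $\mathcal X$. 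Then $\Delta_{\widehat\omega_t}\log\tr_{\widehat\omega_t}\omega_t\ge -C_1-2C_3\,\tr_{\widehat\omega_t}\omega_t$. Applying the maximum principle to $H_t:=\log\tr_{\widehat\omega_t}\omega_t - A\varphi_t$ for suitably large $A$ (so that the $-A\,dd^c\varphi_t$ term contributes $+A\tr_{\widehat\omega_t}\omega_t - An$ and absorbs the bad $\tr_{\widehat\omega_t}\omega_t$ term), and using $\widehat\omega_t^n = e^{\lambda\varphi_t}f_t\,\omega_t^n$ to control $\Delta_{\widehat\omega_t}\varphi_t$, yields $\tr_{\widehat\omega_t}\omega_t\le C'$ on $X_t$ for a constant independent of $t$, provided the maximum of $H_t$ is attained — which it is on the smooth compact fiber for $t\neq 0$. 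One must check that $H_t$ does not escape to $+\infty$ near $D_t$: this follows because $\varphi_t$ has at worst cone/log singularities there while $\tr_{\widehat\omega_t}\omega_t\to 0$ fast enough, the standard argument in the conical Kähler--Einstein literature.

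Third, I would pass to the limit $t\to 0$. The uniform bound $\widehat\omega_t\ge (C')^{-1}\omega_t$ on $X_t\setminus(X_t)_{\rm sing}$, combined with the uniform $L^\infty$ bound on $\varphi_t$, lets one extract a limit: the potentials $\varphi_t$ converge (in $L^1$, up to subsequence, and locally uniformly on $X_{\rm reg}\setminus\mathrm{Supp}(D)$ by stability of Monge--Ampère solutions, cf.\ \cite{EGZ09}) to the unique solution $\varphi_{\rm KE}$ on the central fiber, so $\omega_{t,\mathrm{KE}}\to\omega_{\rm KE}$ weakly and in $C^\infty_{\rm loc}$ on $X_{\rm reg}\setminus\mathrm{Supp}(D)$. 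The Chern--Lu bound then survives in the limit: $\omega_{\rm KE}\ge (C')^{-1}\omega$ on $X_{\rm reg}\setminus\mathrm{Supp}(D)$. Since $\omega_{\rm KE}$ has locally bounded potentials on $X$, Lemma~\ref{extension} applied to $T=\omega_{\rm KE}-(C')^{-1}\omega$ (which is $\ge 0$ off the analytic set $X_{\rm sing}\cup\mathrm{Supp}(D)$) gives $\omega_{\rm KE}\ge (C')^{-1}\omega$ globally on $X$, which is the claim with $C=C'$.

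The main obstacle is the first step: producing the \emph{uniform} $L^\infty$ bound $\|\varphi_t\|_{L^\infty(X_t)}\le C$ across the family, together with — in the Fano case — ruling out degeneration of the potentials. This is where one genuinely invokes \cite{SSY,DGG}, and it requires that the smoothing be compatible with the Kähler--Einstein data (the normalization $\int_{X_t}d\mu_{(X_t,D_t),h_t}=\int_{X_t}\omega_t^n$ varying continuously, the $L^p$ bound on the densities being uniform, and $[\omega_0]=[\omega]$). A secondary technical point is justifying the maximum principle argument uniformly near $\mathrm{Supp}(D_t)$ when $D\neq 0$, and checking that constants in the Chern--Lu step depend only on the fixed form $\omega_{\mathcal X}$ and on $\sup_t\|\varphi_t\|_{L^\infty}$ — but these are by now routine once the uniform potential bound is in hand.
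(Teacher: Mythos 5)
Your proposal follows essentially the same route as the paper: uniform $L^\infty$ potential bounds from \cite{SSY,DGG,PT23}, a Chern--Lu argument on the nearby fibers with the uniform upper bound on bisectional curvature coming from the ambient form $\omega_{\mathcal X}$, a maximum principle to convert the differential inequality into $\widehat\omega_t \geq C^{-1}\omega_t$, and finally passage to the limit $t\to 0$ together with Lemma~\ref{extension} to extend the inequality across $X_{\rm sing}\cup\mathrm{Supp}(D)$. The one place the implementations genuinely differ is the justification of the maximum principle near $\mathrm{Supp}(D_t)$: you claim $\mathrm{tr}_{\widehat\omega_t}\omega_t\to 0$ there by invoking asymptotics of conical K\"ahler--Einstein metrics, whereas the paper adds a barrier $\epsilon\psi$, with $\psi=\log|s|_h^2$ a potential for $-[E]$ that tends to $-\infty$ on $E=\mathrm{Supp}(\mathcal D)$, and lets $\epsilon\to 0$ only at the end. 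The barrier version is more self-contained: it only needs $v_t=\log\mathrm{tr}_{\widehat\omega_t}\omega_t$ to be bounded above on $X_t\setminus D_t$, not that it tends to $-\infty$, so it sidesteps the sharp conic regularity theory (uniformly in $t$, including near intersections of components of $D_t$) that your argument imports. A small slip in your write-up: you say $\varphi_t$ has ``cone/log singularities'' near $D_t$, but $\varphi_t$ is globally bounded by the uniform estimate --- it is the metric $\widehat\omega_t$, not its potential, that is conic along $D_t$. The convergence $\widehat\omega_t\to\omega_{\rm KE}$ is also spelled out more carefully in the paper via a family of diffeomorphisms $F_t$ plus local $C^2$-compactness and uniqueness, rather than a generic ``stability'' citation, but that is a matter of exposition rather than substance.
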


\begin{rem}
Along the same lines, one can obtain the result above assuming instead that $(X,D)$ admits a crepant resolution, i.e. a proper bimeromorphic map $p:\widetilde X\to X$ such that $K_{\widetilde X}+\widetilde D=p^*(K_X+D)$ where $\widetilde D$ is the proper transform of $D$.
\end{rem}

\begin{proof}
We consider the smoothing $\pi:(\mathcal X,\mathcal D)\to \mathbb D$. Up to shrinking $\mathbb D$ slightly and adding $\pi^*dd^c |t|^2$ to $\omega_{\mathcal X}$, one can assume that the later form is strictly positive. Since $\omega_{\mathcal X}$ is the restriction of a smooth positive $(1,1)$-form under local embedding $\mathcal X \underset{\rm loc}{\hookrightarrow} \mathbb C^N$, one can assume that the bisectional curvature of $\omega_{\mathcal X}|_{\mathcal X^{\rm reg}}$ is bounded above by a given constant $C_1$, maybe up to shrinking $\mathbb D$ just a little more. This is because the bisectional curvature decreases when passing to holomorphic submanifolds. By the same argument, we see the bisectional curvature of $\omega_t=\omega_{\mathcal X}|_{X_t}$ is bounded above by $C_1$, for any $t\in \mathbb D$. \\

For $t\in \mathbb D$, we consider $\omt=\omega_t+dd^c \varphi_t$ the unique Kähler-Einstein metric of the pair $(X_t, D_t)$. For $t\neq 0$, existence and uniqueness of $\omt$ is due to \cite{Kol98} when 
$\lambda  \ge 0$ and to \cite{SSY,LWX19}, cf also \cite{PT23} when $\lambda <0$. In the latter case, we need $t$ to be small enough. The Kähler-Einstein metric solves
\begin{equation}
\label{MAt}
(\om_t+dd^c\varphi_t)^n=e^{\lambda \varphi_t} f_t \om_t^n
\end{equation}
for some non-negative function $f_t$ on $X_t$ satisfying $\int_{X_t} f_t^p \om_t^n\le C(p,\om)$ for some $p>1$ and some constant $C(p,\om)$ independent of $t\in \mathbb D$. This is proved in \cite[Lemma~4.4]{DGG} assuming additionally $X_0$ has canonical singularities, but the klt case can be proved with minimal changes, cf also \eqref{Lp} in the next section.

If $\lambda=0$, we normalize $\varphi_t$ by $\sup_{X_t} \varphi_t =0$. We claim that up to shrinking $\mathbb D$, there exists a constant $C_2>0$ such that 
\begin{equation}
\label{borne sup 0}
\|\varphi_t\|_{L^{\infty}(X_t)} \le C_2
\end{equation}
for any $t\in \mathbb D$. 

If $\lambda>0$, this is a consequence of \cite[Theorem E]{DGG}. If $\lambda=0$, this is a consequence of the proof of \cite[Theorem~6.1]{DGG}. Indeed, in {\it loc. cit.} the fibers are assumed to be canonical but this is a mostly cosmetic assumption since going from canonical to klt singularities leaves the proof resting on \cite[Lemma~4.4]{DGG} unchanged, as explained above. Finally, if $\lambda<0$, the estimate \eqref{borne sup 0} is proved in \cite[Proposition~2.23]{SSY}, \cite[Theorem 1.2 (iii)]{LWX19} when $\mathcal D$ is pluri-anticanonical and in \cite[Theorem~5.10]{PT23} in general.\\

Set $E:=\mathrm{Supp}(\mathcal D)$. We choose a smooth hermitian metric $h$ on $\mathcal O_{\mathcal X}(E)$, a section $s$ of that line bundle cutting out $E$, and we introduce the function $\psi:=\log |s|^2_{h}$. It satisfies $dd^c \psi=-i\Theta_h( E)$ on $\mathcal X \setminus E$, hence there exists a constant $C_3>0$ such that 
\begin{equation}
\label{psi}
\psi \le C_3 \quad \mbox{and} \quad dd^c \psi \ge -C_3 \om \quad \mbox{on } \mathcal X \setminus E.
\end{equation}
We introduce a number $\ep>0$ (meant to go to zero) and we assume wlog that $\ep C_3 \le 1$. We also define $v_t:=\log \mathrm{tr}_{\omt} \om_t$, which is smooth on $X_t \setminus E$ and globally bounded (for $t\neq 0$). We want to quantify that bound.

On $X_t\setminus E$, we have $\Ric \, \omt \ge -\omt$ and $\mathrm{Bisec}(X_t,\omega_t) \le C_1$. By Chern-Lu inequality, i.e. Proposition~\ref{Chern Lu}, we get
\[\Delta_{\omt} v_t \ge -1 -2C_1 e^{v_t}\]
Using the identity $\omt=\omega_t+dd^c \varphi_t$ and the inequality \eqref{psi}, we get 
\[\Delta_{\omt} (v_t-A \varphi_t+\ep \psi) \ge e^{v_t}-C_4.\]
where $A=2(C_1+1)$ and $C_4=An+1$. The maximum of the term inside the Laplacian is attained on $X_t\setminus E$, and an easy application of the maximum principle shows
\[v_t(x) \le C_5-\ep \psi(x)\]
for any $x\in X_t \setminus E$ and $\ep>0$, and where $C_5=\log C_4+2AC_2+1$. Passing to the limit when $\ep \to 0$, we find
\begin{equation}
\label{CL}
\omt \ge C_{5}^{-1} \om_t \quad \mbox{on } X_t\setminus E, \,\, \mbox{for any} \, t\in \mathbb D^*.
\end{equation} 
\medskip

Next, we choose a continuous family of smooth maps $F_t:X_{\rm reg} \to X_t$ inducing a diffeomorphism onto their image and such that $F_0=\mathrm{Id}_{X_{\rm reg}}$.  We claim that $F_t^*\omt$ converges locally smoothly to $\om_{\rm KE}$ on $X_{\rm reg}\setminus E$ when $t\to 0$. Thanks to \eqref{CL}, this would imply that 
\[\om_{\rm KE} \ge C_5^{-1} \om \quad \mbox{on }X_{\rm reg}\setminus E\]
hence everywhere on $X$ by Lemma~\ref{extension}.  \\

Set $X^\circ:=X_{\rm reg} \setminus E$. In order to show the convergence, we claim successively:
\begin{itemize}
\item The family of functions $F_t^*\varphi_t$ is precompact in the $\mathcal C^2_{\rm loc}(X^\circ)$-topology.
\item Each cluster value $\omega_{\infty}=\om+dd^c \varphi_{\infty}$ of $F_t^*\omt$ solves the Monge-Ampère equation
\[(\om+dd^c \varphi_{\infty})^n=e^{\lambda \varphi_{\infty} }f_0\om^n \quad \mbox{on } X^\circ.\]
\item Each cluster value $\varphi_{\infty}$ is globally bounded on $X^\circ$, hence its unique $\om$-psh extension to $X$ solves the equation above on $X$. 
\end{itemize}
By uniqueness of the Kähler-Einstein metric, it would then follow that $\om_{\infty}=\om_{\rm KE}$, concluding the proof of the convergence. Let us briefly justify each of the items above. 

The first point follows from the local Laplacian estimate on $X^\circ$ obtained from \eqref{CL}. Indeed, once the laplacian estimates are obtained, one can invoke Evans-Krylov and Schauder estimates, since $F_t^*\om_t$ (resp. $F_t^*J_t, F_t^*f_t$) converges locally smoothly on $X^\circ$ to $\om$ (resp $J, f_0$) when $t\to 0$. 

The second item is an immediate consequence of the first one. As for the last one, it is a consequence of \eqref{borne sup 0}.
\end{proof}

 \section{Isolated singularities} \label{sec:isolated}
 
 \subsection{Families of Monge-Ampère equations in a local setting}
 Throughout this section, we will work in the following geometric context. 

 \begin{set}
 \label{setup}
  Let $\cX \Subset \mathbb C^N$ be a bounded normal Stein space 
  of dimension $n+1$
  endowed with a surjective holomorphic map $\pi : \cX\to \mathbb D$ such that
  \begin{enumerate}[label=$\circ$]
  \item $\cX$ is $\Q$-Gorenstein and $K_{\cX/\mathbb D} \sim_{\mathbb Q} \mathcal O_{\cX}$.
  \item For every $t\in \mathbb D$,  the schematic fiber $X_t=\pi^{-1}(t)$ is irreducible and reduced.
  \item $X_0$ has klt singularities. 
  \end{enumerate}
  In the following, we fix a basepoint $0\in X_0$. 
 \end{set}

 The first item means that there exists an integer $m\ge 1$ such that the reflexive hull of $K_{\cX}^{\otimes m}$ is trivial. Once and for all, we pick a trivialization $\Omega \in H^0(\cX, mK_{\cX})$, that is, $\Omega|_{\cX_{\rm reg}}$ is non-vanishing. 
 We set $\Omega_t:=\frac{\Omega}{(d\pi)^{\otimes m}}\big|_{X_t}\in H^0(X_t, mK_{X_t})$; this induces a trivialization of $mK_{X_t}$ and we define the measure 
 \[\mu_{X_t}:=\mu_{X_t, \Omega_t}=i^{n^2}(\Omega_t\wedge \overline \Omega_t)^{\frac 1m} \quad \mbox{on} \, X_t,\]
 cf \textsection~\ref{sec can measure}. Recall that $X_t$ has klt singularities if and only if $\mu_{X_t}$ has finite mass on each compact subset of $X_t$. Since $X_0$ is klt, inversion of adjunction \cite[Theorem~5.50]{KM} (cf also \textsection~\ref{KKMS}) shows that $\cX$ is klt in the neighborhood of $X_0$ hence for any $\cX'\Subset \cX$ there exists $\delta>0$ such that $\cX'\cap X_t$ is klt for $|t|<\delta$. We will therefore shrink $\cX$ so that each $X_t$ is relatively compact in a klt space for $|t|$ small enough.

 \begin{defi}  \label{smoothable}
We will use the following terminology. 
 \begin{enumerate}[label=$\circ$]
\item  A holomorphic map $\pi:\cX\to \mathbb D$ as in Setup~\ref{setup} is a smoothing of $X_0$ if $\pi$ is smooth over $\mathbb D^*$. 
\item A normal Stein space $X$ is smoothable if there exists a family $\pi:\cX\to \mathbb D$ as in Setup~\ref{setup} such that $X\simeq X_0=\pi^{-1}(0)$ and $\cX\to \mathbb D$ is a smoothing of $X_0$.
\end{enumerate}
 \end{defi}

 In the geometric context provided by Setup~\ref{setup}, one can consider a natural family of Monge-Ampère equations which we now describe, and whose analysis will take up most of this section.
We pick a smooth, strictly psh non-positive function $\rho$ on $\cX$ such that $\partial \cX= \{\rho=0\}$ and set $\omega= dd^c \rho$. The restriction $\om|_{X_t}$ of $\omega$ to the fiber $X_t$ will be denoted by $\omega_t$.  
 Next, we extend $F$ (resp. $h$) to a smooth function on $\overline \cX)$ (resp.  $\partial \cX$) which we still denote by $F$ (resp. $h$). 
 We also choose a (small) neighborhood $V$ of $\partial \cX$ and set $V_t=V \cap X_t$.
 We are interested in the Dirichlet problem, i.e. finding a plurisubharmonic function $u_t \in \mathrm{PSH}(X_t)\cap \mathcal C^0(\overline{X_t})$ solution of
 \begin{equation}
 \label{MA}
 \tag{MA\textsubscript{t}}
 \begin{cases}
 (dd^c u_t)^n = e^{\lambda u_t+F_t} \mu_{X_t} & \mbox{on } X_t\\
 u_t|_{\partial X_t}=h_t
 \end{cases}.
 \end{equation}
with $\lambda \in \{0,1\}$. If $X_t$ is smooth, then the existence and uniqueness of $u_t$ is classical and provides a solution 
which is smooth in $\overline{X_t}$ (see \cite{CKNS85,GL10}). In this more general setting, the existence and uniqueness of $u_t$ is provided by \cite[Theorem A]{GGZ20} for $\lambda=0$ since $X_t$ is klt up to the boundary so that the density of $\mu_{X_t}$ with respect to $\om_t^n$ belongs to $L^p(X_t, \om_t^n)$ for some $p>1$. The case  $\lambda=1$ can be treated along very similar lines. 

\smallskip

The main technical contribution of this section is summarized in the following result.

\begin{prop} \label{thm:localsmoothing}
Let $\pi: \cX\to \mathbb D$ be a family as in Setup~\ref{setup} and let $u_t$ be the solution of the Monge-Ampère equation \eqref{MA}. We have the following. 
\begin{enumerate}
\item There exist constants $ \delta, C>0$ such that for any $|t|<\delta$, 
we have $\|u_t\|_{L^{\infty}(X_t)} \le C$.
\item If $\pi$ is smooth outside the basepoint $0\in X_0$, then for any $t\neq 0$ the equation \eqref{MA} admits a smooth subsolution $\underline {u}_t$ such that 
\[dd^c \underline{u}_t \ge \ep \omega_t \,\, \mbox{on} \, X_t, \quad \mbox{and} \quad 
\|\underline {u}_t\|_{\mathcal C^k(\bar V_t)} \le C(k)\]
for any $k\in \mathbb N$, where $\ep, C=C(k)>0$ are constant that do not depend on $t$. 
\end{enumerate}
\end{prop}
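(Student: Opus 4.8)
The plan is to treat the two items separately, the guiding principle being that the \emph{global} $L^\infty$ control of the Monge--Amp\`ere potentials is uniform in $t$ for $|t|$ small, whereas any higher-order control degenerates at the singular point $0\in X_0$ as $t\to0$ but stays uniform away from it, hence in particular on the collar $\overline V_t$. For (1), the upper bound is immediate: $u_t\in\PSH(X_t)\cap\mathcal C^0(\overline{X_t})$, so the maximum principle gives $u_t\le\sup_{\partial X_t}h_t\le\sup_{\partial\cX}|h|=:M$, uniformly in $t$. For the lower bound I would rewrite the right-hand side of \eqref{MA} as $(dd^cu_t)^n=g_t\,\mu_{X_t}$ with $g_t=e^{\lambda u_t+F_t}$; since $0\le g_t\le e^{\lambda M+\sup_{\overline{\cX}}|F|}$, it is enough to bound $\|f_t\|_{L^p(X_t,\om_t^n)}$ uniformly, where $\mu_{X_t}=f_t\,\om_t^n$. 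Here one uses that, $X_0$ being klt, inversion of adjunction \cite{KM} makes $\cX$ klt near $X_0$, so the density of its relative canonical measure is locally in $L^p$ for some $p>1$; slicing this integrability along the fibres --- exactly as in \cite[Lemma~4.4]{DGG}, see also \eqref{Lp} --- yields $\delta, C_p>0$ with $\int_{X_t}f_t^p\,\om_t^n\le C_p$ for $|t|<\delta$. Then $\|g_tf_t\|_{L^p(X_t)}$ is uniformly bounded, and the conclusion follows by invoking the family version of the a priori $L^\infty$ estimate of \cite{GGZ20}, i.e. Theorem~\ref{thm:uniformfamily}, whose very point is that the bound depends only on $n$, $p$, $\sup|h|$ and $\sup_{t}\|g_tf_t\|_{L^p}$, and not on the varying geometry of $X_t\hookrightarrow\cX$.

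For (2) I fix $t\ne0$; then $X_t$ is a smooth Stein manifold with strictly pseudoconvex boundary $\partial X_t=\{\rho_t=0\}$, $\rho_t:=\rho|_{X_t}$, and $\mu_{X_t}=f_t\,\om_t^n$ with $f_t>0$ smooth on $\overline{X_t}$. Set $M:=\sup_{\partial\cX}|h|$ and $C_0:=e^{M}$, and let $w_t\in\mathcal C^\infty(\overline{X_t})$ be the solution of the Dirichlet problem $(dd^cw_t)^n=C_0\,e^{F_t}\mu_{X_t}$, $w_t|_{\partial X_t}=h_t$, provided by \cite{CKNS85,GL10}. I would take $\underline{u}_t:=\rho_t+w_t$: then $dd^c\underline{u}_t=\om_t+dd^cw_t\ge\om_t$, so the positivity holds with $\ep=1$; $\underline{u}_t|_{\partial X_t}=h_t$; and since $\rho_t\le0$ and $w_t\le M$ by the maximum principle, one has $\underline{u}_t\le M$, hence $(dd^c\underline{u}_t)^n\ge(dd^cw_t)^n=C_0e^{F_t}\mu_{X_t}\ge e^{\lambda\underline{u}_t+F_t}\mu_{X_t}$ because $\lambda\underline{u}_t\le M=\log C_0$; thus $\underline{u}_t$ is a subsolution of \eqref{MA} (for both $\lambda=0$ and $\lambda=1$). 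For the uniform boundary jets, I would first shrink $V$ so that $\overline V$ is disjoint from a fixed neighbourhood of $0$; then on $\overline V$ the space $\cX$ is smooth, $\pi$ is a submersion, and $\rho$, $F$, $h$ and the relative canonical density restrict to $\overline V_t$ with all $\mathcal C^k$-norms --- and a positive lower bound for the density --- uniform in $t$. Since moreover $\|w_t\|_{L^\infty(X_t)}$ is uniformly bounded (upper bound as above, lower bound again by Theorem~\ref{thm:uniformfamily}, as $\|C_0e^{F_t}f_t\|_{L^p}$ is uniform by (1)), the classical a priori estimates for the complex Monge--Amp\`ere Dirichlet problem --- the boundary second-order estimates of \cite{CKNS85,GL10} near $\partial X_t$, the interior estimates on the part of $V_t$ at definite distance from $\partial X_t$, followed by Evans--Krylov and Schauder bootstrap, all fed only by the $t$-uniform data on $\overline V$ --- give $\|w_t\|_{\mathcal C^k(\overline V_t)}\le C(k)$ with $C(k)$ independent of $t$, and hence the same for $\underline{u}_t$.

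The main obstacle is the uniformity in $t$ on both sides. In (1) it is the conjunction of the $L^\infty$ estimate run with constants insensitive to the embedding $X_t\hookrightarrow\cX$ (this is exactly what the appendix's Theorem~\ref{thm:uniformfamily} is designed for) with the fibrewise $L^p$ bound on $f_t$ coming from inversion of adjunction and slicing. In (2) the delicate point is that $w_t$ admits \emph{no} uniform $\mathcal C^2$ bound on all of $X_t$ --- its Laplacian necessarily blows up near $0$ as $t\to0$, consistently with $X_0$ being singular, cf.\ Lemma~\ref{smooth germ} --- so the argument must be organised so that the a priori estimates are used only near $\partial X_t$, where the ambient geometry of $\cX$ along $\overline V$ is smooth and $t$-independent and uniformity can be recovered.
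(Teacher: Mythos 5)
For item (1) your route is essentially the paper's: uniform $L^p$-integrability of the densities $f_t$ obtained by slicing after a semi-stable reduction (this is Lemma~\ref{lem Lp}, patterned on \cite[Lemma~4.4]{DGG}), followed by the family $L^\infty$ estimate of Theorem~\ref{thm:uniformfamily}. This is fine.

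For item (2), you take a genuinely different route. The paper builds $\underline{u}_t$ explicitly on the semi-stable model $\cY\to\cX$, out of the functions $\log|s_i|^2_{h_i}$ and $|s_i|^{2\delta}_{h_i}$ attached to the exceptional divisors, then glues with a cut-off $\chi$ supported away from $\partial \cX$, so that on a collar of the boundary $\underline{u}_t=B\rho+h$; its $\mathcal C^k$-bounds on $\overline{V}_t$ are therefore manifest and involve no PDE a priori estimate whatsoever. You instead solve an auxiliary Dirichlet problem $(dd^c w_t)^n=C_0 e^{F_t}\mu_{X_t}$, $w_t|_{\partial X_t}=h_t$, and set $\underline{u}_t=\rho_t+w_t$. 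The subsolution property, the positivity $dd^c\underline{u}_t\ge\om_t$, and the uniform $L^\infty$ bound on $w_t$ via Theorem~\ref{thm:uniformfamily} all come out cleanly. The attractive feature is that you sidestep the explicit geometry of the semi-stable model in this step; the cost is that the regularity of $\underline{u}_t$ is no longer explicit.

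The gap is exactly in the step you compress into one sentence: the claim that $\|w_t\|_{\mathcal C^k(\overline{V}_t)}\le C(k)$ follows from ``the classical a priori estimates \ldots fed only by the $t$-uniform data on $\overline V$.'' That is not how the boundary $\mathcal C^2$ estimate works. As formulated in \cite{GL10} and as isolated in the Appendix's Theorem~\ref{thm laplacian estimates}, the constant in the boundary Laplacian estimate depends not only on the collar data $\rho$, $h$, $f$, but also on $\|v_t\|_{\mathcal C^2(\overline{V}_t)}$ for a collar subsolution $v_t$ with $v_t=w_t$ on $\partial X_t$, $v_t\le w_t$ on $V_t$, and $dd^c v_t\ge\ep\om_t$. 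Since the density $C_0e^{F_t}f_t$ of your auxiliary equation still blows up at the singular point, $A\rho_t+H_t$ is not a global subsolution for $w_t$, so the needed collar barrier is not automatic; this is precisely the delicacy that the Appendix is designed to handle, and the reason the paper constructs $\underline{u}_t$ algebraically rather than as the solution of another Monge--Amp\`ere equation. Your argument can be completed: using the uniform $L^\infty$ lower bound $w_t\ge -C$, one checks that $v_t^{\mathrm{loc}}:=(A+B)\rho_t+H_t$ is a collar barrier for $A,B$ large enough (compare on $V_t$ alone, using that the auxiliary density is uniformly bounded there and that $v_t^{\mathrm{loc}}\le -C\le w_t$ on the inner boundary of $V_t$), after which Theorem~\ref{thm laplacian estimates}, interior estimates on the rest of $\overline V_t$, and Evans--Krylov/Schauder close the argument. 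But absent this collar-barrier step, the one-sentence appeal to \cite{CKNS85,GL10} does not suffice, since those estimates presuppose exactly the controlled barrier you are trying to produce.
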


The proof of Proposition~\ref{thm:localsmoothing} is quite lengthy and will be provided in \textsection~\ref{proof technical thm} below.\\
 
 Building upon Proposition~\ref{thm:localsmoothing}, one can prove the following local version of Theorem~\ref{thm strict}, which will also be useful for understanding global problems further along in the paper.

 \begin{thm}
 \label{local strict positivite}
 Let $X'\Subset \mathbb C^N$ be a normal, connected $n$-dimensional Stein space with an isolated klt singularity at the origin. Let $X\Subset X'$ be a strongly pseudoconvex domain containing the origin and such that $K_{X}\sim_{\mathbb Q}\mathcal O_{X}$. Consider the solution $u\in \mathrm{PSH}(X)\cap L^{\infty}(X)$ of 
  \begin{equation}
 \label{MA 0}
 \tag{MA}
 \begin{cases}
 (dd^c u)^n = e^{\lambda u+F} \mu_X & \mbox{on } X\\
 u|_{\partial X}=h
 \end{cases}
 \end{equation}
 where $F\in \mathcal C^{\infty}(\overline X)$, $h\in \mathcal C^{\infty}(\partial X)$ and $\lambda\in \{0,1\}$. 
 
 If $X$ is smoothable, then $dd^cu$ is a Kähler current. More precisely, there exists $C>0$ such that $dd^cu \ge C^{-1} \omega_{\mathbb C^N}|_X$. 
 \end{thm}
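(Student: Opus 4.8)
The plan is to reduce the local statement Theorem~\ref{local strict positivite} to the family estimates of Proposition~\ref{thm:localsmoothing} together with the Chern--Lu inequality, mimicking on the Stein side the argument that proves Theorem~\ref{thm strict}. Since $X$ is smoothable, I would fix a smoothing $\pi:\cX\to\bD$ as in Definition~\ref{smoothable} with $X\simeq X_0$; shrinking $\cX$ if necessary, by inversion of adjunction each nearby fiber $X_t$ is klt (indeed smooth for $t\neq 0$) and relatively compact in a klt Stein space, and $K_{\cX/\bD}\sim_\Q\cO_{\cX}$. Choosing a smooth strictly psh non-positive exhaustion $\rho$ on $\cX$ with $\partial\cX=\{\rho=0\}$ and $\omega=dd^c\rho$, $\omega_t=\omega|_{X_t}$, and extending the boundary data $(F,h)$ to $\overline{\cX}$, I obtain the family of Dirichlet problems \eqref{MA} whose solutions $u_t$ satisfy $u_0=u$ after identifying $X$ with $X_0$ (up to the harmless difference between the chosen reference volume form and $\mu_X$, which only changes $F$). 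Proposition~\ref{thm:localsmoothing}(1) gives a uniform bound $\|u_t\|_{L^\infty(X_t)}\le C$ for $|t|<\delta$, and Proposition~\ref{thm:localsmoothing}(2) gives, for $t\neq0$, a smooth subsolution $\underline u_t$ with $dd^c\underline u_t\ge\ep\omega_t$ on $X_t$ and uniform $\cC^k$ bounds near $\partial X_t$.

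Next I would run the Chern--Lu/maximum-principle argument on the smooth fibers $X_t$, $t\neq0$. Set $\omega_{u_t}=dd^c u_t$, which by the classical regularity theory for the Dirichlet problem on the smooth manifold $X_t$ (cf. \cite{CKNS85,GL10}) is a genuine K\"ahler metric smooth up to $\overline{X_t}$, and which satisfies $\Ric\,\omega_{u_t}=-\lambda\omega_{u_t}-dd^cF_t+[\text{zero divisor of }\mu_{X_t}\text{ density}]$; since $K_{\cX/\bD}$ is trivial the density of $\mu_{X_t}$ is (up to a pluriharmonic factor) a sum of squared moduli of holomorphic functions, so $-\log$ of it is psh and $\Ric\,\omega_{u_t}\ge -C_1\omega_{u_t}-C_2\omega_t$ on $X_t$ for uniform $C_1,C_2$ (here $\lambda\ge0$ is used). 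The reference metric $\omega_t$, being the restriction of $\omega=dd^c\rho$ under $\cX\hookrightarrow\C^N$, has bisectional curvature bounded above by a uniform constant $C_3$ (curvature decreases on submanifolds), after shrinking $\bD$. Applying Proposition~\ref{Chern Lu} to $(\omega_t,\omega_{u_t})$ yields $\Delta_{\omega_{u_t}}\log\tr_{\omega_{u_t}}\omega_t\ge -C_1-(C_2+2C_3)\tr_{\omega_{u_t}}\omega_t$. Then I would consider the quantity $v_t:=\log\tr_{\omega_{u_t}}\omega_t - A\,u_t + B\,\underline u_t$ for suitable uniform constants $A,B>0$: using $dd^c u_t=\omega_{u_t}$ and $dd^c\underline u_t\ge\ep\omega_t$ one gets $\Delta_{\omega_{u_t}}v_t\ge \ep' e^{\log\tr_{\omega_{u_t}}\omega_t}-C_4$ for uniform $\ep',C_4>0$. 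Because of the interior gradient/boundary control: on $\partial X_t$, the metrics $\omega_{u_t}$ and $\omega_t$ are comparable with uniform constants (the boundary data $h_t$ and the domain are uniformly controlled, and $\underline u_t$ has uniform $\cC^2$ bounds near $\partial X_t$), so $v_t$ is bounded on $\partial X_t$ uniformly; the maximum of $v_t$ on $\overline{X_t}$ is therefore either attained on the boundary, where it is $\le$ a uniform constant, or at an interior point, where the maximum principle forces $\tr_{\omega_{u_t}}\omega_t\le C_4/\ep'$. Combined with the uniform bounds on $u_t$ and $\underline u_t$ this gives $\tr_{\omega_{u_t}}\omega_t\le C_5$ on $\overline{X_t}$, i.e. $\omega_{u_t}\ge C_5^{-1}\omega_t$ on $X_t$, with $C_5$ independent of $t\neq0$.

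Finally I would pass to the limit $t\to0$. Fix a continuous family of smooth embeddings $F_t:X^\circ:=X_{\rm reg}\hookrightarrow X_t$ with $F_0=\id$ (shrinking slightly inside $X$ so that everything is relatively compact and stays away from $\partial X_t$), and pull back. From the uniform Laplacian bound $\tr_{\omega_{u_t}}\omega_t\le C_5$ together with the uniform $L^\infty$ bound on $u_t$ and the uniform ellipticity it provides, the Evans--Krylov and Schauder estimates give local $\cC^{k,\alpha}$ bounds for $F_t^*u_t$ on $X^\circ$, uniform in $t$; hence $F_t^*u_t$ is precompact in $\cC^2_{\rm loc}(X^\circ)$. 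Any cluster value $u_\infty$ solves $(dd^cu_\infty)^n=e^{\lambda u_\infty+F}\mu_X$ on $X^\circ$ (since $F_t^*\mu_{X_t}\to\mu_X$ and $F_t^*F_t\to F$ locally smoothly away from the singularity and $\partial X$) and satisfies $\|u_\infty\|_{L^\infty}\le C$; matching boundary behavior is not needed because uniqueness of the bounded psh solution with the prescribed density on $X$ (Theorem~\ref{thm:ggz&co}, noting $u$ is continuous on $\overline X$ and $u|_{\partial X}=h$, and $u_\infty$ agrees with $u$ up to the boundary by stability of the Dirichlet problem under the convergence) forces $u_\infty=u$. Therefore $\omega_{u_t}\ge C_5^{-1}\omega_t$ passes to the limit to give $dd^cu\ge C_5^{-1}\omega$ on $X^\circ$, and since $dd^cu$ has bounded local potentials this extends across $X_{\rm sing}$ by Lemma~\ref{extension} (applied to $dd^cu-C_5^{-1}\omega$, with $Z=X_{\rm sing}$), whence $dd^cu\ge C_5^{-1}\omega\ge C^{-1}\omega_{\C^N}|_X$ on all of $X$.

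The main obstacle I expect is the boundary control in the Chern--Lu step: unlike the compact case of Theorem~\ref{thm strict}, here the maximum of $v_t$ can sit on $\partial X_t$, so one must genuinely use the uniform $\cC^2$-bounds on the subsolution $\underline u_t$ near $\partial X_t$ provided by Proposition~\ref{thm:localsmoothing}(2) to compare $\omega_{u_t}$ and $\omega_t$ uniformly up to the boundary — this is exactly why a subsolution, rather than merely an $L^\infty$ bound, is needed. A secondary technical point is ensuring the convergence $F_t^*u_t\to u$ is strong enough near the fixed boundary to identify the limit via uniqueness; this is where the uniform boundary estimates and the stability theory for the Dirichlet problem enter.
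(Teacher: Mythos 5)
Your overall strategy matches the paper's: deform to smooth fibers, run Chern--Lu there, and pass to the limit, invoking Lemma~\ref{extension} at the end. However, there is a genuine gap at the step where you need to control the maximum of your test function on $\partial X_t$.

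You write that on $\partial X_t$ ``the metrics $\omega_{u_t}$ and $\omega_t$ are comparable with uniform constants'' because ``the boundary data $h_t$ and the domain are uniformly controlled, and $\underline u_t$ has uniform $\mathcal C^2$ bounds near $\partial X_t$.'' This does not follow. The subsolution $\underline u_t$ gives a one-sided bound $u_t\ge\underline u_t$ and provides a barrier, but it does not by itself control the second derivatives of $u_t$ at the boundary; the Hessian of $u_t$ could a priori blow up near $\partial X_t$ even when $\underline u_t$ has uniform $\mathcal C^k$ bounds. What is actually needed is a uniform boundary Laplacian estimate $\|\Delta u_t\|_{L^\infty(\partial X_t)}\le C$, and that is precisely where the paper expends most of its technical effort: one must first obtain boundary gradient estimates via the sandwich $\underline u_t\le u_t\le -v_t$ (Lemma~\ref{lem:gradientbord}), then propagate them to a full gradient bound in a neighborhood of the boundary via a Blocki/Phong--Sturm argument in families (Proposition~\ref{prop gradient} and Proposition~\ref{prop gradient estimates}), and then run the CKNS/Guan--Li boundary second-order argument (Theorem~\ref{thm laplacian estimates}) with its three separate pieces (tangential, tangential-normal via the barrier and maximum principle, and the delicate normal-normal estimate via Lemma~\ref{positivite}). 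Only then does one obtain the two-sided comparison \eqref{eq:unifc2bis} on $\partial X_t$, and even that requires an extra step using the Monge--Amp\`ere equation itself, since an upper bound on $\Delta u_t$ does not directly give a lower bound on the eigenvalues of $dd^cu_t$ without knowing the determinant is bounded below. Your proposal names the subsolution as the reason the boundary is controlled, but the subsolution is merely one input to this chain; the chain itself is not something one can skip.

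Two smaller remarks. First, your expression for $\Ric\,\omega_{u_t}$ involving a ``zero divisor of the $\mu_{X_t}$ density'' is off: on the smooth fiber $X_t$, the trivialization $\Omega_t$ of $mK_{X_t}$ is nowhere-vanishing, so $\mu_{X_t}$ is a smooth positive volume form and the Monge--Amp\`ere equation gives directly $\Ric\,(dd^cu_t)=-\lambda\,dd^cu_t - dd^cF_t$ with no divisor term; the conclusion $\Ric\,(dd^cu_t)\ge -C_1\,dd^cu_t - C_2\,\omega_t$ is correct, but the route is simpler than you suggest. Second, your choice of test function $\log\tr_{\omega_{u_t}}\omega_t - Au_t + B\underline u_t$ is a sound variant (the $+B\underline u_t$ term supplies the good $\e\,\tr_{\omega_{u_t}}\omega_t$ contribution via $dd^c\underline u_t\ge\e\omega_t$, whereas $\Delta_{dd^cu_t}u_t=n$ is useless), and in fact clarifies a point the paper states only tersely; but the test-function choice is not where the difficulty lies.
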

 
  The proof of Theorem~\ref{local strict positivite} is provided in \textsection~\ref{sec:laplacian}. Its flavor is very similar to the proof of its global counterpart, but the a priori estimates in the local setting require significantly more work. The latter are quite classical on a fixed manifold and one only needs to check that the arguments can be made to work in families, which we have chosen to do carefully in the Appendix, cf. \textsection~\ref{sec:Appendix}.

\subsection{Proof of Proposition~\ref{thm:localsmoothing}}
\label{proof technical thm}
The proof involves a careful analysis of the behavior of the measure $\mu_{X_t}$ which is achieved using the semi-stable reduction theorem following the lines of \cite[\textsection~4]{DGG}. The two items in the theorem are then proved separately. 

\subsubsection{Semi-stable model}

 \label{KKMS}
  By \cite{KKMS}, one can find a semi-stable model of $\pi$. More precisely, up to shrinking $\bD$, there exists a finite cover $\vp:t\mapsto t^k$ of the disk for some integer $k\ge 1$ and a proper, surjective birational morphism $g:=\mathcal Y \to \cX':=(\cX\times_{\varphi}\bD)^{\nu}$ from a smooth manifold $\cY$, where $\nu$ stands for the normalization, as below 
  \begin{equation}
\label{semistable}
\begin{tikzcd}
\cY  \arrow[labels=below left]{rd}{p}  \arrow[bend left]{rr}{f} \arrow{r}{g} & \cX'  \arrow{r}{h} \arrow{d}{\pi'} & \cX \arrow{d}{\pi}  \\
 &\mathbb D \arrow{r}{\vp} & \bD 
\end{tikzcd}
\end{equation}
such that around any point $y\in p^{-1}(0)$, there exists an integer $\ell\le n+1$ and a system of coordinates $(z_0, \ldots, z_n)$ centered at $y$ and such that $p(z_0, \ldots, z_n)=z_0\cdots z_\ell$. Moreover, $\pi'$ is smooth (resp. $g$ is étale) away from $h^{-1}(\mathrm{Sing}(\pi))$, and that set has $\pi'$-relative codimension at least two.

Thanks to the generic smoothness theorem, one can shrink $\bD$ so that $p$ is smooth away from $0$. In particular,  the induced morphism $f|_{Y_t}:Y_t\to X_{\varphi(t)}$ is a resolution of singularities for any $t\neq 0$, where $Y_t=p^{-1}(t)$.
We want to understand the behavior of the measures (or volume forms) $\mu_{X_t}$  when $t\to 0$. This can be achieved quite explicitly on $\cY$ via pull back by $f$ as we now explain. 

As we mentionned above, $\pi$ and $\pi'$ are smooth in codimension one (even in codimension two); this implies that $\cX'\to \cX\times_{\varphi}\bD$ is isomorphic in codimension one, hence $X_0'\to  (\cX\times_{\varphi}\bD)_0$ is finite and generically $1-1$. Next, $(\cX\times_{\varphi}\bD)_0\to X_0$ is finite and $1-1$ on points. Therefore, $X_0'\to X_0$ is finite and generically $1-1$, hence it is isomorphic since $X_0$ is normal.  In particular, $X_0'$ is irreducible and has klt singularities.

Next, we write 
\begin{equation}
\label{canformula}
K_{\cY}+Y_0=g^*(K_{\cX'}+X_0')+\sum_{i\in I} a_i  E_i
\end{equation}
where the $E_i$'s are $g$-exceptional divisors and $Y_0$ is the strict transform of $X_0'$; it is irreducible. One should observe that some of the divisors $E_i$'s may be irreducible components of $p^{-1}(0)$ so that the inclusion $Y_0 \subsetneq p^{-1}(0)$ is strict in general. The divisors $E_i$ not included in the fiber $p^{-1}(0)$ surject onto $\bD$ since we have assumed that $p^{-1}(t)$ is irreducible for $t\neq 0$. In other words, we have 
\begin{equation}
\label{intersection}
\forall i, \quad g(E_i)\cap X_0' \neq \emptyset. 
\end{equation}
The divisor $E:=\sum_{i\in I} E_i$ is the exceptional locus of $g$ and $E+Y_0$ has simple normal crossing support. Now restrict \eqref{canformula} to each irreducible component of $Y_0$ and use adjunction to obtain
\begin{equation}
\label{canformula2}
K_{Y_0}=g^*K_{X_0'}+\sum_{i\in I} a_i  {E_i}|_{Y_0}
\end{equation}
 Since $E+Y_0$ is SNC, $g|_{Y_0}: Y_0\to X_0'$ is a log resolution of $X_0'$ and we find that
 \begin{equation}
\label{klt}
\forall i, \quad a_i>-1. 
\end{equation}
 Indeed, since $X_0'$ is klt, the inequality above holds for for any $i$ such that $E_i \cap Y_0 \neq \emptyset$. And that set of indices $i$ coincides with the full set $I$   thanks to \eqref{intersection} and the connectedness of the fibers of $g$. \\

 \subsubsection{Analysis of $\mu_t$.} 
 \label{sec:uniformlocal}
 Let $\omega$ be a fixed Kähler metric on $\cX$, and let us define the function $\gamma$ on $\cX_{\reg}= \cX \setminus \mathrm{Sing}(\pi)$ by 
\[i^{n^2}(\Omega\wedge \overline \Omega)^{\frac 1m}=e^{-\gamma} \om^n\wedge d\pi \wedge \overline{ d\pi}.\]
 
The main result in this section is the following: 
\begin{lem}
\label{lem Lp}
Up to shrinking $\bD$, there exists $p>1$ and a constant $C>0$ such that for any $t\in \bD$, one has
\begin{equation}
\label{Lp}
\int_{X_t} e^{-p\gamma}\om_t^n \le C.
\end{equation}
\end{lem}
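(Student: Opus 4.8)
The plan is to follow the semistable-reduction analysis of \cite[\textsection 4]{DGG}: inequality \eqref{Lp} is a version, \emph{uniform in $t$}, of the classical fact that klt canonical densities lie in $L^p$ \cite[Lemma~6.4]{EGZ09}, and the role of the model $\cY$ from \eqref{semistable} is to make the geometry of the degeneration independent of the parameter.

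First I would reduce to the base-changed family: since $\varphi\colon s\mapsto s^k$ is onto, it suffices to bound $\int_{X_{s^k}}e^{-p\gamma}\,\om_{s^k}^n$ uniformly for $s$ in the source disc. Shrinking it so that $p\colon\cY\to\mathbb D$ is smooth over the punctured disc, for $s\ne0$ the induced map $f|_{Y_s}\colon Y_s\to X_{s^k}$ is a resolution of singularities, and, denoting by $\delta_s$ the density of $\mu_{X_{s^k}}$ with respect to $\om_{s^k}^n$,
\[\int_{X_{s^k}}e^{-p\gamma}\,\om_{s^k}^n\ \le\ C\int_{X_{s^k}}\delta_s^{p}\,\om_{s^k}^n\ =\ C\int_{Y_s}(f^*\delta_s)^{p}\,f^*\om_{s^k}^n ,\]
the case $s=0$ being treated identically through $f|_{Y_0}\colon Y_0\to X_0'\simeq X_0$. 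On the relatively compact part of $\cY$ lying over $\cX$, cover a neighbourhood of $p^{-1}(0)$ by finitely many charts $U$ with coordinates $(z_0,\dots,z_n)$ in which $p(z)=z_0\cdots z_\ell$; away from $p^{-1}(0)$ the integrand is bounded and contributes a fixed constant. In such a chart the discrepancy formula \eqref{canformula} --- which also forces the discrepancies of the horizontal components to be constant along the family --- yields
\[f^*\mu_{X_{s^k}}=e^{\phi}\prod_j|z_j|^{2a_j}\,\nu_{Y_s},\qquad f^*\om_{s^k}^n=e^{\psi}\prod_j|z_j|^{2c_j}\,\nu_{Y_s}\qquad\text{on }Y_s\cap U,\]
with $\phi,\psi$ bounded, $\nu_{Y_s}$ the volume form of a fixed Kähler metric on $\cY$, the products over the (horizontal) SNC components of $E+Y_0$ meeting $U$, the $a_j$ the corresponding discrepancies from \eqref{canformula}--\eqref{canformula2} (hence $a_j>-1$ by \eqref{klt}), and $c_j\ge0$ the vanishing orders of the smooth degenerate form $f^*\om_{s^k}^n$. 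Therefore $(f^*\delta_s)^{p}\,f^*\om_{s^k}^n=e^{p\phi-(p-1)\psi}\prod_j|z_j|^{2(pa_j-(p-1)c_j)}\,\nu_{Y_s}$; since $pa_j-(p-1)c_j\to a_j>-1$ as $p\to1^+$ and only finitely many exponents occur, fix $p>1$ with $pa_j-(p-1)c_j>-1$ throughout.

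It then remains to prove the \emph{model estimate}, uniformly in $s$: for fixed exponents $b_j>-1$,
\[\sup_s\ \int_{\{z_0\cdots z_\ell=s,\ |z_0|,\dots,|z_n|<1\}}\ \prod_j|z_j|^{2b_j}\ \nu_{Y_s}\ <\ +\infty .\]
Parametrising $\{z_0\cdots z_\ell=s\}$ transversally to one of the factors and applying Fubini, this reduces to the one–variable bounds $\int_{|w|<1}|w|^{2b}\,d\lambda(w)<\infty$ for $b>-1$, with a constant independent of $s$. Summing over the finite cover and adding the bounded contribution gives \eqref{Lp}.

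The only genuinely delicate point is this uniformity as $t\to0$: on a \emph{fixed} fibre the estimate is classical, but a priori the constant degenerates along the family, and that is precisely what passing to a semistable model repairs --- the nearby fibres $Y_s$ are cut out \emph{uniformly} by $\{z_0\cdots z_\ell=s\}$, the central fibre is reduced with $E+Y_0$ of simple normal crossing support, and the discrepancies $a_j$ do not depend on $s$. The real work is then bookkeeping: checking that the comparison constants in the displays above are uniform in $s$ on the relevant relatively compact region, and that finitely many charts suffice --- this is where the reducedness and SNC conclusions of \cite{KKMS} enter, the argument running parallel to \cite[\textsection 4]{DGG}.
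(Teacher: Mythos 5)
Your plan follows the same outline as the paper's proof (pass to the semistable model $\cY$, work in the SNC coordinates, take $p\to 1^+$ so that the exponents $pa_j-(p-1)c_j\to a_j>-1$), but there is a genuine gap hidden in the sentence asserting that $f^*\om_{s^k}^n=e^{\psi}\prod_j |z_j|^{2c_j}\,\nu_{Y_s}$ with $\psi$ bounded. Since you then form $(f^*\delta_s)^p f^*\om_{s^k}^n = e^{p\phi-(p-1)\psi}\prod_j|z_j|^{2(pa_j-(p-1)c_j)}\nu_{Y_s}$ with $p>1$, what you really need is $\psi$ bounded \emph{below}, i.e.\ a uniform lower bound of $f^*\om_{s^k}^n$ by a monomial in the $z_j$'s. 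That statement is not justified in your writeup and I do not think it is true as written: if $\nu_{Y_s}$ is the restriction of a fixed Kähler volume form on $\cY$, then near a double point of $p^{-1}(0)$ the fibers $Y_s$ become arbitrarily steep, the induced volume form $\nu_{Y_s}$ blows up relative to $dV_{z_1\dots z_n}$, and the ratio $f^*\om_{s^k}^n/\nu_{Y_s}$ is not comparable to a monomial with bounded unit. (The analogous decomposition for $f^*\mu_{X_{s^k}}$ is fine as an upper bound, via the discrepancy formula and extendability of $b(z)=\prod z_j^{ma_j^-}a(z)$, but one gets an inequality, not a two-sided comparison.)

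The paper sidesteps this entirely: it never tries to compare $f^*\om_t^n$ to a model volume form. Instead it writes $e^{-p\gamma}\om_t^n=e^{-(p-1)\gamma}\mu_{X_t}$ and controls the density factor $e^{-\gamma}=\mu_{X_t}/\om_t^n$ directly from above. The essential analytic input is the Rong--Zhang type estimate (\cite[Thm.~B.1(i)]{RZ}) giving $\gamma=\log\sum_j|\sigma_j|^2+O(1)$ for bounded holomorphic $\sigma_j$ cutting out (a subset of) $\cX_{\rm sing}$; pulling back, $f^*\gamma\ge A\log|s_E|^2$ for some $A>0$, hence $e^{-(p-1)f^*\gamma}\le |s_E|^{-2(p-1)A}$. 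Combining with the monomial upper bound $f^*\mu_{X_t}\le C\prod_i|z_i|^{-2a_i^-}\,dV_{z'}$ one integrates directly against $dV_{z'}$ over the fixed compact sets $U_t$, and there is no need to involve $\nu_{Y_t}$ at all. So the ``model estimate'' you aim for is correct only after this substitution, and the lower bound on $\gamma$ (equivalently, the upper bound on the density $\delta_t$ by $|s_E|^{-2A}$) is precisely the ingredient your proposal is missing; it is not a matter of bookkeeping but the heart of the uniformity argument.
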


\begin{proof}[Proof of Lemma~\ref{lem Lp}]

Equation \eqref{canformula} can be reformulated by saying that the form $f^*(\frac 1 {\pi^m} \Omega)$ is a holomorphic section of $mK_{\cY}$ on $\cY\setminus p^{-1}(0)$ with a (possibly negative) vanishing order  $ma_i$ along $E_i$ and a pole or order $m$ along $Y_0$. Given $y\in Y_0$, pick a coordinate chart $(U,\underline z)$ centered at $y$ such that $p(z_0, \ldots, z_n)=z_0\cdots z_\ell$ for some $\ell \le n+1$. We can relabel the coordinates so that $(z_1, \ldots, z_n)$ are a system of coordinates on $Y_0$ and $U\cap Y_0=(z_0=0)$. Note that $(z_1, \ldots, z_n)$ remains a system of coordinates on $Y_t$ for $|t|$ small. One can choose an injection  $\{1, \ldots, \ell\}\to I$ such that $U\cap E_i=(z_i=0)$ for $1\le i\le \ell$ and $E_i\cap U=\emptyset$ for the other indices $i\in I$. 

On $U\setminus p^{-1}(0)$, one can write 
\begin{equation}
\label{pullback}
f^*\left(\frac 1 {\pi^m} \Omega\right)= a(z_0, \ldots, z_n) \left( \frac{dz_0}{z_0}\wedge dz_1 \wedge \cdots \wedge dz_n\right)^{\otimes m}
\end{equation}
for some holomorphic function $a$ on $U\setminus E$ such that $b(z):=z_1^{ma_1^-}\cdots z_\ell^{ma_\ell^-} a(z)$ extends holomorphically across $E$, where one defines $x^-:=-\min\{0,x\}$ for any $x\in \R$. Since $ma_i>-m$ from \eqref{klt}, we get $0\le ma_i^-<m$. 
 
 We have \[f^*\frac{\Omega}{(d\pi)^{\otimes m}}= \frac{f^*\Omega}{(dp)^{\otimes m}}=\frac{f^*(\frac 1 {\pi^m} \Omega)}{(\frac{dp}p)^{\otimes m}}\] as well as $\frac{dz_0}{z_0}\wedge dz_1 \wedge \cdots \wedge dz_n= \frac{dp}{p}\wedge dz_1 \wedge \cdots \wedge dz_n$ on $U$. Combining those two identities with \eqref{pullback}
 and recalling that we defined $\Omega_t=\frac{\Omega}{(d\pi)^{\otimes m}}|_{X_t}$, we get on $U\cap Y_t$:
 \[f^*\Omega_t= a(z)( dz_1 \wedge \cdots \wedge dz_n)^{\otimes m}\]
 and therefore there exists $C>0$ such that 
 \begin{equation}
 \label{mut}
 f^*\mu_{X_t} \le C \,  \frac{idz_1\wedge d\bar z_1 \wedge \cdots \wedge idz_n\wedge d\bar z_n }{\prod_{i=1}^\ell |z_i|^{2a_i^-}}. 
 \end{equation}

\medskip
Arguing as in the proof of \cite[Thm.~B.1(i)]{RZ} we can shrink $\cX$ further so that there exist bounded holomorphic functions $(\sigma_1, \cdots, \sigma_r)$ on $\cX$ satisfying $V(\sigma_1, \ldots, \sigma_r)\subset \cX_{\rm sing}$ and 
\begin{equation}
\label{gamma0}
\gamma = \log \sum_j |\sigma_j|^2+O(1)
\end{equation}

It follows from \eqref{gamma0} that there exists a constant $A>0$ such that 
$f^*\gamma \ge A \log |s_E|^2$
 where $s_E\in H^0(\cX,\cO_{\cX}(E))$ cuts out the exceptional divisor $E$ and $|\cdot |$ is a smooth hermitian metric on $\cO_{\cX}(E)$. Next, since $f:Y_t\to X_t$ is generically finite (with degree bounded independently of $t$), we get for any $p=1+\delta$: 
$$
\int_{X_t}e^{-p\gamma}\om_t^n \le \int_{Y_t}e^{-\delta f^*\gamma}f^*(i^{n^2}\Omega_t\wedge \overline \Omega_t)^{\frac 1m} 
 \le  \int_{Y_t}|s_E|^{-2\delta A} d\mu_{X_t} .
$$

Now, one can cover $Y_t$ by finitely many open sets $U_t=U\cap Y_t$ as above. On $U$, the system of coordinates $(z_0, \ldots, z_n)$ induces a system of coordinates $(z_1, \ldots, z_n)$ on $U_t$ such that we have
$$
|s_E|^{-2\delta A} \mu_{X_t} \le C \prod_{i=1}^\ell |z_i|^{-2(\delta A+a_i^-)} idz_1\wedge d\bar z_1 \wedge \cdots \wedge idz_n\wedge d\bar z_n
$$
for some uniform constant $C$ thanks to \eqref{mut}. Since $U\Subset \mathbb C^{n+1}$, the $U_t$ live in a fixed compact subset of $\mathbb C^n$ and 
the lemma follows by taking $\delta<\frac{1-\max a_i^-}{A}$. 
\end{proof}

\subsubsection{Proof of the uniform estimate.}
\label{sec:uniform estimate}
The first item of Proposition \ref{thm:localsmoothing}
is a consequence of \eqref{Lp} and
the following more general statement.

\begin{thm} \label{thm:uniformfamily}
Let $\pi: \cX\to \mathbb D$ be a family as in Setup~\ref{setup}. Fix $p>1$, $\lambda \in \R^+$, $h \in {\mathcal C}^{\infty}(\partial {\mathcal X})$
and $f_t \in L^p(X_t)$.
There exists a unique plurisubharmonic function 
$u_t \in \mathrm{PSH}(X_t)\cap \mathcal C^0(\overline{X_t})$ 
solution of
 \begin{equation*}
 \begin{cases}
 (dd^c u_t)^n = f_te^{\lambda u_t} \omega_t^n & \mbox{on } X_t\\
 u_t|_{\partial X_t}=h_t.
 \end{cases}
 \end{equation*}
 Moreover $\|u_t\|_{L^{\infty}(X_t)} \le C \|f_t\|_{L^p(X_t)}^{1/n}$, 
 where $C$ only depends on $p,n$ and $\|h_t\|_{L^{\infty}(\partial X_t)}$.
\end{thm}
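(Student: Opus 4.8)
The plan is to reduce the statement to a single-fiber result by first observing that each $X_t$ is a bounded strongly pseudoconvex domain (it inherits from $\rho$ the negative strictly psh exhaustion $\rho|_{X_t}$) in a reduced, locally irreducible klt Stein space, and that $X_0$ being klt forces $X_t$ klt for $|t|$ small by inversion of adjunction, as already noted in Setup~\ref{setup}. For a fixed $t$, existence and uniqueness of $u_t \in \mathrm{PSH}(X_t)\cap \mathcal C^0(\overline{X_t})$ solving the Dirichlet problem $(dd^c u_t)^n = f_t e^{\lambda u_t}\omega_t^n$, $u_t|_{\partial X_t}=h_t$, with $f_t \in L^p(X_t)$, $p>1$, is exactly (the $\lambda \ge 0$ version of) Theorem~\ref{thm:ggz&co}/\cite[Theorem~A]{GGZ20}: smoothness off the singular point is not needed here since we only claim continuity up to $\overline{X_t}$ and an $L^\infty$ bound. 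So the content of the theorem is really the \emph{uniform} $L^\infty$ estimate $\|u_t\|_{L^\infty(X_t)} \le C\|f_t\|_{L^p(X_t)}^{1/n}$ with $C$ depending only on $p$, $n$ and $\|h_t\|_{L^\infty(\partial X_t)}$, and in particular not on the geometry of the embedding $X_t \hookrightarrow \mathbb C^N$.

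First I would reduce to $h_t \equiv 0$ by a standard harmonic-majorant trick: let $H_t$ be the (unique) maximal psh function on $X_t$ with boundary values $h_t$, equivalently the Perron envelope; it satisfies $\|H_t\|_{L^\infty(X_t)} \le \|h_t\|_{L^\infty(\partial X_t)}$ and $(dd^c H_t)^n = 0$. Writing $u_t = H_t + v_t$ with $v_t \le 0$, $v_t|_{\partial X_t}=0$, it suffices to bound $\|v_t\|_{L^\infty}$; the sign $\lambda \ge 0$ means $e^{\lambda u_t} \le e^{\lambda \|h_t\|_\infty}$ where the estimate is needed (namely where $v_t$ is very negative, $u_t \le \|h_t\|_\infty$), so $v_t$ is a bounded psh function with $(dd^c v_t)^n \le e^{\lambda\|h_t\|_\infty} f_t\, \omega_t^n$ and zero boundary values. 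The core estimate is then the classical Kołodziej-type $L^\infty$ bound for the Dirichlet problem: if $\MA(v) \le g\, \omega^n$ with $g \in L^p$, $v|_{\partial} = 0$, then $\|v\|_{L^\infty} \le C(n,p)\,\|g\|_{L^p}^{1/n}$ relative to a reference measure. The key point making this uniform in $t$ is that the capacity/volume comparison underlying Kołodziej's proof can be run against the \emph{fixed} ambient Euclidean form $\omega_{\mathbb C^N}$ restricted to $X_t$, whose total mass $\int_{X_t}\omega_t^n$ and whose Alexander–Taylor capacity constants for $X_t \subset \mathbb C^N$ are controlled uniformly because $\cX \Subset \mathbb C^N$ is a fixed bounded Stein space and $\rho$ is a fixed exhaustion — so the family $\{X_t\}$ is, in a precise sense, uniformly pseudoconvex with uniformly bounded diameter.

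Concretely I would: (i) recall that on a strongly pseudoconvex domain $X_t$ one has the volume–capacity inequality $\mathrm{Vol}_{\omega_t}(\{v_t < -s-\ell\}) \le C\, \mathrm{Cap}_{X_t}(\{v_t<-s-\ell\})^{\ldots}$ with the Bedford–Taylor capacity computed inside $X_t$, together with $\int_{\{v_t<-s\}} \MA(v_t) \ge \text{(something)} \cdot \mathrm{Cap}_{X_t}(\{v_t<-s-1\})$, exactly as in the compact case \cite[Theorem~14.?]{GZbook}; (ii) feed in $\MA(v_t) \le C f_t\,\omega_t^n$ and Hölder to turn the right-hand side into $\|f_t\|_{L^p}\,\mathrm{Vol}_{\omega_t}(\{v_t<-s\})^{1-1/p}$; (iii) run De Giorgi's iteration lemma on $\phi(s):=\mathrm{Vol}_{\omega_t}(\{v_t<-s\})^{1/n}$ (or rather $\mathrm{Cap}$) to get $\{v_t < -s_\infty\} = \emptyset$ with $s_\infty \le C(n,p)\|f_t\|_{L^p}^{1/n}$, hence $\|v_t\|_{L^\infty} \le s_\infty$. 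The one thing to check carefully — and the main obstacle — is that every constant entering this iteration (the Alexander–Taylor / Josefson constant bounding the capacity of $X_t$ inside a fixed ball, and the total volume $\int_{X_t}\omega_t^n$) is bounded uniformly in $t$; this is where one uses that $\rho$ is a fixed strictly psh exhaustion on the fixed Stein space $\cX \Subset \mathbb C^N$, so that $X_t = \{\rho|_{X_t} < 0\}$ is cut out uniformly and the pluricomplex Green function of $X_t$ with a fixed pole obeys uniform two-sided bounds coming from the comparison with $\rho$ and with $A\log|z-z_0| + O(1)$. Granting that uniform comparison — which is precisely the point flagged in the remark after Theorem~\ref{thm:ggz&co} — the De Giorgi iteration gives the stated bound with $C=C(n,p,\|h_t\|_{L^\infty(\partial X_t)})$, and uniqueness follows from the comparison principle for the Monge–Ampère operator on locally irreducible complex spaces (Bedford \cite{Bed82}, Demailly \cite{Dem85}), since $\lambda \ge 0$ makes $t \mapsto t e^{\lambda t}$ nondecreasing.
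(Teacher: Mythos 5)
Your proposal does follow the same broad strategy as the paper — reduce to a Ko\l{}odziej-type $L^\infty$ bound for the Dirichlet problem and run a De Giorgi iteration on sublevel sets, comparing volume to Monge--Amp\`ere capacity and invoking the comparison principle for uniqueness. However, there is a genuine gap exactly where you flag the ``main obstacle,'' and your proposed resolution of it is not the right tool.

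The heart of the matter is the uniform Skoda-type integrability estimate
\[
\int_{X_t}\exp\bigl(-2^{-m_1}w\bigr)\,\omega_t^n \le C_1
\qquad\text{for all } w\in\mathrm{PSH}(X_t)\cap L^\infty(X_t),\ w|_{\partial X_t}=0,\ \int_{X_t}(dd^cw)^n\le 1,
\]
with $m_1,C_1$ independent of $t$. This is what makes the volume--capacity comparison $\mathrm{Vol}_{\omega_t}(K)\le C_1\exp\bigl(-2^{-m_1}\,\mathrm{Cap}(K,X_t)^{-1/n}\bigr)$ uniform, and it is \emph{not} a consequence of having a fixed exhaustion $\rho$ or of two-sided bounds on the pluricomplex Green functions of the $X_t$. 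The issue is the singular fiber: by Demailly's comparison theorem the Lelong number of $w$ at a singular point $x\in X_t$ is only bounded by $\mathrm{mult}(X_t,x)^{-1/n}$, not by $1$; to get exponential integrability with uniform constants one must pull back via the semi-stable resolution $f:\mathcal{Y}\to\mathcal{X}$ of Setup~\ref{setup}, track how Lelong numbers grow under (a bounded number of) blow-ups ($\nu(w\circ p_t,y)\le 2\nu(w,p_t(y))$), use a subextension trick to stay on a relatively compact subset, and then invoke a Skoda theorem \emph{for holomorphic families} on the smooth total space $\mathcal Y$ (as in \cite[Theorem 2.9]{DGG}). Without that Lelong-number analysis on the resolution there is no way to make the Alexander--Taylor / capacity constants uniform as $t\to 0$; a fixed $\rho$ only controls the domain, not the behavior of arbitrary competitors in the capacity at the singular point of $X_0$.

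There is also a local error in your reduction: after setting $H_t$ to be the maximal psh extension of $h_t$, the function $v_t:=u_t-H_t$ is a \emph{difference} of psh functions, not psh, so $(dd^cv_t)^n$ is undefined and the claim ``$v_t$ is a bounded psh function with $(dd^cv_t)^n\le e^{\lambda\|h_t\|_\infty}f_t\,\omega_t^n$'' is wrong. The correct move, which the paper follows, is to keep $u_t$ and $\Phi_t$ separate and apply the capacity/De Giorgi lemma directly to the sublevel sets $\{u_t-\Phi_t<-s-1\}$ via \cite[Lemma 1.3]{GKZ08}, which is designed to handle exactly this situation. This is fixable, but combined with the missing Skoda step it means the uniform $L^\infty$ estimate is not actually established by your argument.
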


\begin{proof}
The existence and uniqueness of $u_t$ is proved in \cite[Theorem A]{GGZ20} when $\lambda=0$; the case $\lambda>0$
can be treated similarly.

The key point here is to show that the solutions $u_t$ are uniformly bounded
on $X_t$ when $t\in \mathbb D$ varies, as soon as the densities $f_t$ are uniformly bounded in $L^p$.
This follows   from the analysis developed in \cite{GGZ20}, which is an extension of
Kolodziej's technique \cite{Kol98}.
Indeed  \cite[Proposition 1.8]{GGZ20} (applied with $v=0$) 
shows that $u_t$ is globally bounded on $X_t$, while
the uniform bound \eqref{Lp} allows one to show that this bound is also independent of $t$.
We provide a sketch of the proof as a courtesy to the reader.

\smallskip

\noindent {\it Step 1.}
We claim that there exists $m_1,C_1 \ge 1$ such that 
\begin{equation} \label{eq:skoda1}
\int_{X_t} \exp(-2^{-m_1}v_t) \omega_t^n \le C_1,
\end{equation}
for all $t \in \mathbb D$ and for all $v_t \in {\mathcal F}_t$, where
$$
{\mathcal F}_t:=\left\{ w \in \mathrm{PSH}(X_t) \cap L^{\infty}(X_t), \; w_{| \partial X_t}=0
\text{ and } \int_{X_t} (dd^c w)^n \le 1 \right\}.
$$

Indeed the family ${\mathcal F}_t$ is relatively compact, and any function $w$ in
$\overline{{\mathcal F}_t}$ belongs to the domain of definition of the complex Monge-Amp\`ere operator,
with zero boundary values and Monge-Amp\`ere mass less than $1$. It follows from Demailly's comparison
theorem that the Lelong number $\nu(w,x)$
is less than $1$ at a smooth point, and less than ${\rm mult}(X_t,x)^{-1/n}$ if $x$ is singular.

If $p_t:\widetilde{X_t} \rightarrow X_t$ is the blow up of a (single) smooth subvariety, then
$\nu(w,p_t(y)) \le \nu(w \circ p_t,y) \le 2 \nu(w,p_t(y))$ if $y$ 
belongs to the exceptional set $E_t$ of $p_t$,
while $\nu(w \circ p_t,y)=\nu(w,p_t(y))$ otherwise. This can be checked by embedding $X_t$ locally in $\mathbb C^N$ and using the explicit expression of the blow up of a smooth subvariety in the euclidean space. We infer that
there exists $m_1 \in \N$ such that
$\nu(w \circ f,y) \le 2^{m_1} \nu(w,f(y)) \le 2^{m_1}$
for all $y \in Y_t$.

We thus have a uniform control of the Lelong numbers of the compact 
family $f^* \overline{{\mathcal F}_t}$.
Using the subextension trick \cite[Lemma 1.7]{GGZ20}, we further reduce to controlling
$$
\int_{X'_t} \exp(-2^{-m_1}v_t) \omega_t^n =
\int_{Y'_t} \exp(-2^{-m_1}v_t \circ f) \omega_t^n
$$
on a relatively compact subset $X'_t \Subset X_t$.
We finally invoke Skoda's uniform integrability theorem
which holds for holomorphic families,
see \cite[Theorem 2.9]{DGG}.

\smallskip

\noindent {\it Step 2.}  We claim that for all compact subsets $K \subset X_t$,
\begin{equation} \label{eq:volcap}
{\rm Vol}_{\omega_t}(K) \le C_1 \exp \left( -\frac{1}{2^{m_1}{\rm Cap}(K,X_t)^{1/n}} \right),
\end{equation}
where ${\rm Cap}(K,X_t):=\sup \{ \int_K (dd^c w)^n, \, w \in \mathrm{PSH}(X_t) \text{ with } 0 \le w \le 1 \}$
denotes the Monge-Amp\`ere capacity. 

Indeed set $\lambda={\rm Cap}(K,X_t)^{-1/n}$
and $v_t=\lambda h_{K,X_t}^*$ where $h_{K,X_t}^*$ denotes the relative extremal function of the
compact set $K$ (see \cite[Definition 4.30]{GZbook}). 
It follows from \cite[Theorem 4.34]{GZbook} that $\int_{X_t} (dd^c v_t)^n=1$
and $v_t+\lambda=0$ a.e. on $K$, hence
\begin{eqnarray*}
{\rm Vol}_{\omega_t}(K)
&=& \int_K \exp \left( -2^{-m} [\lambda+v_t] \right) \omega_t^n \\
&\le &  \exp ( -2^{-m}\lambda) \int_{X_t} \exp \left( -2^{-m} v_t \right) \omega_t^n \\
&\le & C_1 \exp \left( -\frac{1}{2^{m_1}{\rm Cap}(K,X_t)^{1/n}} \right),
\end{eqnarray*}
where the last inequality follows from \eqref{eq:skoda1}.

\smallskip

\noindent {\it Step 3.} Let $\Phi$ denotes the maximal psh extension of $h$ to ${\mathcal X}$: this is the largest psh function in ${\mathcal X}$ which lies below $h$ at the boundary.
It is uniformly bounded in ${\mathcal X}$, satisfies $u_t \le \Phi_t$,
 and coincides with $h$ at the boundary.
We claim that for all $s,\delta>0$
\begin{equation} \label{eq:degiorgi}
\delta^n {\rm Cap}\{u_t-\Phi_t<-s-\delta-1\} \le 
\frac{c_{n,p} \|f_t\|_{L^p(X_t)}}{2^{m_1q}} {\rm Cap}\{u_t-\Phi_t<-s-1\}^2,
\end{equation}
where $1/p+1/q=1$. It follows indeed from \cite[Lemma 1.3]{GKZ08} that
$$
\delta^n {\rm Cap}\{u_t-\Phi_t<-s-\delta-1\} \le 
\int_{\{u_t-\Phi_t<-s-1\}} (dd^c u_t)^n.
$$
Since $(dd^c u_t)^n=f_t \omega_t^n$, we can apply H\"older inequality,
together with \eqref{eq:volcap} and the elementary inequality 
$\exp(-x^{-1/n}) \le c_{n,p} x^{2q}$, valid for all $x>0$, to conclude.

\smallskip

\noindent {\it Conclusion.}
It follows from \eqref{eq:degiorgi} that the function $g(s):={\rm Cap}\{u_t-\Phi_t<-s-1\}^{1/n}$
satisfies $\delta g(s+\delta) \le B g(s)^2$ for all $s,\delta>0$,
with $B=c_{n,p}^{1/n} \|f_t\|^{1/n}_{L^p(X_t)}2^{-m_1q/n}$.
We can invoke DeGiorgi's lemma (see \cite[Lemma 1.5]{GKZ08} with $\tau=1$) to conclude that 
$g(s)=0$ for $s \ge 4Bg(0)$.
Thus $u_t \ge \Phi_t-4Bg(0)-1$, which yields a uniform lower bound on 
$u_t$ if we can uniformly bound $g(0)$ fom above.

To estimate $g(0)={\rm Cap}\{u_t-\Phi_t<-1\}^{1/n}$ we 
let $w_t$ denote the extremal function of the set $\{u_t-\Phi_t<-1\}$. 
Recall that $-1 \le w_t \le 0$ hence
$$
w_t dd^c(\Phi_t-u_t)^n \le n(-w_t) (\Phi_t-u_t)^{n-1} dd^c u_t  \le n (\Phi_t-u_t)^{n-1} dd^c u_t.
$$
Using Stokes theorem $n$ times we thus obtain, following \cite{Blocki93}, 
\begin{eqnarray*}
{\rm Cap}\{u_t-\Phi_t<-1\} &\le&  \int_{X_t} (\Phi_t-u_t)^n (dd^c w_t)^n  \\
&\le & n! \int_{X_t} (dd^c u_t)^n \le  n! \|f_t\|_{L^p(X_t)} {\rm Vol}_{\omega_t}(X_t)^{1/q},
\end{eqnarray*}
which shows that $g(0)$ is uniformly bounded from above
by $c'_n \|f_t\|_{L^p(X_t)}^{1/n}$.
\end{proof}

\subsubsection{Existence of a suitable subsolution} 
\label{sec:subsol}


 The exceptional divisor $E$ of $g$ satisfies that there exist positive rational numbers $(b_i)_{i\in I}$ such that $-\sum_{i\in I} b_i E_i$ is $g$-ample, hence $f$-ample as well. On each $\mathcal O_{\cY}(E_i)$, one can pick a section $s_i$ cutting out $E_i$ as well as a smooth hermitian metric $h_i$ such that 
 $\rho':=f^*(A\rho)+\sum_{i\in i} b_i \log |s_i|^2_{h_i}$
  is strictly psh on $\cY$ for $A\gg 1$. 
  
From now on, we assume that $\pi$ is smooth away from our distinguished point $0\in X_0 \subset \cX$. This ensures for all $t\neq 0$, $\rho'$ is bounded on $Y_t\simeq X_{\varphi(t)}$, and that $\rho'$ is smooth on $\partial Y\simeq \partial X$.

  Next, one defines for $\delta>0$ small enough (to be determined later) the function
  \[v=v_\delta:= \rho'+\sum_{i\in I} |s_i|_{h_i}^{2\delta}.\]
Up to scaling the metrics $h_i$, we find that $v$ is strictly psh. More precisely, we can cover $\cY$ with finitely many coordinate charts $(U_\alpha)_{\alpha}$ such that
 $U_\alpha \cap E=(z_1\cdots z_{\ell_\alpha}=0)$ for some number $ \ell_\alpha \le n$ and in these charts we have
\begin{equation}
\label{minoration2}
dd^c v|_{U_\alpha} \ge c \left[ \omega_{\cY}|_{U_\alpha}+\sum_{k=1}^{\ell_\alpha} \frac{idz_k\wedge d\bar z_k}{|z_k|^{2(1-\delta)}}\right]
\end{equation}
for some $c>0$ and some fixed Kähler metric $\omega_{\cY}$ on $\cY$. In particular, it follows from \eqref{minoration2} 
above and \eqref{mut} that
\[ (dd^c v|_{Y_t})^n \ge c' f^*\mu_{X_t} \quad \mbox{on } Y_t\]
for some uniform $c'>0$. Since $v$ is uniformly bounded above on $\cY$ and $F$ is bounded on $\cX$, we can scale up $v$ so that 
 \[ (dd^c v|_{Y_t})^n \ge e^{\lambda v+f^*F_t} f^*\mu_{X_t} \quad \mbox{on } Y_t.\]
 Given $t\in \mathbb D^*$, there exists $s$ such that $\varphi(s)=t$ 
 and we denote by $v_t$ the function on $X_t$ defined by $v|_{Y_s}$ under the identification $Y_s \simeq X_t$ via $f$. Clearly, $v_t$ satisfies
  \[ (dd^c v_t)^n \ge e^{\lambda v_t+F_t} \mu_{X_t} \quad \mbox{on } X_t.\]
At this point, $v_t$ is not a subsolution of \eqref{MA} because the boundary condition is not satisfied. So we pick a cut-off function $\chi$  compactly supported on $\cX$ and satisfying $\chi \equiv 1$ near $0$. Next, we still denote by $h$ an arbitrary smooth extension of $h$ from $\partial \cX$ to $\cX$. Finally, we set 
\[\underline {u}_t:=\chi v_t+B\rho+ (1-\chi)h.\]
 One can easily see that for $B$ large enough, $\underline {u}_t$ is a subsolution of \eqref{MA}. Moreover, it is obvious on the shape of $\underline {u}_t$ that the estimates claimed in the second item of Proposition~\ref{thm:localsmoothing} are satisfied.

    \subsection{Proof of Theorem~\ref{local strict positivite}} 
  \label{sec:laplacian}
 
In this final subsection, we borrow the notation of Setup~\ref{setup} and assume that $\pi$ is smooth outside of $0\in X_0$. We consider the solution $u_t$ of \eqref{MA}.

First, we are going to derive higher order estimates near $\partial X_t$ of $u_t$ of \eqref{MA}. We will then conclude the proof of Theorem~\ref{local strict positivite} by using Chern-Lu inequality as in the proof of Theorem \ref{thm strict}.

  \begin{lem} \label{lem:gradientbord}
 There exists $C_1>0$ such that  for all $t\neq 0$,
 $\|u_t\|_{{\mathcal C}^1(\partial X_t)} \le C_1$.
 \end{lem}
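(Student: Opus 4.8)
The plan is to prove a uniform boundary gradient estimate by the classical barrier method, exploiting the two ingredients already at hand: the uniform $L^\infty$-bound and, crucially, the uniform subsolution $\underline{u}_t$ from Proposition~\ref{thm:localsmoothing}(2). First I would reduce matters to bounding the normal derivative. Since $u_t|_{\partial X_t}=h_t$ is the restriction of the fixed smooth function $h$ on $\overline{\cX}$, the derivatives of $u_t$ tangent to $\partial X_t$ agree with those of $h$ and are uniformly bounded; so only the inner normal derivative $\partial_\nu u_t$ along $\partial X_t$ needs to be controlled, where $\nu$ is the inner normal. (One should first note that away from the distinguished point $0\in X_0$, which lies in the interior, $X_t$ is a smooth manifold near its boundary and $\mu_{X_t}$ is a smooth positive volume form there, while $\partial X_t$ is smooth strongly pseudoconvex; classical boundary regularity for the complex Monge--Amp\`ere equation then makes $\|u_t\|_{\mathcal C^1(\partial X_t)}$ meaningful. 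Alternatively, the barriers below already yield the boundary Lipschitz estimate $|u_t(x)-u_t(x_0)|\le C|x-x_0|$ for $x_0\in\partial X_t$, which is all that is really needed.)

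For the upper barrier I would take $\Psi^+_t:=\widehat h-A\rho$ on $X_t$, where $\widehat h$ is a fixed smooth extension of $h$ to $\cX$, $\rho$ is the fixed strictly psh defining function with $\om=dd^c\rho$, $\rho\le 0$ and $\partial\cX=\{\rho=0\}$, and $A\gg 1$ is chosen once and for all so that $(A-1)\,dd^c\rho\ge dd^c\widehat h$ on $\overline{\cX}$. Then $u_t-\Psi^+_t=u_t+A\rho-\widehat h$ is psh on $X_t$, continuous up to the boundary, and vanishes on $\partial X_t$; the maximum principle (applicable since $X_t\Subset\cX$) gives $u_t\le\Psi^+_t$ on $X_t$, with equality on $\partial X_t$, and $\|\Psi^+_t\|_{\mathcal C^1(\overline{V_t})}$ is bounded independently of $t$ because $\rho$, $\widehat h$ and $A$ are. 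For the lower barrier I would simply use $\underline{u}_t$ itself: it is a psh subsolution of \eqref{MA} with $\underline{u}_t|_{\partial X_t}=h_t$, and since $\lambda\ge 0$ the comparison principle (equivalently, $u_t$ is the Perron envelope of the subsolutions of \eqref{MA}) yields $\underline{u}_t\le u_t$ on $X_t$, again with equality on $\partial X_t$; by Proposition~\ref{thm:localsmoothing}(2), $\|\underline{u}_t\|_{\mathcal C^1(\overline{V_t})}$ is uniformly bounded.

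Finally, with the sandwich $\underline{u}_t\le u_t\le\Psi^+_t$ on a fixed neighborhood of $\partial X_t$ and all three functions equal to $h_t$ on $\partial X_t$, differentiating along the inner normal at any $x_0\in\partial X_t$ gives $\partial_\nu\underline{u}_t(x_0)\le\partial_\nu u_t(x_0)\le\partial_\nu\Psi^+_t(x_0)$, hence a uniform bound on $|\partial_\nu u_t(x_0)|$; combined with the tangential bound this yields $\|u_t\|_{\mathcal C^1(\partial X_t)}\le C_1$ with $C_1$ independent of $t\neq 0$. I do not expect any genuine obstacle in this lemma: it is a routine barrier argument, and the only point demanding care is that every constant entering the two barriers be $t$-independent — which is precisely why the uniform subsolution of Proposition~\ref{thm:localsmoothing}(2), the truly hard input and already established, is invoked here; the upper barrier is elementary and manifestly uniform.
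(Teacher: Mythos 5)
Your proposal is correct and follows the same barrier/sandwich argument as the paper: the paper's upper barrier is $-v_t$ with $v=A\rho-H$ chosen psh (equivalently your $\Psi^+_t=\widehat h-A\rho$), the lower barrier is the uniform subsolution $\underline{u}_t$ from Proposition~\ref{thm:localsmoothing}(2), and the $\mathcal C^1$ bound is read off from the two-sided inequality together with equality on $\partial X_t$. The only cosmetic difference is that the paper chooses $A$ so that $A\rho-H$ itself is psh and then adds the psh function $u_t$, whereas you directly verify $u_t-\Psi^+_t$ is psh; both are immediate.
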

 
\begin{proof}
Let $H$ be an arbitrary smooth extension of $h$ to ${\mathcal X}$.
For $A$ large enough, the function $v:=A\rho-H$ is a smooth psh function near 
$\overline{{\mathcal X}}$ such that 
$v_t=-h_t$ on $\partial X_t$.
Thus $u_t+v_t$ is psh in $X_t$ with zero boundary values. It follow from the maximum principle 
that $u_t+v_t \le 0$ in $X_t$. Using the subsolution constructed in the previous
subsection we obtain a two-sided bound
$$
\underline {u}_t \le u_t \le -v_t,
$$
with $\underline {u}_t=u_t=-v_t=h_t$ on $\partial X_t$.
The desired uniform ${\mathcal C}^1$-bound on $\partial X_t$ follows.
\end{proof}

Once $\|u_t\|_{{\mathcal C}^1(\partial X_t)}$ is under control, one 
can obtain a global control of $\|u_t\|_{{\mathcal C}^1(V_t)} $
in a neighborhood $V_t$ of $\partial X_t$ that avoids the singular point. The proof of the following proposition in given in the Appendix, cf Proposition~\ref{prop gradient estimates}.

 \begin{prop} \label{prop gradient}
 There exists $C_1'>0$ such that  for all $t\neq 0$,
 $\|u_t\|_{{\mathcal C}^1(V_t)} \le C_1'$.
 \end{prop}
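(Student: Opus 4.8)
\textbf{Proof proposal for Proposition~\ref{prop gradient}.}

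The plan is to establish the gradient estimate on a fixed neighborhood $V_t$ of $\partial X_t$ by a barrier/maximum-principle argument applied to the tangential and normal derivatives of $u_t$, patterned on the classical interior-boundary gradient estimates for the complex Monge-Amp\`ere equation (as in \cite{CKNS85,GL10}), but checked to be uniform in the family parameter $t$. The starting data are: the uniform $L^\infty$ bound $\|u_t\|_{L^\infty(X_t)}\le C$ from Proposition~\ref{thm:localsmoothing}(1); the uniform boundary $\mathcal C^1$ bound $\|u_t\|_{\mathcal C^1(\partial X_t)}\le C_1$ from Lemma~\ref{lem:gradientbord}; the uniform family of smooth subsolutions $\underline u_t$ with $dd^c\underline u_t\ge\varepsilon\omega_t$ and $\|\underline u_t\|_{\mathcal C^k(\bar V_t)}\le C(k)$ from Proposition~\ref{thm:localsmoothing}(2); and the fact that near $\partial X_t$ everything is smooth and uniformly bounded (the density $e^{\lambda u_t+F_t}$ of the right-hand side, the forms $\omega_t$, the defining function $\rho$), because we have shrunk $\mathcal X$ so that $\pi$ is smooth away from the single point $0\in X_0$ and $V$ avoids $0$.

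First I would fix $V$ a neighborhood of $\partial\mathcal X$ inside $\mathcal X_{\rm reg}$, so that $V_t=V\cap X_t$ is a smooth annular region; since $\mathcal X\Subset\mathbb C^N$ we can work in the ambient coordinates, and after a local embedding the fibers $X_t$ are uniformly bounded smooth submanifolds for $t$ near $0$. On such a region I would differentiate the equation $(dd^c u_t)^n=e^{\lambda u_t+F_t}\mu_{X_t}$ in a direction $\partial_\xi$ tangent to $X_t$: the function $w=\partial_\xi u_t$ satisfies a linear elliptic equation $\Delta_{u_t}w=\partial_\xi(\lambda u_t+F_t+\log(\mu_{X_t}/\omega_t^n))$ with respect to the (positive, but a priori non-uniformly elliptic) operator $\Delta_{u_t}=\sum u_t^{i\bar j}\partial_i\partial_{\bar j}$. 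The standard device, going back to the real case, is to consider the quantity $|\nabla u_t|^2$ together with an auxiliary combination of $u_t$ and the subsolution $\underline u_t$, e.g. $G=\log|\nabla u_t|^2+\varphi(u_t-\underline u_t)$ for a well-chosen convex function $\varphi$, and to show that $\Delta_{u_t}G$ has a favorable sign away from its maximum, so that $\max_{\bar V_t}G$ is controlled by $\max_{\partial V_t}G$; the boundary $\partial V_t$ consists of $\partial X_t$, where Lemma~\ref{lem:gradientbord} applies, and an interior piece $X_t\cap\partial V$, where one needs either a cut-off or an interior a priori bound. Concretely I would use a localized version: multiply $|\nabla u_t|^2$ by a cutoff $\eta$ supported in $V$ and equal to $1$ on a slightly smaller neighborhood $V_t'$, and run the maximum principle for $\eta\,|\nabla u_t|^2\,e^{\varphi(u_t-\underline u_t)}$; the gradient terms coming from $\Delta_{u_t}\eta$ are absorbed using $dd^c\underline u_t\ge\varepsilon\omega_t$ (which makes $\Delta_{u_t}(u_t-\underline u_t)$ coercive, $\le -\varepsilon\,{\rm tr}_{u_t}\omega_t+n$) exactly as in the classical Blocki/Guan arguments. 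All the constants entering — from $F_t$, $\mu_{X_t}/\omega_t^n$ and its first two derivatives, from $\eta$, from $\underline u_t$ via $C(k)$, and from the boundary value via $C_1$ — are uniform in $t$, which is the whole point of having set up the family estimates in Proposition~\ref{thm:localsmoothing} and the Appendix; hence the resulting bound $\|u_t\|_{\mathcal C^1(V_t')}\le C_1'$ is uniform, and renaming $V_t'$ as $V_t$ gives the statement.

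The main obstacle is that the linearized operator $\Delta_{u_t}$ is \emph{not} uniformly elliptic a priori — we have no two-sided bound on $dd^c u_t$ yet, only the subsolution giving a one-sided control — so the terms $u_t^{i\bar j}(\partial_i\eta)(\partial_{\bar j}|\nabla u_t|^2)$ and the curvature-type error terms from differentiating twice in an ambient (curved) embedding cannot be bounded crudely; they must be absorbed against the good negative term $-c\,\varphi'\,{\rm tr}_{u_t}\omega_t\,|\nabla u_t|^2$ produced by the subsolution, which forces a careful choice of $\varphi$ (large first derivative) and of the Cauchy-Schwarz splitting. A secondary technical point is the passage between the intrinsic Monge-Amp\`ere equation on $X_t$ and computations in ambient $\mathbb C^N$-coordinates: one works with $\omega_t=\omega|_{X_t}$ and the induced second fundamental form contributes uniformly bounded extra terms, which is fine because $X_t$ moves in a compact family of smooth submanifolds away from $0$; this bookkeeping is exactly what is deferred to the Appendix, and I would cite Proposition~\ref{prop gradient estimates} there for the detailed verification that every constant is independent of $t$.
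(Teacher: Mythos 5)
Your proposal takes a genuinely different route from the paper's, and contains a step that is glossed over in a way that I do not think can be fixed without abandoning the cutoff.

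The paper does not localize at all. Proposition~\ref{prop gradient} is deduced from the appendix result, Proposition~\ref{prop gradient estimates}, which is a \emph{global} weighted gradient estimate on the log resolution $Y_t$: one passes to $v_t=f^*(u_t-A\rho)-\sum b_i\log|s_i|^2_{h_i}$ on $Y_t$ and proves $\||s|^N\nabla_{\omega_t}v_t\|_{L^\infty(Y_t)}\le C$ uniformly, using the quantity $\alpha=\log\beta-\gamma(v)+k\log|s|^2$. Since $V_t$ avoids both $0\in X_0$ and the exceptional divisor $E$, the weight $|s|^{2k}$ is bounded below there uniformly in $t$, and the $\mathcal C^1$ bound on $V_t$ follows. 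The role your cutoff $\eta$ is meant to play is taken here by the weight $|s|^{2k}$; the crucial structural point is that $|s|^2$ comes from hermitian metrics on line bundles, so that $dd^c\log|s|^2\ge -C\omega$ and $|\nabla|s|^2|_\omega\le C$ uniformly. These two inequalities (together with the dichotomy of Claim~\ref{claim gradient}, which lets one assume $\beta(x_0)|s(x_0)|^{2p}\ge1$ at the critical point) are precisely what allow the bad terms in Chern--Lu/Blocki--Phong--Sturm to be absorbed.

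Your cutoff scheme lacks this structure, and that is where I see a gap rather than a stylistic difference. At an interior maximum $x_0$ of $\eta\,|\nabla u_t|^2 e^{\varphi(u_t-\underline u_t)}$ (or $\eta^2\,\cdot$), the critical equation introduces $\nabla\eta/\eta$, and the term one must absorb is of the order $Q\,|\nabla\eta|^2_{\theta_t}/\eta$ (resp.\ $Q|\nabla\eta|^2_{\theta_t}$ after squaring), while the coercive term from the subsolution is $\eta\,Q\,\varepsilon\varphi'\,\mathrm{tr}_{\theta_t}\omega_t$ (resp.\ $\eta^2\cdot$). So the good term comes multiplied by a power of $\eta(x_0)$ and the bad one does not, and since $\varphi$ is a function of $u_t-\underline u_t$ rather than of position, you cannot make $\varphi'$ grow like $\eta^{-2}$. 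Invoking ``exactly as in the classical Blocki/Guan arguments'' does not resolve this: Blocki's and Phong--Sturm's gradient estimates are for Dirichlet boundary-value problems on the full domain, not interior estimates with a cutoff on a collar, and they do not face an inner artificial boundary. A second, smaller point: the dichotomy in the paper (if $\beta(x_0)|s(x_0)|^{2p}\le1$ one is done; otherwise $|s(x_0)|$ is bounded below in terms of $\beta(x_0)^{-1}$) is tailored to the weighted quantity and has no direct analogue for a cutoff, because the paper's bad terms are of the form $1/(\beta|s|^2)$ and $1/(\beta|s|^4)$ (with $\beta$ helping in the denominator), while the cutoff bad terms are $1/\eta^2$ with no compensating $\beta$. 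I would recommend either carrying out the paper's global weighted estimate on $Y_t$ and then restricting to $V_t$, or, if you insist on a local argument, replacing the cutoff by a function $\chi$ of $\rho$ and checking explicitly that $dd^c\chi$ stays bounded below relative to $\omega_t$ -- which for any smooth compactly supported $\eta$ (or for $\log(\rho+\delta)$) will fail near the inner boundary.

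Everything else in your sketch -- the identification of the uniform inputs ($\|u_t\|_{L^\infty}$, the boundary $\mathcal C^1$ bound from Lemma~\ref{lem:gradientbord}, the subsolution from Proposition~\ref{thm:localsmoothing}(2), uniform boundedness of $F_t$, $\mu_{X_t}/\omega_t^n$ and the ambient geometry on $V$) and the overall maximum-principle strategy -- is correct and is indeed what the paper uses.
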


In the Appendix, we explain how to derive Laplacian estimates near the boundary from lower order ones, cf. Theorem~\ref{thm laplacian estimates}. It is then straightforward to obtain the following

   \begin{prop}
   \label{prop laplacian}
   There exists $C_2>0$ such that for all $t\neq 0$, we have 
   \[\|\Delta u_t\|_{L^{\infty}(\partial X_t)} \le C_2. \]
   \end{prop}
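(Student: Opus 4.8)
The plan is to combine the lower-order estimates just established with the boundary second-order estimate for complex Monge-Amp\`ere equations, carried out uniformly in the family. First I would recall the setup: we have the solution $u_t$ of \eqref{MA} on $X_t$, and thanks to Theorem~\ref{thm:uniformfamily} (first item of Proposition~\ref{thm:localsmoothing}), Lemma~\ref{lem:gradientbord} and Proposition~\ref{prop gradient}, we already control $\|u_t\|_{L^\infty(X_t)}$, $\|u_t\|_{\mathcal C^1(\partial X_t)}$ and $\|u_t\|_{\mathcal C^1(V_t)}$ uniformly in $t\ne 0$, where $V_t$ is a fixed neighbourhood of $\partial X_t$ avoiding the singular point $0$. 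Since $\pi$ is smooth outside $0$, the boundary $\partial X_t$ varies smoothly with $t$ and stays uniformly away from the singularity, so all the classical local computations of Caffarelli--Kohn--Nirenberg--Spruck type take place on a genuinely smooth piece of the total space $\mathcal X$; the only real issue is uniformity of the constants as $t\to 0$.

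The key step is then an application of Theorem~\ref{thm laplacian estimates} from the Appendix, which is precisely the family version of the boundary Laplacian estimate for the equation $(dd^c u_t)^n=e^{\lambda u_t+F_t}\mu_{X_t}$. To invoke it I would check that its hypotheses are met uniformly in $t$: (i) the right-hand side density, which is $e^{\lambda u_t+F_t}$ times the density of $\mu_{X_t}$ with respect to $\omega_t^n$, is bounded below and above by positive constants and has bounded $\mathcal C^2$-norm on a fixed neighbourhood of $\partial X_t$ --- this holds because on $V_t$, which avoids $0$, the measure $\mu_{X_t}$ has smooth non-vanishing density (cf. Lemma~\ref{smooth germ}, applied at smooth points) varying smoothly with $t$, $F$ is a fixed smooth function on $\overline{\mathcal X}$, and $u_t$ is uniformly bounded; (ii) the boundary data $h_t$ has uniformly bounded $\mathcal C^3$- (or higher) norm, which is immediate since $h$ is a fixed smooth function on $\partial \mathcal X$; (iii) the background geometry, i.e. the second fundamental form of $\partial X_t$ and the $\mathcal C^k$-norms of $\omega_t$ near $\partial X_t$, is uniformly controlled, again because $\partial X_t$ moves smoothly in the smooth locus of $\mathcal X$; and (iv) a uniform strictly plurisubharmonic subsolution with the right boundary values exists --- this is exactly $\underline u_t$ constructed in \textsection~\ref{sec:subsol}, whose $\mathcal C^k(\bar V_t)$-norms are uniformly bounded by the second item of Proposition~\ref{thm:localsmoothing}. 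Feeding all of this into Theorem~\ref{thm laplacian estimates} yields $\|\Delta u_t\|_{L^\infty(\partial X_t)}\le C_2$ with $C_2$ independent of $t\ne 0$.

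The main obstacle is making sure every constant in the classical boundary Laplacian argument can be traced back to the uniformly controlled quantities above, in particular the use of the subsolution $\underline u_t$ to handle the double-normal derivative term (the delicate part of the CKNS boundary estimate), and the fact that the pure tangential and mixed tangential-normal second derivatives on $\partial X_t$ are controlled by the $\mathcal C^2$-norm of the boundary data together with the $\mathcal C^1$-bound on $u_t$ near $\partial X_t$ from Proposition~\ref{prop gradient}. Since all these inputs have been arranged to be $t$-uniform, and since the whole computation is localized in $V_t$ away from the singularity where everything is smooth, the conclusion follows by a direct citation of the Appendix result; there is no genuinely new analytic difficulty here beyond the bookkeeping, which is why the statement is phrased as ``straightforward''. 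I would therefore present the proof as: recall the lower-order uniform estimates, observe that the hypotheses of Theorem~\ref{thm laplacian estimates} hold with $t$-independent constants (citing Proposition~\ref{thm:localsmoothing}, Lemma~\ref{lem:gradientbord}, Proposition~\ref{prop gradient} and the smoothness of $\pi$ near $\partial \mathcal X$), and conclude.

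\begin{proof}
We keep the notation of Setup~\ref{setup} and assume $\pi$ is smooth outside $0\in X_0$. By the first item of Proposition~\ref{thm:localsmoothing} we have a uniform bound $\|u_t\|_{L^\infty(X_t)}\le C$, and by Lemma~\ref{lem:gradientbord} together with Proposition~\ref{prop gradient} we have $\|u_t\|_{\mathcal C^1(V_t)}\le C_1'$ for all $t\ne 0$, where $V_t=V\cap X_t$ is a fixed neighbourhood of $\partial X_t$ with $0\notin \overline V$. Since $\pi$ is smooth near $\partial\mathcal X$, the boundaries $\partial X_t$ and the metrics $\omega_t$ vary smoothly with $t$ and stay in the smooth locus of $\mathcal X$, so the second fundamental forms of $\partial X_t$ and the $\mathcal C^k$-norms of $\omega_t$ on $\overline{V_t}$ are uniformly bounded. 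Moreover on $\overline{V_t}$ the density of $\mu_{X_t}$ with respect to $\omega_t^n$ is smooth, positive and varies smoothly with $t$ (it is the restriction to $X_t$ of a fixed smooth function on $\mathcal X_{\mathrm{reg}}$ near $\partial\mathcal X$), so $e^{\lambda u_t+F_t}$ times this density is uniformly bounded below away from $0$, above, and in $\mathcal C^2(\overline{V_t})$-norm, using the uniform bound on $\|u_t\|_{L^\infty}$ and the fact that $F$ and $h$ are fixed smooth functions. Finally, the second item of Proposition~\ref{thm:localsmoothing} provides a family of subsolutions $\underline u_t$ of \eqref{MA} with $dd^c\underline u_t\ge \ep\,\omega_t$ on $X_t$ and $\|\underline u_t\|_{\mathcal C^k(\overline{V_t})}\le C(k)$ uniformly in $t$, and $\underline u_t=u_t=h_t$ on $\partial X_t$. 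Applying Theorem~\ref{thm laplacian estimates} to $u_t$ on $V_t$ with this data, all of whose hypotheses hold with constants independent of $t\ne 0$, we obtain $\|\Delta u_t\|_{L^\infty(\partial X_t)}\le C_2$ with $C_2$ independent of $t$.
\end{proof}
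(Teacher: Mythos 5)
Your proof is correct and follows the same route as the paper's: verify that the hypotheses of the family Laplacian estimate in the Appendix (Theorem~\ref{thm laplacian estimates}) hold with $t$-independent constants, using the uniform $L^\infty$ bound and the uniformly controlled subsolution from Proposition~\ref{thm:localsmoothing}, together with the $\mathcal C^1$ control near the boundary from Lemma~\ref{lem:gradientbord} and Proposition~\ref{prop gradient}. You actually make the invocation of Proposition~\ref{prop gradient} explicit, which the paper's one-line proof leaves implicit; the only tiny imprecision is that in Theorem~\ref{thm laplacian estimates} the factor $e^{\lambda u_t}$ is kept separate from $f_t$, so what one needs is $\|f_t\|_{\mathcal C^1(\overline{V_t})}$ (not a $\mathcal C^2$ bound on $e^{\lambda u_t+F_t}$ times the density), but this does not affect the argument since all those quantities are controlled.
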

   
   \begin{proof}
    It follows from Proposition~\ref{thm:localsmoothing} and Lemma~\ref{lem:gradientbord}
   that the assumptions of Theorem~\ref{thm laplacian estimates} are met
   and that we have uniform upper bounds on 
   $\|u_t\|_{{\mathcal C}^1(\overline{V_t})}$, $\|v_t\|_{{\mathcal C}^2(\overline{V_t})}$,
$\|f_t\|_{{\mathcal C}^1(\overline{V_t})}$, $\|h\|_{{\mathcal C}^4(\partial X_t)}$,
  $\e^{-1},\delta^{-1}$. The proposition follows. 
   \end{proof}

\begin{proof}[End of the proof of Theorem~\ref{local strict positivite}]

Pick a Kähler metric $\widehat \omega$ on $\mathcal X$ and set 
$\widehat \omega_t:=\widehat \omega|_{X_t}$. 
 Since $(dd^c u_t)^n$ is uniformly comparable to $\widehat \omega_t^n$ in a small neighborhood of $\partial X_t$,
 the  uniform bound $\|\Delta u_t\|_{L^{\infty}(\partial X_t)} \le C_2$ actually yields
a uniform constant $c_2>0$ such that 
\begin{equation} \label{eq:unifc2bis}
c_2^{-1} \widehat  \omega_t \le dd^c u_t \le c_2\widehat \omega_t
\; \; \text{ on } \partial X_t.
\end{equation}
Indeed, let $\sigma :=\inf_{t} \inf_{p\in \partial X_t} \liminf_{z\to p, z\in \partial X_t} \frac{\theta_t^n(z)}{\wh \om_t^n(z)}$; we have $\sigma>0$ since $\pi$ is smooth along $\partial \cX$ and $\|u_t\|_{L^{\infty}(X_t)}$ is uniformly bounded below by Theorem~\ref{thm:uniformfamily}. Given $p\in \partial X_t$, we have $\limsup_{z\to p, z\in X_t} \mathrm{tr}_{\wh \omega_t}\theta_t(z) \le C$ by our boundary Laplacian estimate hence 
\[\limsup_{z\to p, z\in X_t} \mathrm{tr}_{\theta_t}\wh \omega_t(z)\le \sigma^{-1} C^{n-1}\]
and \eqref{eq:unifc2bis} follows. 

Arguing as in the proof of Theorem \ref{thm strict}, 
we consider $v_t=\log {\rm tr}_{{\theta_t}}(\widehat \omega_t)$, where 
$\theta_t=dd^c u_t$ and deduce from
 the Chern-Lu formula that
 \begin{equation} \label{eq:CLb}
 \Delta_{\theta_t} (v_t-A u_t) \ge e^{v_t}-C_3,
 \end{equation} 
 for uniform constants $A,C_3>0$. We infer that 
 $v_t \le C_4$ is uniformly bounded from above. Indeed, 
either  the maximum of the function 
 $v_t-A u_t$ is reached in $X_t$ and the bound follows from
 \eqref{eq:CLb} and the uniform bound on $u_t$ or
 the maximum of $v_t-A u_t$ is reached on $\partial X_t$ and we conclude from \eqref{eq:unifc2bis}.
 
 Therefore $dd^c u_t \ge C_4^{-1}\widehat  \omega_t$ in $X_t$ for all $t \neq 0$.
Similarly to what we have done at the end of the proof of Theorem \ref{thm strict},
we conclude  by letting $t \rightarrow 0$ that
$dd^c u_0 \ge C_4^{-1} \widehat \omega_0$, 
hence $\omega_{\rm KE} =dd^c u_0$ is a K\"ahler current.
The proof of Theorem \ref{local strict positivite} is complete.
\end{proof}

 \section{K\"ahler-Einstein currents near isolated smoothable singularities}
  \label{sec:corisolated}
 
 We now use the previous analysis to establish the strict positivity of
 singular K\"ahler-Einstein metrics of non-positive curvature near smoothable isolated singularities.

  \subsection{The case of klt spaces $X$}

 \begin{thm} \label{thm:positivitylocale}
Let $X$ be a compact K\"ahler normal space with klt singularities such that either $K_X$ is ample or
$K_X \sim_{\Q} \mathcal O_X$.
Then a K\"ahler-Einstein metric $\omega_{\rm KE}$ 
 in the sense of Definition~\ref{def KE}  is a K\"ahler current near an isolated  smoothable singularity
  of $X$.
 \end{thm}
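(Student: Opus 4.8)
The plan is to localize the problem near a smoothable isolated singularity $p\in X$ and reduce it to Theorem~\ref{local strict positivite}. First I would choose a small strongly pseudoconvex Stein neighborhood $X'\Subset \mathbb C^N$ of $p$, obtained by embedding a neighborhood of $p$ in some $\mathbb C^N$ and intersecting with a small ball (or a sublevel set of a strictly psh exhaustion); shrinking $X'$, we may assume $p$ is the only singular point of $X'$ and, since $K_X\sim_{\mathbb Q}\mathcal O_X$ locally around any point (or $K_X$ ample, hence locally $\mathbb Q$-trivial), that $K_{X'}\sim_{\mathbb Q}\mathcal O_{X'}$. The hypothesis that the singularity is smoothable means that $X'$ (or a slightly smaller strongly pseudoconvex domain $X\Subset X'$ containing $p$) is smoothable in the sense of Definition~\ref{smoothable}. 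Fix such a domain $X$ with smooth boundary.

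Next I would reinterpret the restriction $\omega_{\rm KE}|_{X}$ as the solution of a local Dirichlet problem of the form \eqref{MA 0}. By definition, on $X$ one can write $\omega_{\rm KE}=dd^c u$ for some $u\in\mathrm{PSH}(X)\cap L^\infty(X)$: indeed $\omega_{\rm KE}=\omega+dd^c\varphi_{\rm KE}$ with $\omega=dd^c\rho$ for a local potential $\rho$ of the ambient Kähler form and $\varphi_{\rm KE}\in L^\infty$, so $u:=\rho+\varphi_{\rm KE}$ works after possibly shrinking $X$ so that $\omega$ has a global potential. The Monge-Amp\`ere equation \eqref{KE sing} together with the description of $\mu_{(X,D),h}$ (here $D=\emptyset$) in \textsection~\ref{sec can measure} shows that on $X$,
\[
(dd^c u)^n=e^{\lambda u+F}\,\mu_{X}
\]
for a smooth function $F$ on $\overline{X}$ (absorbing the smooth local weight of the hermitian metric $h$ and the pluriharmonic comparison term between $\rho$ and the local potential of $\omega_{\rm KE}$), where $\lambda\in\{0,1\}$ according to the Calabi-Yau or canonically polarized case. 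Since $X$ is klt with $K_X\sim_{\mathbb Q}\mathcal O_X$, $\mu_X$ has an $L^p$ density for some $p>1$ (cf.\ \cite[Lemma~6.4]{EGZ09} and Lemma~\ref{smooth germ}). The boundary value $h:=u|_{\partial X}$ is smooth since $\omega_{\rm KE}$ is a genuine Kähler metric, hence $\rho+\varphi_{\rm KE}$ is smooth, on the regular locus, and $\partial X$ avoids $p$ if $X$ is chosen small enough. Thus $u$ solves \eqref{MA 0}, and by uniqueness in Theorem~\ref{thm:ggz&co} (or Theorem~\ref{thm:uniformfamily}) it is \emph{the} solution of that Dirichlet problem.

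Finally I would invoke Theorem~\ref{local strict positivite}: since $X$ is smoothable, $dd^c u$ is a Kähler current on $X$, i.e.\ there is $C>0$ with $dd^c u\ge C^{-1}\omega_{\mathbb C^N}|_X$. As $dd^c u=\omega_{\rm KE}$ on $X$ and $\omega_{\mathbb C^N}|_X$ dominates a Kähler form on $X$ (it is the restriction of a Kähler form under a local embedding), we conclude that $\omega_{\rm KE}$ dominates a Kähler form on a neighborhood of $p$, which is exactly the assertion. The main subtlety to check carefully is the second step: namely that the potential $u$ built from $\omega_{\rm KE}$ genuinely has \emph{smooth} boundary data on $\partial X$ and that the density is of the precise shape $e^{\lambda u+F}\mu_X$ with $F$ smooth up to the boundary — this requires knowing that $\omega_{\rm KE}$ is smooth on $X_{\rm reg}$ (true by construction) and that $\partial X$ can be taken in the regular locus, which forces the choice of $X$ to be made with some care but presents no real difficulty. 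Everything else is a direct citation of the local theory already established.
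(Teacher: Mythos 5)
Your proposal is correct and follows essentially the same route as the paper's own proof: localize near the isolated smoothable singularity to a small strongly pseudoconvex domain with a local Kähler potential $\rho$, observe that $u=\rho+\varphi_{\rm KE}$ solves the local Dirichlet problem \eqref{MA 0} with smooth boundary data $u|_{\partial X}$ and a density of the form $e^{\lambda u+F}\mu_X$ for some $F\in\mathcal C^\infty(\overline X)$, and then cite Theorem~\ref{local strict positivite}. Your version is marginally tidier in pointing out explicitly that the smooth weight of the hermitian metric on $K_X$ and the pluriharmonic comparison term must be absorbed into $F$ (the paper fixes $F=\lambda\rho$ a bit tersely); otherwise the two arguments coincide.
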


 \begin{rem}In the case where $X$ is a $\Q$-Fano Kähler-Einstein variety (i.e. $-K_X$ is ample), we expect a similar result to hold as well but this requires a better understanding of local families of K\"ahler-Einstein metrics of positive curvature.
 \end{rem}
 
 \begin{proof}
We work near  an isolated singular point $a$.
 We let $B$ denote a small strictly pseudoconvex neighborhood of 
 $a$ in $X$, isomorphic to the trace of a ball in some local embedding in $\C^N$,
 and let $\rho$ denote a local smooth potential for $\omega=dd^c \rho$ in $B$.
 
 Recall from \cite{EGZ09, Paun} that the K\"ahler-Einstein potential 
 $\f_{\rm KE}$ is smooth in $B \setminus \{a\}$.
Define $\lambda=1$ if $K_X$ is ample, and $\lambda=0$ if $K_X \equiv 0$, and
 set $F=\lambda \rho$.
The local theory recalled in Section~\ref{sec:Dircont}
shows that $\psi=\rho+\f_{\rm KE}$ is the unique solution of the Dirichlet problem
$$
(dd^c w)^n=e^{\lambda w+F} \mu_{(X,a),h} \,\, \text{ in } B,
\; \; \text{ with } \; \; 
w_{|\partial B}=\p_{|\partial B}.
$$

We assume that $(X,a)$ is a smoothable singularity in the sense of Definition~\ref{smoothable} and we let $\pi: {\mathcal X} \rightarrow \mathbb D$
denote a smoothing so that $B=\pi^{-1}(0)$ and
$X_t=\pi^{-1}(t)$ is smooth for all $t \neq 0$.
We  let $h$ denote a smooth extension of $(\rho+\f_{\rm KE})_{| \partial B}$ to 
$\partial {\mathcal X}$,
and still denote by $F$ a smooth extension of $F$ to ${\mathcal X}$. 
It follows from  Proposition \ref{thm:localsmoothing} that there exists a unique
function $u_t \in \mathrm{PSH}(X_t) \cap {\mathcal C}^0(\overline{X_t})$ such that
 \begin{equation}
 \begin{cases}
 (dd^c u_t)^n = e^{\lambda u_t+F_t} \mu_t & \mbox{on } X_t\\
 u_t|_{\partial X_t}=h_t
 \end{cases}.
 \end{equation}
together with a uniform bound $\|u_t\|_{L^{\infty}(X_t)} \le C_0$.
 We can thus apply Theorem \ref{local strict positivite} and conclude that 
 $dd^c u_0=dd^c \p=\om_{KE}$ is a K\"ahler current near $a$.
 \end{proof}

 \subsection{The case of klt pairs $(X,\Delta)$}
 \label{sec klt pair}
 We would now like to investigate the strict positivity of Kähler-Einstein currents $\omega_{\rm KE}$ associated to compact klt pairs $(X, \Delta)$ near a smoothable isolated singularity $x\in X$. There are two possibilities. \\

 {\it Case 1.} If $x\notin \mathrm{Supp}(\Delta)$, then one can find a neighborhood $U$ of $x$ such that $\partial U \cap \mathrm{Supp}(\Delta)=\emptyset$ so that $\om_{\rm KE}$ is smooth on $\partial U$, and the same arguments used in the proof of Theorem~\ref{thm:positivitylocale} will carry over mutatis mutandis to show that $\omega_{\rm KE}$ is a Kähler current near $x$. \\
 
 {\it Case 2.} If $x\in \mathrm{Supp}(\Delta)$, then as a singularity of the pair $(X,\Delta)$, it is {\it not isolated} anymore. This reflects on the metric side as well since on the boundary $\partial U$ of a small neighborhood of $x$,  $\om_{\rm KE}$ is not smooth anymore. More precisely $\omega_{\rm KE}$  has conic singularities, to be understood in a generalized sense since $\Delta$ may not have snc support near $\partial U$. Even if $\Delta$ were smooth (or snc) away from $x$, the local analysis developed so far could not be applied directly and one would have to derive boundary laplacian estimate in this singular conic setting which probably requires a lot of work. Instead we can regularize the conic singularities {\it globally} to avoid boundary problems when applying Chern-Lu inequality. This will require us to assume that each component $\Delta_i$ of $\Delta$ is $\Q$-Cartier and that {\it any} singularity of $X$ is isolated and smoothable. \\
 
 We will state the main result of this section with a slightly weaker assumption than local smoothability, which will be useful later when we work with threefolds. 
 
 \begin{defi}
 We say that a germ of normal complex space $(X,x)$ is $\Q$-smoothable if there exists a finite Galois quasi-étale cover $p:Y\to X$ with $Y$ normal and connected such that for all $y\in p^{-1}(x)$, $(Y,y)$ is smoothable in the sense of Definition~\ref{smoothable}. 
 \end{defi}
 
 In the definition above, one can always shrink $Y$ and assume that $p^{-1}(x)$ is a singleton. 
 
 It will be convenient to introduce the following setup. 
 
 \begin{set}
 \label{setup 3}
 Let $(X,\omega_X)$ be a $n$-dimensional compact Kähler space endowed with an effective $\Q$-divisor $\Delta=\sum a_i \Delta_i$ such that $(X,\Delta)$ has klt singularities. We assume that each component $\Delta_i$ of $\Delta$ is a $\Q$-Cartier divisor and that $X$ has only $\Q$-smoothable, isolated singularities.
 \end{set}
 
 We are now ready to state the main result. 
 
 \begin{thm}
 \label{klt pairs}
Let $(X,\Delta)$ be as in Setup~\ref{setup 3} and consider the unique (normalized) solution $\varphi \in \mathrm{PSH}(X,\omega_X)\cap L^{\infty}(X)$ of the Monge-Ampère equation 
\[(\om_X+dd^c \varphi)^n=e^{\lambda\varphi+F} d\mu_{(X,\Delta),h}\]
where $F\in \mathcal C^{\infty}(X)$, $h$ is a smooth hermitian metric on the $\Q$-line bundle $K_X+\Delta$ and $\lambda \in \{0,1\}$. 

Then, the current $\omega_\varphi=\omega_X+dd^c \varphi$ is a Kähler current. 
 \end{thm}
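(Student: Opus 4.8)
The plan is to reduce to the local positivity statement already established, namely Theorem~\ref{local strict positivite}, by regularizing the conic singularities of $\omega_\varphi$ along $\Delta$ so that we obtain a nearby family of metrics that are \emph{smooth} on the boundaries of the small neighborhoods of the isolated singularities, at which point the local Chern-Lu machinery applies without having to deal with boundary Laplacian estimates in a conic setting. Concretely, I would work fiberwise: pick a small strictly pseudoconvex neighborhood $B_j$ of each (finitely many) isolated singular point $x_j\in X$, isomorphic to the trace of a ball in a local embedding, and leave $X\setminus \bigcup_j B_j$ untouched since $\omega_\varphi$ is already a genuine conic Kähler metric (hence a Kähler current) on the regular-minus-$\Delta$ locus there, and its positivity propagates across $\Delta\cap(X\setminus\bigcup B_j)$ by the standard conic theory together with Lemma~\ref{extension}.

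\textbf{Key steps.} \emph{Step 1 (Regularization of the pair).} Since each $\Delta_i$ is $\Q$-Cartier, choose sections $s_i$ of $\mathcal O_X(\Delta_i)$ cutting out $\Delta_i$ and smooth hermitian metrics $|\cdot|_i$, and replace the klt density $d\mu_{(X,\Delta),h}=\prod|s_i|_i^{-2a_i}\,d\mu_{(X,0),h'}$ by the regularized density $\prod(|s_i|_i^2+\e^2)^{-a_i}\,d\mu_{(X,0),h'}\cdot e^G$ for a suitable smooth $G$, and solve the corresponding Monge--Ampère equation for $\varphi_\e$. The resulting $\omega_\e=\omega_X+dd^c\varphi_\e$ is now a genuine \emph{smooth} Kähler metric on $X_{\mathrm{reg}}$ (no cone angles), the densities stay uniformly bounded in $L^p$ for a fixed $p>1$ (the regularization only improves integrability), so by \cite{EGZ09} and Kolodziej-type estimates $\|\varphi_\e\|_{L^\infty(X)}\le C$ uniformly in $\e$; moreover $\varphi_\e\to\varphi$ in $L^1$ (and locally smoothly away from $\Delta$) by uniqueness.

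\emph{Step 2 (Uniform Ricci lower bound).} On $X_{\mathrm{reg}}$ one has $\Ric\,\omega_\e=-\lambda\omega_\e+i\Theta(h)+\sum_i a_i\,dd^c\log(|s_i|_i^2+\e^2)$; the last sum is bounded below by $-C\omega_X$ independently of $\e$ (this is the elementary regularized-current estimate), so $\Ric\,\omega_\e\ge -C_1\omega_\e-C_2\omega_X$ uniformly. \emph{Step 3 (Apply the local theorem).} For each $j$, restrict everything to the neighborhood $B_j$ of $x_j$: we are exactly in the situation of Theorem~\ref{local strict positivite} — a smoothable (here $\Q$-smoothable, which after passing to the Galois quasi-étale cover $p\colon Y\to X$ becomes honest smoothability, and positivity descends since $p$ is quasi-étale so $p^*\omega_{\mathrm{KE}}$ is the KE current upstairs) isolated klt singularity, a solution of a Dirichlet problem with smooth boundary data $h_\e=\psi_\e|_{\partial B_j}$ (smooth because $\partial B_j$ avoids $\Delta$ after shrinking, wait — here $x_j$ may lie on $\Delta$, so $\partial B_j$ meets $\Delta$; this is precisely where regularization is needed: $\varphi_\e$ is \emph{smooth} on $\partial B_j$). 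Theorem~\ref{local strict positivite}, or rather the uniform version of its proof (Propositions~\ref{prop laplacian}, the Chern-Lu argument at the end of \textsection~\ref{sec:laplacian}), then gives $dd^c\psi_\e\ge c_j^{-1}\widehat\omega$ on $B_j$ with $c_j$ \emph{independent of $\e$}, using that the boundary and interior $L^\infty$ bounds, the Ricci lower bound, and the subsolution bounds are all uniform in $\e$. \emph{Step 4 (Pass to the limit).} Let $\e\to 0$: $\omega_\e\to\omega_\varphi$ as currents, and the uniform inequality $\omega_\e\ge c^{-1}\widehat\omega$ on $\bigcup_j B_j$ passes to the limit, while on $X\setminus\bigcup_j B_j$ the metric $\omega_\varphi$ is already a Kähler current by the conic theory; combining and using Lemma~\ref{extension} to cross $\Delta$, we get $\omega_\varphi\ge\varepsilon\,\omega_X$ globally.

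\textbf{Main obstacle.} The delicate point is Step~3: making the local a priori estimates (boundary $C^1$, boundary Laplacian, then the global Chern-Lu bound) \emph{uniform in the regularization parameter $\e$} and not just in the deformation parameter $t$. This requires checking that the subsolution construction of \textsection~\ref{sec:subsol} and the boundary estimates of the Appendix survive when the fiber data carries the extra regularized weight $\prod(|s_i|_i^2+\e^2)^{-a_i}$ — i.e. that the relevant $C^k(\overline V_t)$ norms of $F_t$ and of the subsolution, and the $L^p$ norm of the density, are bounded uniformly in both $t$ and $\e$. Since the regularization is a harmless perturbation (it is smooth on a fixed neighborhood of $\partial B_j$ and only increases integrability on the singular locus), this is expected to go through, but it is where the bulk of the verification lies; a clean way to organize it is to treat $(t,\e)$ jointly as a two-parameter family and quote the Appendix estimates in that generality. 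A secondary technical point is handling the $\Q$-smoothable (rather than smoothable) hypothesis: one passes to the Galois cover $Y\to X$, applies the already-proven pair/isolated results upstairs to $p^*\omega_\varphi$ (which solves the analogous equation by the ramification formula), and descends strict positivity, using that a quasi-étale cover is an isomorphism in codimension one so no new boundary divisor is created over $x_j$.
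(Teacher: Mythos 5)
Your overall approach --- regularizing the conic weight, running the local isolated-singularity analysis on each small neighborhood $B_j$, then passing $\e\to 0$ --- is the same starting point as the paper's, and your Steps 1, 2, and 4 are correct. The gap is in Step 3, and it is a genuine one, not a routine verification.

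You propose to obtain $dd^c\psi_\e\ge c_j^{-1}\widehat\omega$ on $B_j$ with $c_j$ \emph{independent of $\e$} by pushing the boundary $\cC^1$/Laplacian estimates and the subsolution bounds through uniformly in the two parameters $(t,\e)$. This does not work. The boundary Laplacian estimate of Theorem~\ref{thm laplacian estimates} (and the gradient estimate feeding into it) has constants that depend explicitly on $\|f_t\|_{\cC^1(\overline{V_t})}$ and on $\sigma^{-1}=(\inf_{V_t} f_t)^{-1}$. When $x_j\in\mathrm{Supp}(\Delta)$ --- exactly the case you single out as the reason for regularizing --- the set $\partial B_j$ meets $\Delta$, and the regularized weight $\prod_i(|s_i|^2+\e^2)^{-b_i}$ has $\cC^1$ norm of order $\e^{-2b_i}$ along $\Delta\cap\partial B_j$. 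So $\|f_t\|_{\cC^1}$ blows up as $\e\to 0$, and the resulting $c_j$ depends on $\e$ and in fact degenerates. The regularization is \emph{not} ``a harmless perturbation'' at the level of boundary estimates; it only makes the equation smooth, not uniformly so.

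The paper's proof circumvents this with a key lemma you are missing, namely Claim~\ref{non uniform}: to obtain the uniform bound $\omega_\e\ge C^{-1}\omega_X$ it suffices to have, for each \emph{fixed} $\e>0$, the merely \emph{qualitative} statement that $\omega_\e$ is a K\"ahler current (with an $\e$-dependent constant $C_\e$). Granting this, one runs Chern--Lu \emph{globally} on $X_{\rm reg}$ --- not locally on $B_j$ --- for the pair $(\omega_\e,\omega_X)$, using the three pieces of data that \emph{are} uniform in $\e$: the $L^\infty$ bound $\|\varphi_\e\|_{L^\infty}\le C_0$, the Ricci lower bound $\Ric\,\omega_\e\ge-\omega_\e-C_1\omega_X$, and the upper bound on $\mathrm{Bisec}(\omega_X)$. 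The qualitative estimate $\omega_\e\ge C_\e^{-1}\omega_X$ enters only to guarantee that $\log\mathrm{tr}_{\omega_\e}\omega_X+\delta\psi$ attains its supremum on $X_{\rm reg}$ so the maximum principle applies; the resulting bound $\mathrm{tr}_{\omega_\e}\omega_X\le C_4 e^{\delta(\sup\psi-\psi)}$ is uniform in $\e$, and one lets $\delta\to0$. The qualitative estimate itself is then supplied, for each fixed $\e$, by Theorem~\ref{local strict positivite} via the local quasi-\'etale cover near each $x_j$, exactly as you sketch; but none of the local constants need to be $\e$-uniform. This two-pass structure (qualitative local, then uniform global) is the essential idea you would need to add to make your proposal work.

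A secondary point: your claim that $\omega_\varphi$ is ``already a K\"ahler current'' on $X\setminus\bigcup_j B_j$ by ``standard conic theory together with Lemma~\ref{extension}'' glosses over the fact that $\Delta$ need not have snc support there, and the conic asymptotics of $\omega_\varphi$ along $\Delta$ are controlled only at generic points of $\Delta$. The paper's global Chern--Lu argument handles all of $X$ at once and does not rely on this.
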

 
 As an immediate consequence of the theorem above, we get:
 \begin{cor}
 \label{cor klt pairs}
 Let $(X,\Delta)$ be as in Setup~\ref{setup 3} such that $K_X+\Delta$ is ample (resp. $K_X+\Delta \sim_{\Q} \mathcal O_X$). Then  the unique Kähler-Einstein metric $\omega_{\rm KE}$ (resp. $\omega_{\rm KE}\in \{\omega_X\}$) solving
 \[\Ric \, \om_{\rm KE}= -\omega_{\rm KE}+[\Delta] \quad \mbox{(resp. }\Ric \, \om_{\rm KE}= [\Delta] \mbox)\]
 is a Kähler current. 
 \end{cor}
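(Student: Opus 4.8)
The plan is to deduce this corollary directly from Theorem~\ref{klt pairs}, the only real content being a dictionary between the Ricci-curvature equation and the complex Monge--Amp\`ere equation appearing there. First I would fix the reference data. If $K_X+\Delta$ is ample, I pick a smooth hermitian metric $h$ on the $\Q$-line bundle $K_X+\Delta$ whose curvature $\om_X:=i\Theta(h)$ is a K\"ahler form, and set $F\equiv 0$, $\lambda=1$. If $K_X+\Delta\sim_\Q\mathcal O_X$, I take $\om_X$ to be a K\"ahler form in the prescribed class, $h$ a flat hermitian metric on $K_X+\Delta$ rescaled so that $\int_X d\mu_{(X,\Delta),h}=\int_X\om_X^n$ (cf. Setup~\ref{set pairs}), and set $F\equiv 0$, $\lambda=0$. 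Since $(X,\Delta)$ is assumed to be as in Setup~\ref{setup 3}, all the hypotheses of Theorem~\ref{klt pairs} are then in force.

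Next I would recall, following \cite{EGZ09} and Definition~\ref{def KE}, that the unique (normalized) solution $\varphi\in\PSH(X,\om_X)\cap L^\infty(X)$ of
\[(\om_X+dd^c\varphi)^n=e^{\lambda\varphi}\,\mu_{(X,\Delta),h}\]
gives rise to $\om_{\rm KE}:=\om_X+dd^c\varphi$, and I would check by the standard local computation that this $\om_{\rm KE}$ satisfies the stated Ricci identity $\Ric\,\om_{\rm KE}=-\lambda\,\om_{\rm KE}+[\Delta]$. Indeed, rewriting the equation as $\om_{\rm KE}^n=e^{\lambda\varphi}\mu_{(X,\Delta),h}$ and using that $\Ric\,\mu_{(X,\Delta),h}=-i\Theta(h)+[\Delta]$ on $X_{\rm reg}$, together with $\om_X=i\Theta(h)$ in the ample case (resp. $i\Theta(h)=0$ in the numerically trivial case), yields the identity on $X_{\rm reg}$; it then extends to $X$ because $\varphi$ is bounded, by Lemma~\ref{extension}. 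This identifies $\om_\varphi=\om_{\rm KE}$ with the K\"ahler--Einstein metric of the statement, existence and uniqueness of the latter being part of the theory already cited.

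It then suffices to invoke Theorem~\ref{klt pairs} for this very $\varphi$ to conclude that $\om_\varphi=\om_{\rm KE}$ is a K\"ahler current. I do not anticipate any genuine obstacle here: all the work lies in Theorem~\ref{klt pairs} itself (and ultimately in Theorem~\ref{local strict positivite} and in the global regularization of the conic singularities of $\om_{\rm KE}$ along $\Delta$), while the present argument merely translates between the differential-geometric and the pluripotential formulations of the problem. The one point that requires a moment's care is the normalization of the flat metric $h$ in the Calabi--Yau case, chosen precisely so that the Monge--Amp\`ere equation is solvable with $F\equiv 0$.
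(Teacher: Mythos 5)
Your proposal is correct and follows exactly the route the paper intends: the paper presents the corollary as an immediate consequence of Theorem~\ref{klt pairs}, and your argument simply makes explicit the (straightforward) dictionary — take $F\equiv0$, pick $h$ with curvature $\om_X$ (resp.\ a flat, mass-normalized $h$), set $\lambda=1$ (resp.\ $\lambda=0$) — identifying the solution $\varphi$ of the Monge--Amp\`ere equation in Theorem~\ref{klt pairs} with $\varphi_{\rm KE}$ from Definition~\ref{def KE}. No discrepancy with the paper's reasoning.
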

 
 \begin{rem} We would like to add two comments on the result above
 \begin{enumerate}[label=$\circ$]
\item  It is conceivable that one could remove the assumption on the components $\Delta_i$ of $\Delta$ being $\Q$-Cartier by considering a $\Q$-factorialization $Y\to X$ of $X$, but it is not completely clear how the smoothability assumption would lift to $Y$. 
\item There is a noticeable difference between the assumptions of Theorem~\ref{thm:positivitylocale} and Corollary~\ref{cor klt pairs}, as in the latter one we need to assume that all singularities are isolated. It has to do with the fact that we deal with isolated singular points that may not be isolated as singularity of the pair $(X,\Delta)$ as explained in the beginning of \textsection~\ref{sec klt pair}, and this requires a subtle combination of local and global methods. If one is only interested in isolated singular points in $X\setminus \mathrm{Supp}(\Delta)$, then we would not need any global assumptions on the singularities of $X$ elsewhere, cf {\it ibid}.
\end{enumerate}
 \end{rem}
 
 \begin{proof}[Proof of Theorem~\ref{klt pairs}]
 By assumption, one can find an integer $m>0$ and sections $s_i\in H^0(X,\mathcal O_X(m\Delta_i))$ such that $\mathrm{div}(s_i)=m\Delta_i$. We pick hermitian metrics $h_i$ on the $\Q$-line bundle $O_X(\Delta_i)$ and define $|s_i|^2:=|s_i|^2_{h_i^{\otimes m}}$. Since $K_X$ is $\Q$-Cartier (as a difference $K_X=(K_X+\Delta)-\Delta)$ of $\Q$-Cartier divisors), one can find a metric $h_X$ on $K_X$ such that $h=h_X\otimes \bigotimes_i h_i^{\otimes a_i}$. Setting $b_i:=\frac {a_i} m$, one can rewrite the Monge-Ampère equation solved by $\varphi$ as 
 \[(\om_X+dd^c \varphi)^n=e^{\lambda\varphi+F} \frac{d\mu_{X,h_X}}{\prod_i |s_i|^{2b_i}}.\]
 We consider the unique (normalized) solution $\varphi_\ep$ of the regularized equation
  \[(\om_X+dd^c \varphi_\ep)^n=e^{\lambda\varphi_\ep+F} \frac{d\mu_{X,h_X}}{\prod_i (|s_i|^2+\ep^2)^{b_i}}.\]
and we set $\om_\ep:=\om_X+dd^c \varphi_\ep$. 
It follows from \cite{EGZ09,Paun}
that $\om_\ep$ is a Kähler form on $X_{\rm reg}$ for any $\ep>0$, and moreover there is a uniform constant $C_0>0$ such that 
 \begin{equation}
\label{borne Linfty}
\|\vp_\ep\|_{L^{\infty}(X)}\le C_0 \quad \mbox{and} \quad \varphi_\ep \underset{\ep \to 0}{\longrightarrow} \varphi \quad \mbox{in } L^1(X).
\end{equation}
thanks to \cite[Theorem~C]{GZ11} applied to a desingularization of $X$. Therefore, we can reduce the proof of the theorem to showing that there exists a constant $C>0$ independent of $\ep$ such that
\begin{equation}
\label{uniform ep}
\omega_\ep \ge C^{-1}\omega_X \quad \mbox{on } X.
\end{equation}

\begin{claim}
\label{non uniform}
The uniform inequality \eqref{uniform ep} holds if for any $\ep>0$, one has the qualitative inequality
\begin{equation}
\label{uniform ep 2}
\omega_\ep \ge C_\ep^{-1}\omega_X \quad \mbox{on } X,
\end{equation}
where $C_\ep>0$ is a positive constant that may depend on $\ep$.
\end{claim}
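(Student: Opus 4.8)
The plan is to reproduce, in the present singular setting, the Chern--Lu maximum principle argument already carried out in the proof of Theorem~\ref{thm strict}; the point is that the hypothesis \eqref{uniform ep 2} supplies, on the singular space $X$, exactly the a priori boundedness of $\log\tr_{\om_\ep}\om_X$ that the compactness of the smooth fibres $X_t$ supplied there. The cut-off function will no longer be of the form $\log|s|^2_h$ attached to a divisor, but rather $\Psi := -\log f_{X,h_X}$, where $f_{X,h_X}$ denotes the density of the canonical measure $\mu_{X,h_X}$ with respect to $\om_X^n$. By Lemma~\ref{smooth germ} together with the explicit description of $f_{X,h_X}$ recalled in \S\ref{sec can measure}, the function $\Psi$ is smooth on $X_{\rm reg}$, bounded from above on $X$, satisfies $dd^c\Psi \ge -C_\Psi\,\om_X$ for some $C_\Psi>0$, and tends to $-\infty$ at every point of $X_{\rm sing}$.

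First I would record that $\Ric\,\om_\ep$ is bounded from below on $X_{\rm reg}$ uniformly in $\ep$. Differentiating the Monge--Amp\`ere equation satisfied by $\varphi_\ep$ gives, on $X_{\rm reg}$,
\[\Ric\,\om_\ep \;=\; -\lambda\,\om_\ep + \lambda\,\om_X - dd^c F - i\Theta(h_X) + \sum_i b_i\,dd^c\log\big(|s_i|^2+\ep^2\big),\]
and a standard curvature computation (view $(s_i,\ep)$ as a section of $\mathcal O_X(m\Delta_i)\oplus\mathcal O_X$ with a fixed metric) gives $dd^c\log(|s_i|^2+\ep^2)\ge -C\,\om_X$ with $C$ independent of $\ep$; hence $\Ric\,\om_\ep\ge -\om_\ep - C_2\,\om_X$ for a uniform $C_2$. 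Since the fixed metric $\om_X$ has bisectional curvature bounded above on $X_{\rm reg}$, Chern--Lu inequality (Proposition~\ref{Chern Lu}) together with the identity $\Delta_{\om_\ep}\varphi_\ep = n - \tr_{\om_\ep}\om_X$ produce uniform constants $A,C_4>0$ such that, writing $v_\ep := \log\tr_{\om_\ep}\om_X$, one has for every $\delta\in(0,1/C_\Psi]$
\[\Delta_{\om_\ep}\big(v_\ep - A\varphi_\ep + \delta\Psi\big) \;\ge\; e^{v_\ep} - C_4 \qquad\text{on } X_{\rm reg}.\]

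Fix such a $\delta$ and set $H_{\ep,\delta} := v_\ep - A\varphi_\ep + \delta\Psi$. Here is where \eqref{uniform ep 2} enters decisively: it forces $\tr_{\om_\ep}\om_X \le nC_\ep$ on $X_{\rm reg}$, so $v_\ep$ is bounded above there; combined with $\|\varphi_\ep\|_{L^{\infty}(X)}\le C_0$ and $\Psi\to-\infty$ along $X_{\rm sing}$, this shows $H_{\ep,\delta}\to-\infty$ near the compact set $X_{\rm sing}$, so $H_{\ep,\delta}$ attains its maximum at some $p\in X_{\rm reg}$, where $0\ge\Delta_{\om_\ep}H_{\ep,\delta}(p)\ge e^{v_\ep(p)}-C_4$ and hence $v_\ep(p)\le\log C_4$. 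From $H_{\ep,\delta}(x)\le H_{\ep,\delta}(p)$ one then gets, for every $x\in X_{\rm reg}$,
\[v_\ep(x) \;\le\; \log C_4 + 2AC_0 + \delta\,(\sup_X\Psi - \Psi(x)).\]
For a fixed $x\in X_{\rm reg}$ the number $\Psi(x)$ is finite, so letting $\delta\to0$ removes the last term: $\tr_{\om_\ep}\om_X$ is bounded above on $X_{\rm reg}$ by a constant independent of $\ep$, i.e. $\om_\ep\ge C^{-1}\om_X$ on $X_{\rm reg}$, and Lemma~\ref{extension} propagates this inequality across $X_{\rm sing}$ --- which is exactly \eqref{uniform ep}.

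This is, step for step, the argument used in the proofs of Theorems~\ref{thm strict} and~\ref{local strict positivite}. The only genuinely delicate verifications are the ones feeding the two displayed inequalities: the uniform-in-$\ep$ lower bound $-C\,\om_X$ for the regularized forms $dd^c\log(|s_i|^2+\ep^2)$ (and for the remaining error terms produced by differentiating the Monge--Amp\`ere equation), and the three properties of $\Psi$ listed in the first paragraph, all of which rest on \S\ref{sec can measure} and Lemma~\ref{smooth germ}. Beyond this bookkeeping I do not expect any real obstacle.
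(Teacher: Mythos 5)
Your argument is correct and is essentially the paper's own proof: the same Chern--Lu inequality, the same uniform Ricci lower bound obtained from the regularized Lelong--Poincar\'e computation, the same auxiliary potential tending to $-\infty$ along $X_{\rm sing}$ to push the maximum into $X_{\rm reg}$, the same passage $\delta\to 0$ after the maximum principle, and the same appeal to Lemma~\ref{extension} to cross the singular set. The only (cosmetic) difference is that you construct the auxiliary function explicitly as $\Psi=-\log f_{X,h_X}$, whereas the paper simply invokes an abstract $\psi\in\PSH(X,\omega_X)$ with $(\psi=-\infty)=X_{\rm sing}$ and $\psi$ smooth on $X_{\rm reg}$; the properties you verify for $\Psi$ show it is one concrete instance of the paper's $\psi$ (after rescaling by $C_\Psi$), so no content is changed.
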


\begin{proof}[Proof of Claim~\ref{non uniform}]
Here again, the key tool is Chern-Lu inequality, Proposition~\ref{Chern Lu}. The bisectional curvature of $\om_X$ is bounded above since $\om_X$ can be extended to a Kähler metric in local embeddings in $\mathbb C^N$, and we need to bound $\Ric \, \om_\ep$ from below. 

A classical computation shows that if $s$ is a holomorphic section cutting out a divisor $D$ and if $h_D$ is a smooth hermitian metric on $L:=\mathcal O_X(D)$, then \[\Theta_{h_D}(L)+dd^c \log(|s|^2_{h_D}+\ep^2) = \frac{\ep^2}{(|s|_{h_D}^2+\ep^2)^2} \cdot |D's\wedge \overline{D's}|^2_{h_D}-\frac{\ep^2}{|s|_{h_D}^2+\ep^2} \cdot \Theta_{h_D}(L).\] In particular, if one choses a constant $C_D>0$ such that $\Theta_{h_D}(L)\le C_D\om_X$, then 
\begin{equation}
\label{minoration}
\Theta_{h_D}(L)+dd^c \log(|s|^2_{h_D}+\ep^2)\ge -C_D\om_X.
\end{equation} 
Since
\[\Ric \, \om_\ep = -\lambda \om_\ep+\lambda \om_X-dd^c F+\sum_i b_i \left( \Theta_{h_i^{\otimes m}}(\mathcal O_X(m\Delta_i))+dd^c \log(|s_i|^2+\ep^2)\right)\] 
and $F$ is smooth (so that its Hessian is bounded with respect to $\om_X$), inequality \eqref{minoration} ensures that one can find $C_1>0$ such that 
\[\Ric \, \om_\ep = -\om_\ep-C_1\omega_X.\]
Finally, let $\psi\in \mathrm{PSH}(X,\omega_X)$ be such that $(\psi=-\infty)=X_{\rm sing}$ and $\psi|_{X_{\rm reg}}\in \mathcal C^{\infty}(X_{\rm reg})$. For any $\delta>0$, the smooth quantity
\[\log \mathrm{tr}_{\om_\ep}\om_X+\delta \psi\]
on $X_{\rm reg}$ tends to $-\infty$ near $X_{\rm sing}$ thanks to our assumption \eqref{uniform ep 2}. In particular, the quantity above achieves its maximum on $X_{\rm reg}$. 
Moreover, our curvature estimates coupled with Proposition~\ref{Chern Lu} yield a constant $C_2>0$ such that  
\[\Delta_{\om_\ep} \left[\log \mathrm{tr}_{\om_\ep}\om_X+\delta \psi\right] \ge -1 -C_2\mathrm{tr}_{\om_\ep}\om_X \]
on $X_{\rm reg}$. Since $-\Delta_{\om_\ep} \varphi_\ep \ge -n+\mathrm{tr}_{\om_\ep}\om_X$, we infer that 
\[\Delta_{\om_\ep} \left[\log \mathrm{tr}_{\om_\ep}\om_X+\delta \psi-(C_2+1)\varphi_\ep \right] \ge \mathrm{tr}_{\om_\ep}\om_X-C_3 \]
where $C_3=-1+n(C_2+1)$. A classical application of the maximum principle shows that 
\[\mathrm{tr}_{\om_\ep}\om_X \le C_4 e^{\delta(\sup_X \psi- \psi)}\]
with $C_4=C_3e^{2C_0(C_2+1)}$. Passing to the limit when $\delta\to 0$, we get
\[\om_\ep \ge C_4^{-1} \om_X\quad \mbox{on } X_{\rm reg},\]
hence everywhere on $X$ by Lemma~\ref{extension}. The claim follows. 
\end{proof}
 In order to prove the theorem, we are left to establishing the qualitative estimate \eqref{uniform ep 2}. In order to achieve that, we use a local deformation argument. From now on, $\ep>0$ is fixed and all subsequent constants are allowed to depend on $\ep$. Since $\om_\ep$ is smooth on $X_{\rm reg}$, it is enough to work in small neighborhoods of the finitely many singular points in $X$. From now on, we pick a small Stein neighborhood $U'$ of $x$ admitting a quasi-étale cover $p:V'\to U'$ such that $p^{-1}(x)=\{y\}$ is a singleton and $(V',y)$ admits a smoothing $\mathcal V\to \mathbb D$ whose fibers $V_t$ satisfy $V_0\simeq V'$. 
 
 One can assume that  $U'$ is small enough so that $\om_X|_{U'}=dd^c \rho$ for some smooth strictly psh function $\rho$ on $U'$. Next, since $p$ is quasi-étale, we have $K_{V'}=p^*K_{U'}$ and the smooth hermitian metric $h_{U'}:=h_X|_{U'}$ on $K_{U'}$ pulls back to a smooth hermitian metric $h_{V'}$ on $K_{V'}$ satisfying $p^*\left(d\mu_{U',h_X}\right)=d\mu_{V',h_{V'}}$. 
 
 Next, we fix $U\Subset U'$ strongly pseudoconvex and set $V:=p^{-1}(U)$. The function $v_\ep:=p^*(\rho+\varphi_\ep)$ satisfies
\[
 \begin{cases}
 (dd^c v_\ep)^n = e^{\lambda v_\ep+p^*(F-\lambda \rho)}  \frac{d\mu_{V,h_V}}{\prod_i (|p^*s_i|^2+\ep^2)^{b_i}} & \mbox{on } V\\
 v_\ep|_{\partial V}=p^*((\rho+\varphi_\ep)|_{\partial U})
 \end{cases}
 \]
hence by Theorem~\ref{local strict positivite} the current $dd^cv_\ep$ is a Kähler current. In particular, it dominates a multiple of $p^*(\om_X)|_U$. Pushing forward, we get that in restriction to $U$, we have $\om_\ep \ge C_{\ep}^{-1} \omega_X$ and the theorem is proved.
 \end{proof}

 \section{Kähler-Einstein currents on threefolds}
 \label{threefolds}
   In this final section, we provide two results in dimension three (zero and negative curvature) ensuring that Kähler-Einstein metrics on a compact klt space are 
   Kähler currents without any extra assumption on the singularities. Although the proofs of the two results follow the same lines, we have chosen to write them separately to highlight the non-trivial simplifications occuring in the zero curvature case (or dually to insist on the consequential additional difficulties popping up in the negative curvature case). 
   
   \smallskip
   
   The crucial input specific to dimension three is Reid's classification of terminal singularities: 
   
   \begin{thm}(\cite{Reid80})
   \label{Reid}
   Let $(X,x)$ be an (isolated) terminal singularity of dimension three such that $K_X\sim_{\Z} \mathcal O_X$. Then $(X,x)$ is a compound du Val singularity. In particular, any terminal singularity of dimension three is $\Q$-smoothable. 
   \end{thm}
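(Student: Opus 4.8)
The plan is to derive the statement from two well-known inputs: Reid's classification \cite{Reid80} of Gorenstein terminal threefold singularities, which is exactly the first sentence, and the classical smoothability of isolated hypersurface singularities.

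The first assertion I would simply quote from \cite{Reid80}: a three-dimensional terminal singularity with $K_X\sim_{\Z}\cO_X$ is compound du Val, i.e.\ up to isomorphism $(X,x)\cong(\{g(x_1,x_2,x_3)+t\,h(x_1,x_2,x_3,t)=0\},0)\subset(\C^4,0)$ with $\{g=0\}$ a du Val surface singularity; in particular $(X,x)$ is an isolated hypersurface singularity, isolated because terminal threefold singularities always are.

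For the \emph{in particular} clause I would first reduce to the Gorenstein case. Given an arbitrary three-dimensional terminal singularity $(X,x)$, pass to its index-one cover $p\colon(\wh X,\wh x)\to(X,x)$: this is a finite cyclic --- hence Galois --- quasi-étale morphism with $\wh X$ normal and connected as a germ, again terminal, and with $K_{\wh X}\sim_{\Z}\cO_{\wh X}$. By the first assertion $(\wh X,\wh x)$ is compound du Val, hence an isolated hypersurface singularity $(\{F=0\},0)\subset(\C^4,0)$, so it suffices to produce a smoothing of $(\wh X,\wh x)$ in the sense of Definition~\ref{smoothable}; then $(X,x)$ is $\Q$-smoothable by definition. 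For this I would take, on a small polydisc $B\subset\C^4$ and a small disc $\bD$,
\[
\cX:=\{(z,t)\in B\times\bD\ :\ F(z)=t\},\qquad \pi\colon\cX\to\bD,\quad(z,t)\mapsto t,
\]
and check that $\pi$ satisfies the requirements of Setup~\ref{setup} and is smooth over $\bD^*$: the total space $\cX$ is a hypersurface in $\C^5$, hence Gorenstein with $K_{\cX/\bD}=K_{\cX}\cong\cO_{\cX}$ by adjunction, and normal by Serre's criterion because its singular locus is the single point $(0,0)$, of codimension $4$; the central fiber $X_0=\{F=0\}$ is the original germ $\wh X$, which is terminal hence klt (and irreducible and reduced, being a normal germ); and for $0\neq t$ small $X_t=\{F=t\}$ is smooth, since $0$ is the only critical value of $F$ near $0$, and connected --- hence irreducible and reduced --- by the connectivity of the Milnor fiber of an isolated hypersurface singularity of ambient dimension $\ge 2$. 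Thus $\pi$ is a smoothing of $X_0=\wh X$, and $(\wh X,\wh x)$ is smoothable.

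The genuine content is Reid's classification, which is quoted rather than proved; everything else is bookkeeping. The only mildly delicate verifications are the normality of the total space $\cX$ and the irreducibility of the general fiber, together with the standard facts that the index-one cover of a terminal singularity is again terminal and that it is a finite Galois quasi-étale cover --- so I do not expect a real obstacle here.
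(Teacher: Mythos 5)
Your proof is correct and takes essentially the same route as the paper: quote Reid's classification, observe that a cDV singularity is an isolated hypersurface singularity in $\C^4$, deform the defining equation to produce a smoothing with smooth hypersurface total space in $\C^5$, and reduce the general terminal case to the Gorenstein one via the index-one cover. One small correction: your total space $\cX=\{F(z)-t=0\}$ is actually \emph{smooth} everywhere (the $\partial/\partial t$-derivative of $F(z)-t$ is identically $-1$), not a normal hypersurface with an isolated singularity at $(0,0)$; the appeal to Serre's criterion is therefore unnecessary, and indeed the paper records the total space as a smooth hypersurface of $\C^5$.
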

   
   Recall that a compound du Val singularity is a hypersurface singularity isomorphic to $(f+tg=0)\subset \mathbb C^3\times \C$ where $f,g\in \C[x,y,z]$ are such that $(f=0)\subset \C^3$ is a du Val surface singularity, cf e.g. \cite[\textsection~4.2 \& Definition~5.32]{KM}. Such a singularity can be smoothed out e.g. by $(f+tg+s=0)\subset \mathbb C^3\times \C\times \C$, where the total space is a smooth hypersurface of $\C^5$. 
   
   The second statement follows from the first after considering the quasi-étale index one cover $Y\to X$ making $K_Y$ Cartier \cite[Definition~5.19]{KM}.

  \subsection{Calabi-Yau threefolds with klt singularities}
  
  \begin{thm}
  Let $(X,\omega_X)$ be a normal compact Kähler space of dimension three with klt singularities such that $K_X\sim_{\Q} \mathcal O_X$. Then the Kähler-Einstein metric $\om_{\rm KE}\in [\omega_X]$ is a Kähler current. 
  \end{thm}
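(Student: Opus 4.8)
The plan is to follow the three-step strategy sketched in the introduction. \emph{First}, I would reduce to the Gorenstein case. Since $K_X\sim_{\Q}\mathcal O_X$, let $\sigma\colon X^\flat\to X$ be the index-one (canonical) cover: a finite quasi-étale cyclic cover with $X^\flat$ normal, connected, compact Kähler, and $K_{X^\flat}\sim\mathcal O_{X^\flat}$; being klt and Gorenstein, $X^\flat$ is canonical. As $\sigma$ is quasi-étale with $\sigma^*K_X=K_{X^\flat}$ we have $\sigma^*\mu_X=\mu_{X^\flat}$, so $\sigma^*\omega_{\rm KE}$ has bounded local potentials and solves $(\sigma^*\omega_{\rm KE})^n=c\,\mu_{X^\flat}$; by uniqueness it is the Ricci-flat metric of $X^\flat$ in the class $\sigma^*[\omega_X]$. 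If that current is a Kähler current on the \emph{compact} space $X^\flat$, then (any two Kähler metrics being comparable on a compact space, and $\sigma^*\omega_X$ being a bounded semipositive form) it dominates $c\,\sigma^*\omega_X$; pushing this inequality down over the dense open set where $\sigma$ is a local isomorphism and invoking Lemma~\ref{extension} gives $\omega_{\rm KE}\ge c\,\omega_X$ on $X$. So from now on I may and do assume $K_X\sim\mathcal O_X$ and $X$ canonical.

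\emph{Second}, I would pick a terminalization $\pi\colon\widehat X\to X$, i.e. a projective crepant bimeromorphic morphism from a normal compact Kähler space $\widehat X$ with terminal singularities, which exists by the minimal model program for compact Kähler threefolds. Since $X$ is canonical, $K_{\widehat X}=\pi^*K_X\sim\mathcal O_{\widehat X}$; since $\widehat X$ is a terminal threefold its singular locus is a finite set $\{q_1,\dots,q_s\}$, and these singularities are terminal of index one ($K_{\widehat X}$ being Cartier), hence compound du Val, hence smoothable by Reid's Theorem~\ref{Reid}. Fix a Kähler class $\widehat\beta$ on $\widehat X$ with representative $\widehat\omega$; for $\ep>0$ let $\omega_\ep=\theta_\ep+dd^c\varphi_\ep$ be the unique singular Ricci-flat metric (in the sense of \cite{EGZ09}) in the Kähler class $\alpha_\ep:=\pi^*[\omega_X]+\ep\widehat\beta$, where $\theta_\ep:=\pi^*\omega_X+\ep\widehat\omega$ and $\sup_{\widehat X}\varphi_\ep=0$. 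Because $\pi^*[\omega_X]$ is nef and big, the a priori estimates of \cite{EGZ09} yield a uniform bound $\|\varphi_\ep\|_{L^\infty(\widehat X)}\le C_0$, with $\varphi_\ep\to\varphi_0$ in $L^1$ and $\omega_\ep\to\pi^*\omega_{\rm KE}$ weakly as currents. For each fixed $\ep$, Theorem~\ref{thm:positivitylocale} applies to $(\widehat X,\alpha_\ep)$ — all singular points of $\widehat X$ being isolated and smoothable — and shows $\omega_\ep$ is a Kähler current near each $q_j$; together with smoothness of $\omega_\ep$ on $\widehat X_{\rm reg}$ this gives a purely \emph{qualitative} bound $\omega_\ep\ge c_\ep\widehat\omega$ on $\widehat X$, with $c_\ep$ possibly very small.

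\emph{Third} — this is the core of the argument — I would upgrade the previous bound into one uniform in $\ep$ by a Chern–Lu estimate on the singular space $\widehat X$, comparing $\omega_\ep$ with the \emph{fixed} semipositive form $\pi^*\omega_X$ rather than with $\theta_\ep$ (whose curvature degenerates as $\ep\to0$). Covering the compact set $X_{\rm sing}$ by finitely many charts in which $\omega_X$ is induced by a Euclidean Kähler form and using the Chern–Lu inequality for the corresponding holomorphic maps $\pi\colon\pi^{-1}(U^{(k)})\to\C^N$ (and, away from these charts, Proposition~\ref{Chern Lu} directly, $\pi$ being an isomorphism there), together with $\Ric\,\omega_\ep=0$ on $\widehat X_{\rm reg}$, one obtains a constant $C_3$ \emph{independent of} $\ep$ with $\Delta_{\omega_\ep}\log\mathrm{tr}_{\omega_\ep}(\pi^*\omega_X)\ge-2C_3\,\mathrm{tr}_{\omega_\ep}(\pi^*\omega_X)$ on $\widehat X_{\rm reg}$. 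Choosing $\psi\in\mathrm{PSH}(\widehat X,\widehat\omega)$ with $(\psi=-\infty)=\widehat X_{\rm sing}$ and $\psi$ smooth on $\widehat X_{\rm reg}$, the qualitative bound of Step~2 ensures $\mathrm{tr}_{\omega_\ep}(\pi^*\omega_X)$ is bounded near $\widehat X_{\rm sing}$, so for $\delta>0$ the smooth function $\log\mathrm{tr}_{\omega_\ep}(\pi^*\omega_X)+\delta\psi-A\varphi_\ep$ attains its maximum on $\widehat X_{\rm reg}$; running the maximum principle exactly as in the proof of Claim~\ref{non uniform} — using $-\Delta_{\omega_\ep}\varphi_\ep=\mathrm{tr}_{\omega_\ep}\theta_\ep-n\ge\mathrm{tr}_{\omega_\ep}(\pi^*\omega_X)-n$, $A=2C_3+1$, and taking $\delta$ small in terms of $\ep$ to absorb $\delta\,dd^c\psi$ — gives $\mathrm{tr}_{\omega_\ep}(\pi^*\omega_X)\le An$ at the maximum, hence, after the uniform bound on $\varphi_\ep$ and letting $\delta\to0$, a bound $\mathrm{tr}_{\omega_\ep}(\pi^*\omega_X)\le C$ with $C$ independent of $\ep$. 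Thus $\omega_\ep\ge C^{-1}\pi^*\omega_X$ on $\widehat X_{\rm reg}$, hence on $\widehat X$ by Lemma~\ref{extension}; passing to the weak limit $\ep\to0$ yields $\pi^*\omega_{\rm KE}\ge C^{-1}\pi^*\omega_X$ on $\widehat X$, which descends to $\omega_{\rm KE}\ge C^{-1}\omega_X$ over $X_{\rm reg}$ and then, once more by Lemma~\ref{extension}, on all of $X$: $\omega_{\rm KE}$ is a Kähler current.

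I expect the main obstacle to be precisely the Chern–Lu bootstrap of the third step: making the maximum-principle argument work on the singular space $\widehat X$ with the degenerate reference form $\pi^*\omega_X$, in particular confining the maximum to $\widehat X_{\rm reg}$ (which is exactly where the qualitative positivity coming from Theorem~\ref{thm:positivitylocale} enters) and checking that every constant produced — especially $C_3$ and $C_0$ — is genuinely independent of $\ep$. A secondary technical point is the uniform $L^\infty$-estimate for $\varphi_\ep$ in the degenerating Kähler class $\pi^*[\omega_X]+\ep\widehat\beta$, which relies on the nef-and-big a priori estimates of \cite{EGZ09}, and the verification that $\widehat X$ is a compact Kähler space, which follows from projectivity of $\pi$ over the Kähler base $X$.
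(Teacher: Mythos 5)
Your proposal is correct and follows essentially the same three-step strategy as the paper: index-one cover to reach the Gorenstein canonical case, crepant terminalization to produce isolated compound du Val (hence smoothable) singularities with a family $\omega_\ep$ of degenerating Ricci-flat metrics, and a Chern--Lu bootstrap that upgrades the qualitative positivity from Theorem~\ref{thm:positivitylocale} to an $\ep$-independent bound. The only cosmetic deviations are that you take $\psi\in\PSH(\widehat X,\widehat\omega)$ vanishing only on $\widehat X_{\rm sing}$ (forcing $\delta$ to be taken small in terms of $\ep$, whereas the paper pulls back a $\psi\in\PSH(X,\omega_X)$ so that the term $\delta\,dd^c\psi$ is absorbed into $\mathrm{tr}_{\omega_\ep}\pi^*\omega_X$ directly) and that you phrase the curvature input via Chern--Lu for the holomorphic map $\pi$ into $\C^N$ rather than as an upper bisectional bound for $\pi^*\omega_X$ on $\pi^{-1}(X_{\rm reg})$; both choices lead to the same estimate.
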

  
  \begin{proof}
  Let $p:Y\to X$ be the quasi-étale index one cover of $X$. Clearly, $p^*\om_{\rm KE}$ is the Kähler-Einstein metric in the Kähler class $p^*[\omega_X]$, so that if we prove the statement for $Y$, it will follow for $X$ by push-forward (since any Kähler form on $Y$ dominates a multiple of $p^*\omega_X$. 
  
  From now on, one can assume that $K_X\sim_{\Z} \mathcal O_X$ and, in particular, $X$ has canonical singularities. Let us consider a terminalization $\pi: \wh X \to X$ of $X$, cf \cite[Theorem~6.23]{KM}. The complex space $\wh X$ is Kähler and it has terminal singularities, hence isolated singularities. Moreover, one has $K_{\wh X}=\pi^*K_X$, i.e. $\pi$ is crepant (but in general, the exceptional locus of $\pi$ might have codimension one components). This implies that $K_{\wh X}\sim_{\Z} \mathcal O_{\wh X}$ so that $\wh X$ has smoothable singularities by Theorem~\ref{Reid}. Moreover, since $\pi$ is crepant, it is automatically isomorphic over $X_{\rm reg}$.

  We choose $\omh$ a Kähler metric on $\wh X$, and one considers the singular Ricci flat metric $\om_\ep \in [\pi^*\om_X+\ep \omh]$.  One can write $\om_\ep = \pi^*\om_X+\ep \omh+dd^c \vp_\ep$ with $\sup_{\wh X}\vp_\ep =0$, where 
  $\f_{\e}$ is a solution of 
  $(\pi^*\omega_X+\e \omh+dd^c \f_{\e})^n=c_{\e} \mu_{\hat{X}}=c_{\e} \hat{f} \omh^n$,
  where $c_\ep =\frac{[\pi^*\omega_X+\ep \omh]^n}{ \mu_{\wh X}(\wh X)}$ 
  and  $\hat{f} \in L^{1+\delta}(\wh X,\wh \om^n)$ with  $\delta>0$,   since $\hat{X}$ has terminal singularities. 
  It follows from the techniques in \cite{EGZ08,DemPal} that 
  \begin{equation}
  \label{C0 estimate}
  \|\vp_\ep\|_{L^{\infty}(\wh X)} \le C_0, \quad \mbox{and} \quad  \om_\ep \underset{\ep \to 0}{\longrightarrow} \pi^*\om_{\rm KE} \quad \mbox{weakly.}
  \end{equation} 
  where $C_0>0$ is independent of $\ep$. The discerning reader will have noticed that the situation is slightly different from the one in {\it loc. cit.} since $X$ is singular. To patch this little gap, one could e.g. appeal to \cite[Theorem~A]{DGG} applied to a desingularization of $\widetilde X\to \wh X$ with reference form the pull-back of $\pi^*\omega_X+\ep \omh$ to $\widetilde X$ and this would yield the uniform estimate. As for the stability statement, it is a classical consequence of the uniform estimate: one first gets higher order estimates locally on $X_{\rm reg}$ using Tsuji's trick and then one uses uniqueness of the Kähler-Einstein metric to conclude that the relatively compact family $(\varphi_\ep)_{\ep >0}$ has a single cluster value in $L^1(\wh X)$ which is nothing but $\varphi_0$. \\

  The main point to establish is that there is a constant $C>0$ independent of $\ep$ such that 
  \begin{equation}
  \label{main point}
  \om_\ep \ge C^{-1} \pi^*\omega_X
  \end{equation}
  Indeed, passing to the limit when $\ep \to 0$ would then imply that $\pi^*\om_{\rm KE}\ge C^{-1} \pi^*\om_X$ and the theorem would follow by pushing forward by $\pi$. 
  
  Now we know that inequality \eqref{main point} holds for any $\ep>0$ with a constant $C=C_\ep$ depending a priori from $\ep$, as this is the content of Theorem~\ref{thm:positivitylocale}. As before, we will use Chern-Lu inequality to make this qualitative control quantitative. \\

Choose a function $\psi\in \mathrm{PSH}(X,\omega_X)$ be such that $(\psi=-\infty)=X_{\rm sing}$ and $\psi|_{X_{\rm reg}}\in \mathcal C^{\infty}(X_{\rm reg})$. For any $\delta>0$, the smooth quantity
\[\log \mathrm{tr}_{\om_\ep}\pi^*\om_X+\delta \psi\]
on $\pi^{-1}(X_{\rm reg}) \subset \widehat X \setminus \mathrm{Exc}(\pi)$ tends to $-\infty$ near $\pi^{-1}(X_{\rm sing})$ since $\om_\ep$ is a Kähler form. 
In particular, the quantity above achieves its maximum on $X_{\rm reg}$.
 
Next, the bisectional curvature of $\pi^*\omega_X$ is well-defined and bounded above uniformly on $\pi^{-1}(X_{\rm reg})$, while $\Ric \,\om_\ep= 0$.  By Chern-Lu inequality
(Proposition~\ref{Chern Lu}) we find a constant $C_2>0$ such that  
\[\Delta_{\om_\ep} \left[\log \mathrm{tr}_{\om_\ep}\pi^*\om_X+\delta \psi\right] \ge  -C_2\mathrm{tr}_{\om_\ep}\pi^*\om_X \]
on $\pi^{-1}(X_{\rm reg})$. 
Since $-dd^c \varphi_\ep=-\om_\ep+\pi^*\om_X+\ep \omh$, we have 
\[-\Delta_{\om_\ep} \varphi_\ep \ge -n+\mathrm{tr}_{\om_\ep}\pi^*\om_X,\]
hence  
\[\Delta_{\om_\ep} \left[\log \mathrm{tr}_{\om_\ep}\pi^*\om_X+\delta \psi-(C_2+1)\varphi_\ep \right] \ge \mathrm{tr}_{\om_\ep} \pi^*\om_X-C_3 \]
where $C_3=-1+n(C_2+1)$. 
Using \eqref{C0 estimate} we obtain
in the end 
\[\mathrm{tr}_{\om_\ep}\pi^*\om_X \le C_4 e^{\delta(\sup_X \psi- \psi)}\]
 with $C_4=C_3e^{C_0(C_2+1)}$, holding on $\pi^{-1}(X_{\rm reg})$ for any $\delta>0$. Equivalently, we have $\om_\ep \ge C_4 e^{-\delta(\sup_X \psi- \psi)}\, \pi^*\om_X$. After passing to the limit when $\delta\to 0$, one can appeal to  Lemma~\ref{extension} to obtain \eqref{main point}, hence the theorem. 
  \end{proof}

  \subsection{Canonically polarized threefolds with klt singularities}
  
    \begin{thm}
    \label{negative}
  Let $X$ be a normal projective variety of dimension three with klt singularities such that $K_X$ is ample. Then the Kähler-Einstein metric $\om_{\rm KE}$ is a Kähler current. 
  \end{thm}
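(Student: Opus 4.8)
The plan is to run the same two‑step degeneration as in the Calabi--Yau threefold case above, the new feature being that a terminalization of $X$ is \emph{not} crepant when $K_X$ is ample, so a boundary divisor appears and the pair version of our local result, Theorem~\ref{klt pairs}, must replace Theorem~\ref{thm:positivitylocale}. First I would take a $\mathbb Q$‑factorial terminalization $\pi\colon \wh X\to X$ (it exists since $K_X$ ample forces $X$ projective): $\wh X$ is $\mathbb Q$‑factorial with terminal singularities and $K_{\wh X}+\wh\Delta=\pi^*K_X$ with $\wh\Delta=\sum_i(-a_i)E_i\ge 0$ supported on $\mathrm{Exc}(\pi)$, where $a_i=a(E_i,X)\in(-1,0]$. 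Then $(\wh X,\wh\Delta)$ is klt with $\wh\Delta$ of coefficients $<1$, each $E_i$ is $\mathbb Q$‑Cartier, $\wh X$ has only isolated singularities (terminal in dimension three), and these are $\mathbb Q$‑smoothable by Reid's classification, Theorem~\ref{Reid}; so $(\wh X,\wh\Delta)$ fits Setup~\ref{setup 3}. Moreover $\pi$ is an isomorphism over $X_{\rm reg}$, so $U:=\pi^{-1}(X_{\rm reg})\subset \wh X\setminus\mathrm{Exc}(\pi)$ is smooth and disjoint from $\mathrm{Supp}(\wh\Delta)$.

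Writing $\om_{\rm KE}=\om+dd^c\varphi_{\rm KE}$ with $\om=i\Theta(h_X)\in c_1(K_X)$ Kähler, so that $(\om+dd^c\varphi_{\rm KE})^n=e^{\varphi_{\rm KE}}\mu_{X,h_X}$, I would pull back to $\wh X$: since $\pi$ is crepant for the pair $(\wh X,\wh\Delta)$, the measures $\pi^*\mu_{X,h_X}$ and $\mu_{(\wh X,\wh\Delta),\pi^*h_X}$ coincide (both are klt, charge no pluripolar set, and agree over $X_{\rm reg}$), and with a smooth hermitian metric $h$ on $K_{\wh X}+\wh\Delta$ this equals $e^{F}\mu_{(\wh X,\wh\Delta),h}$, $F\in\mathcal C^\infty(\wh X)$. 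To regularize the degenerate class I fix a Kähler metric $\omh$ on $\wh X$, set $\om_\ep:=\pi^*\om+\ep\omh$ and let $\varphi_\ep\in\mathrm{PSH}(\wh X,\om_\ep)\cap L^\infty(\wh X)$ solve $(\om_\ep+dd^c\varphi_\ep)^n=e^{\varphi_\ep+F}\mu_{(\wh X,\wh\Delta),h}$ (\cite{EGZ09}). A uniform bound $\|\varphi_\ep\|_{L^\infty}\le C_0$ follows from \cite[Theorem~C]{GZ11} on a log resolution, and then the local estimates below plus uniqueness of $\varphi_{\rm KE}$ give $\eta_\ep:=\om_\ep+dd^c\varphi_\ep\to\pi^*\om_{\rm KE}$ weakly. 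For each fixed $\ep$, Theorem~\ref{klt pairs} applied to $(\wh X,\wh\Delta,\om_\ep)$ (with $\lambda=1$) shows $\eta_\ep$ is a Kähler current, a priori with an $\ep$‑dependent constant.

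The core of the argument is to make this bound uniform in $\ep$, exactly along the lines of Claim~\ref{non uniform} and the Calabi--Yau threefold proof. On $U$, $\eta_\ep$ is smooth Kähler and the Monge--Amp\`ere equation gives $\Ric \, \eta_\ep=-\eta_\ep+\ep\,\omh\ge-\eta_\ep$, while $\mathrm{Bisec}_{\pi^*\om}$ is bounded above uniformly in $\ep$ (pulled back from $X_{\rm reg}$). Picking $\psi_X\in\mathrm{PSH}(X,\om)$ with $\{\psi_X=-\infty\}=X_{\rm sing}$ and $\psi_X|_{X_{\rm reg}}$ smooth, $\psi:=\pi^*\psi_X$, and a fixed $A>0$, Chern--Lu (Proposition~\ref{Chern Lu}) gives on $U$, with constants independent of $\ep$,
\[\Delta_{\eta_\ep}\big(\log\mathrm{tr}_{\eta_\ep}\pi^*\om-A\varphi_\ep+\delta\psi\big)\ \ge\ (1-\delta)\,\mathrm{tr}_{\eta_\ep}\pi^*\om-C_1\qquad(0<\delta\le\tfrac12).\]
The qualitative positivity from the previous step forces $\log\mathrm{tr}_{\eta_\ep}\pi^*\om$ bounded above on $U$ and the whole quantity $\to-\infty$ towards $\wh X\setminus U=\pi^{-1}(X_{\rm sing})$, so the maximum principle on $U$ yields $\mathrm{tr}_{\eta_\ep}\pi^*\om\le C_2 e^{\delta(\sup\psi-\psi)}$, i.e.\ $\eta_\ep\ge C_2^{-1}e^{-\delta(\sup\psi-\psi)}\pi^*\om$ on $U$, with $C_2$ independent of $\ep$ and $\delta$. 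Letting $\delta\to0$ and extending across $\pi^{-1}(X_{\rm sing})$ by Lemma~\ref{extension} gives $\eta_\ep\ge C_2^{-1}\pi^*\om$ on $\wh X$ uniformly; passing to the weak limit and pushing forward by $\pi$ yields $\om_{\rm KE}\ge C_2^{-1}\om$ on $X$, so $\om_{\rm KE}$ is a Kähler current.

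I expect the main obstacle to be organizational rather than a single inequality: verifying that $(\wh X,\wh\Delta)$ genuinely lands in Setup~\ref{setup 3} (notably $\wh\Delta$ being $\mathbb Q$‑Cartier with coefficients $<1$ and the isolated terminal singularities of $\wh X$ being $\mathbb Q$‑smoothable, via Theorem~\ref{Reid}), that $\pi^*\mu_X$ is exactly the klt‑pair measure of $(\wh X,\wh\Delta)$ up to a smooth factor, and — most importantly — that every Chern--Lu constant remains genuinely independent of $\ep$; the extra term $\ep\,\omh$ in $\Ric \, \eta_\ep$ is harmless since it only improves the Ricci lower bound, which is exactly why, once routed through Theorem~\ref{klt pairs}, the negative case is no harder than the Calabi--Yau one.
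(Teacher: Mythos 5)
Your proposal follows essentially the same route as the paper: pass to a $\mathbb Q$-factorial terminalization $(\wh X,\wh\Delta)$ via \cite[Corollary~1.4.3]{BCHM}, observe that $\wh X$ has isolated $\Q$-smoothable singularities by Reid and that $(\wh X,\wh\Delta)$ falls under Setup~\ref{setup 3}, regularize the degenerate class $\pi^*[\om_X]$ by $\ep[\omh]$, invoke Theorem~\ref{klt pairs} for the $\ep$-dependent positivity, and then run Chern--Lu with the barrier $\delta\psi$ to remove the $\ep$-dependence. The only place you gloss over something the paper treats as a real (if classical) point is the uniform $L^\infty$ bound for $\varphi_\ep$: since $\lambda=1$ removes the $\sup\varphi_\ep=0$ normalization, the paper first establishes $\sup_{\wh X}\varphi_\ep\le C$ independently via Jensen's inequality and compactness of quasi-psh functions on a resolution, and only then appeals to family a priori estimates (in the spirit of \cite[Theorem~A]{DGG}) for the two-sided bound; your blanket citation of \cite[Theorem~C]{GZ11} is the reference used in the paper for Theorem~\ref{klt pairs}, where the Kähler class is \emph{fixed} and only the density is regularized, whereas here the class itself degenerates as $\ep\to0$, so that reference does not immediately apply and the extra sup-bound argument is the correct way to close the step.
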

  
  \begin{proof}
  The reduction to the terminal case is a bit more involved here. More precisely, we cannot easily assume that $X$ has canonical singularities without resorting to a local argument which would collapse since the singularities of $X$ are not isolated. Instead, we use a $\Q$-factorial terminalization  $\pi:\wh X\to X$ whose existence (in any dimension) is guaranteed by \cite[Corollary~1.4.3]{BCHM}. The map $\pi$ is such that there exists an effective divisor $\wh \Delta$ on $\wh X$ such that $K_{\wh X}+\wh \Delta=\pi^*K_X$ and $(\wh X, \wh \Delta)$ is a terminal pair, hence $\wh X$ has isolated $\Q$-smoothable singularities by Theorem~\ref{Reid}. Moreover, $\wh X$ is $\Q$-factorial. 
  
  Next, let $\omh$ be a Kähler metric on $\wh X$ and let us consider the twisted Kähler-Einstein metric $\om_\ep \in c_1(\pi^*K_X)+\ep [\omh]$, i.e. the solution of 
  \[\Ric \, \om_\ep = -\om_\ep+\ep \omh+[\wh \Delta].\]
  If $\om_X\in c_1(K_X)$ is a Kähler metric, say the curvature of an hermitian metric $h$ on $K_X$, one can write $\om_\ep= \pi^*\om_X+\ep \omh+dd^c \vp_\ep$ where 
  $\f_{\e}$ is a solution of 
  \[(\pi^*\omega_X+\e \omh+dd^c \f_{\e})^n=e^{\f_{\e}} \mu_{(\hat{X},\wh \Delta), \pi^*h}=e^{\f_{\e}} \hat{f} \omh^n,\]
  with $\hat{f} \in L^{1+\delta}(\wh X,\wh \om^n)$ with $\delta>0$  since $\hat{X}$ has terminal singularities.
As in the Calabi-Yau case, one has
    \begin{equation}
  \label{C0 estimate 2}
  \|\vp_\ep\|_{L^{\infty}(\wh X)} \le C, \quad \mbox{and} \quad  \om_\ep \underset{\ep \to 0}{\longrightarrow} \pi^*\om_{\rm KE} \quad \mbox{weakly.}
  \end{equation} 
  where $C>0$ is independent of $\ep$. The only difference with the Calabi-Yau case is that we first need to show that there exists $C>0 $ independent of $\ep$ such that 
  \begin{equation}
  \label{borne sup}
  \sup_{\wh X}\vp_\ep \le C
  \end{equation}
This is certainly classical, but we recall the argument for the reader's convenience. First, one can assume without loss of generality that $\mu:=\mu_{(\hat{X},\wh \Delta), \pi^*h}$ is a probability measure. By Jensen's inequality, we infer that 
  \[\int_{\wh X} \vp_\ep d\mu \le \log \int_{\wh X}(\pi^*\omega_X+\e\wh \omega)^n \le C_{1}\]
  for some $C_{1}>0$ independent of $\ep$.  Let $p:\widetilde X\to \widehat X$ be a resolution of singularities and let $\omega_{\widetilde X}$ be a Kähler metric on $\widetilde X$. Up to scaling the latter metric, one can assume that $p^*(\pi^*\omega_X+\e \omh)\le \omega_{\widetilde X}$ and $p^*\mu\le \omega_{\widetilde X}^n$ where the latter follows since $\wh X$ has terminal (hence canonical) singularities. By the usual compactness properties of $\omega_{\widetilde X}$-psh functions, we find a constant $C_{2}$ such that 
 \[ \int_{\widetilde X}(\sup_{\widetilde X}\psi-\psi)\om_{\widetilde X}^n \le C_{2}\]
 for any $\psi\in \mathrm{PSH}(\widetilde X, \om_{\widetilde X})$. Now, 
 \begin{align*}
 \sup_{\wh X} \vp_\ep &= \int_{\wh X}(\sup_{\wh X} \vp_\ep-\vp_\ep)d\mu+\int_{\wh X} \vp_\ep d\mu\\
 & \le \int_{\wh X}(\sup_{\widetilde X} p^*\vp_\ep-p^*\vp_\ep)\om_{\widetilde X}^n+C_1\\
 & \le C_1+C_2 
 \end{align*}
 since $p^*\vp_\ep \in \mathrm{PSH}(\widetilde X, \om_{\widetilde X})$. Hence \eqref{borne sup} follows.

Coming back to proof of  the main result, the remaining point to establish is that there is a constant $C>0$ independent of $\ep$ such that 
  \begin{equation}
  \label{main point 2}
  \om_\ep \ge C^{-1} \pi^*\omega_X.
  \end{equation}
  Thanks to Theorem~\ref{klt pairs}, inequality \eqref{main point 2} holds for any $\ep>0$ with a constant $C=C_\ep$ depending a priori from $\ep$. In order to make this bound quantitative, the exact same strategy as in the Calabi-Yau case applies.   
  \end{proof}

\section{Appendix: Gradient and Laplacian estimates in families} \label{sec:Appendix}

In this Appendix we  provide the proofs of the family version of the gradient estimate (Proposition~\ref{prop gradient estimates}) and the Laplacian estimate (Theorem~\ref{thm laplacian estimates}) as a courtesy to the reader.

\subsection{Gradient estimate in families}

Let us consider the solution $u_t$ of \eqref{MA}, and recall that $f:{\mathcal Y} \rightarrow {\mathcal X}$ be a log resolution of singularities after base change. We recall that $\mathcal Y$ is smooth and that $E:=\mathrm{Exc}(f)\subset \mathcal Y$ is a divisor satisfying $E\subset Y_0$ since we assume that $\pi$ is smooth outside of $0\in X_0$.  Moreover, the map $f_t:Y_t\to X_t$ is a finite étale cover of constant degree for any $t\in \mathbb D^*$. Up to picking a connected component of $Y_t$, there is no loss of generality in assuming that the latter map is actually isomorphic. 

Recall that the function $\rho'=f^*(A\rho)+\sum_{i\in I} b_i \log|s_i|^2_{h_i}$ is strictly psh on ${\mathcal Y}$,
where $E_i=(s_i=0)$ are the irreducible components of the exceptional locus $E$ of $f$. 

Without loss of generality, one can assume that the positive numbers $b_i$ satisfy $\max_i b_i \le 1$ and that $\log |s_i|^2_{h_i} \le 0$ for any $i$.  Next, one can write 
\[dd^c \rho'=\omega+ \sum b_i[E_i]\] 
so that $\omega$ is a Kähler form on $\mathcal Y$. Setting 
\[v:=f^*(u - A\rho)-\sum_i b_i \log|s_i|^2_{h_i},\] 
we see that the following equation holds on $Y_t$ for any $t\neq 0$:
\[
\begin{cases}
(\omega_t+dd^c v_t)^n = (dd^c f^*u_t)^n= 
e^{\lambda f^*u_t +f^*F_t } f^* \mu_t=\frac{
e^{\lambda v_t+G_t}}{\prod_i |s_i|^{2(a_i-b_i)}} \omega_t^n \\ 
{v_t}|_{\partial Y_t}=(f^*h-\sum b_i \log |s_i|^2_{h_i})|_{\partial Y_t}
\end{cases}
\]
with $G$ smooth on $\cY$, and $a_i <1$ (not necessarily positive) as explained in \textsection~\ref{sec:uniformlocal}. In the following, one sets $|s|^2:=\prod_{i\in I} |s_i|^2_{h_i}$.

 \begin{prop}  
 \label{prop gradient estimates}
 In Setup~\ref{setup}, we assume that $\pi$ is smooth outside of the basepoint $0\in X_0$ and we let $u_t$ be the solution of \eqref{MA}. 
 There exist constants $C>0$ and $N>0$ such that  for all $t\neq 0$,
 \begin{equation}
\| |s|^N \nabla_{\omega_t} v_t\|_{L^{\infty}(Y_t)}\le C.
\end{equation}
for some constant $C>0$ independent of $t$.
 \end{prop}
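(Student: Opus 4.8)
The plan is to adapt the classical interior/boundary gradient estimate for the complex Monge-Ampère equation (as in \cite{GL10,CKNS85}) to the degenerate family setting, keeping careful track of the dependence on the defining section $s$ of the exceptional divisor $E$ and on $t \in \mathbb D^*$. The key observation is that on $Y_t$ (with $t \neq 0$) we have the equation $(\omega_t+dd^c v_t)^n = e^{\lambda v_t + G_t}|s|^{-2\sum(a_i-b_i)}\,\omega_t^n$ with $G$ smooth on $\cY$, $a_i < 1$, $0 < b_i \le 1$; since each $a_i - b_i < 1$, the right-hand side blows up at worst like $|s|^{-2\sigma}$ for some fixed $\sigma < 1$, and this exponent is controlled \emph{uniformly} in $t$. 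The boundary data is smooth and uniformly bounded in $\mathcal C^k(\partial Y_t)$ (because $s$ is non-vanishing near $\partial \cY$, as $E \subset Y_0$), so Lemma~\ref{lem:gradientbord} together with Proposition~\ref{prop gradient} already gives uniform control of $\|v_t\|_{\mathcal C^1(V_t)}$ near the boundary; what remains is the interior estimate away from $\partial Y_t$, degenerating only along $E$.

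First I would introduce the Blocki–Guan–Phong type test function. Set $\alpha := |s|^{2N}|\nabla_{\omega_t} v_t|^2_{\omega_t}$ for a large exponent $N$ to be fixed, and consider $w := \log \alpha + \gamma(v_t)$ where $\gamma$ is a suitable convex increasing function of the (uniformly bounded) quantity $v_t$, chosen as in the classical argument to absorb the bad zero-order terms. Since $v_t$ is bounded and smooth on $Y_t \setminus E$ and $|s|^{2N}|\nabla v_t|^2 \to 0$ along $E$ (the potential $v_t$ having at worst logarithmic gradient blow-up there, killed by the weight once $N$ is large), the function $w$ attains its maximum at an interior point $y_0 \in Y_t \setminus E$, or else on $\partial Y_t$ where we already have the bound. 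At an interior maximum I would compute $\Delta_{\omega_{v_t}} w \le 0$, where $\omega_{v_t} = \omega_t + dd^c v_t$, differentiating the Monge-Ampère equation once to produce the third-order terms. The standard miracle is that the third-order terms coming from $\Delta_{\omega_{v_t}}\log|\nabla v_t|^2$ and those from differentiating $\log\det$ partially cancel; the leftover terms are controlled using the bisectional curvature bound of $\omega_t$ (uniform in $t$, since $\omega_t$ is the restriction of a fixed form under local embeddings) and the lower Ricci bound, which here reads $\Ric\,\omega_t \ge -C\omega_t - dd^c\big(\sum(a_i-b_i)\log|s_i|^2\big)$, i.e.\ the curvature of the weight $|s|^{-2\sigma}$. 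Crucially, the extra terms produced by $dd^c\log|s|^{2N}$ in $\Delta_{\omega_{v_t}} w$ are $\omega_{v_t}$-traces of a fixed smooth form, hence bounded by $C\,\mathrm{tr}_{\omega_{v_t}}\omega_t$, and by choosing $\gamma$ with large enough derivative we dominate them. This yields $\mathrm{tr}_{\omega_{v_t}}\omega_t(y_0) \le C$ and then $\alpha(y_0) \le C$, which gives the desired bound $\||s|^N\nabla_{\omega_t}v_t\|_{L^\infty(Y_t)} \le C$ with $C, N$ independent of $t$.

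The main obstacle I anticipate is twofold. First, one must verify carefully that the weight exponent $N$ can be chosen \emph{uniformly} in $t$: this requires that the singular exponents $a_i - b_i$ appearing in the equation, and the order of the logarithmic gradient blow-up of $v_t$ along $E_i$, are bounded independently of $t$ — which follows from the fixed semistable model $\cY$ and the fact that $E \subset Y_0$, so that all the geometry near $E$ is modeled on the single central fiber. Second, in the cancellation of third-order terms one must be scrupulous that every constant that enters (via the bisectional curvature of $\omega_t$, via $\|G\|_{\mathcal C^2}$, via the $b_i$ and the metrics $h_i$) is a constant on the total space $\cY$ restricted to fibers, hence uniform; this is where the family formulation genuinely differs from the classical single-manifold statement and where the bookkeeping is delicate but, I expect, routine once set up on $\cY$. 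A secondary technical point is to justify rigorously that $w$ does not attain its supremum on $E$ — this is handled by the a priori (non-uniform, $t$-dependent) smoothness of $v_t$ on $Y_t\setminus E$ from Theorem~\ref{thm:ggz&co}/\cite{GL10} together with an explicit local estimate showing $|\nabla v_t| = O(|s|^{-1+\eta})$ near $E$, so that $|s|^{2N}|\nabla v_t|^2 \to 0$ for $N$ large.
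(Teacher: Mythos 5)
Your overall strategy coincides with the paper's: both you and the authors consider a weighted logarithmic gradient test function of the form $\log|\nabla_{\omega_t} v_t|^2_{\omega_t} + k\log|s|^2 \pm \gamma(v_t)$, invoke the Blocki--Phong--Sturm third-order identity for $\Delta_\theta \log|\nabla v|^2$, and close by the maximum principle using the uniform $L^\infty$ bound on $v_t$, the uniform upper bound on the bisectional curvature of $\omega_t$ (restriction of a fixed form on $\cY$), and the boundary control from Lemma~\ref{lem:gradientbord}. Your observation that the singular exponents $a_i-b_i$ are fixed once and for all on the semi-stable model, so the weight exponent $N$ can be chosen uniformly in $t$, is also exactly the right reason the scheme can work in families.

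There is, however, a genuine gap at the heart of the computation. When you write the critical equation $\nabla\log\beta = -\gamma'(v)\nabla v - k\,\nabla\log|s|^2$ at the interior maximum and expand the Phong--Sturm term $|a+b|^2_\theta$, and when you differentiate the density $e^{\lambda v_t + f_t}$ (which involves $\sum (a_i-b_i)\log|s_i|^2$, so $|\nabla_{\omega_t} f_t|\lesssim |s|^{-1}$), you inevitably produce error terms of the shape $\frac{k}{\beta|s|^2}\mathrm{tr}_\theta\omega$, $\frac{k}{\beta|s|^4}$, and $\frac{C}{\sqrt{\beta}\,|s|}$. These are \emph{not} ``$\omega_\theta$-traces of a fixed smooth form'': they blow up when $\beta|s|^2$ is small at the maximum point, and no choice of $\gamma$ (large derivative or not) absorbs them, nor does the weight $|s|^{2N}$ by itself. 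The paper resolves this by a dichotomy (Claim~\ref{claim gradient}): either $\beta(x_0)|s(x_0)|^{2p}\ge 1$ at the interior maximum, for a chosen $p$ with $2\le p\le k$, in which case the dangerous terms are uniformly bounded and the maximum principle closes; or else $\beta(x_0)|s(x_0)|^{2p}<1$, in which case the quantity $\log\beta + N\log|s|^2$ is bounded above \emph{directly} by comparison with $\alpha(x_0)$ for $N=k+A$. Without this dichotomy the maximum-principle inequality leaves uncontrolled singular factors, and the proof does not close.

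Two smaller points. Citing Proposition~\ref{prop gradient} for the near-boundary control is circular: that statement is deduced from the one you are proving, so only Lemma~\ref{lem:gradientbord} is legitimately available (and it suffices, since $|s|$ is bounded away from zero on $\partial\cY$). Your worry that $w$ might attain its supremum on $E$ is moot: for $t\neq 0$ one has $E\cap Y_t=\emptyset$ because $\pi$ is smooth away from the basepoint, so the supremum is attained in the interior of $Y_t$ or on $\partial Y_t$; the role of the weight is to provide uniformity as $t\to 0$, not to exclude a singular set inside a single fiber. Finally, with your sign convention $w=\log\alpha+\gamma(v_t)$ the correction $\gamma$ should be decreasing and convex rather than ``convex increasing''; the paper's $\gamma(x)=A\bigl(x-\frac{1}{x+B}\bigr)$ is increasing and concave, matching its sign $-\gamma(v)$ in the test function.
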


 \begin{proof}
 This is a family version of \cite[Proposition 2.2]{DFS21},
 which itself is an extension of previous similar estimates of Blocki \cite{Blocki09}
 and Phong-Sturm \cite{PS12}.
 
 In order to increase the readability of the proof, we will soon skip the subscripts $t$ appearing in all the quantities involved in our problem, but before we do that, let us emphasize explicitly what are the uniform bounds that we will later rely on. In order to do so, one rewrites the equation solved by $v_t$ as 
 \[(\om_t+dd^c v_t)^n=e^{\lambda v_t+f_t}\om_t^n\]
 and we have uniform constants $C_i$ (independent of $t\in \mathbb D^*$) such that one has the a priori estimates
 \begin{equation}
 \label{estimates}
 \|v_t+\sum_i b_i \log |s_i|^2_{h_i}\|_{L^{\infty}(Y_t)} \le C_1, \quad  \|\nabla_{\omega_t} v_t\|_{L^{\infty}(\partial Y_t)} \le  C_2, \quad \big|\nabla_{\omega_t} f_t\big|\le \frac{C_3}{|s|}
 \end{equation}
 as well as the following inequalities 
\begin{equation}
\log |s|^2 \le C_4, \quad |\nabla_\omega |s|^2|^2 \le C_5, \quad dd^c \log |s|^2\ge -C_6\omega, \quad  |\mathrm{Bisec}_{\om_t} |\le C_7
 \end{equation}
 holding on $Y_t$ for all $t\neq 0$. The a priori estimates \eqref{estimates} have already been established, cf \textsection~\ref{sec:uniform estimate} and Lemma~\ref{lem:gradientbord}. From now on, we drop the subscript $t$. \\

 Define $\beta := \vert \nabla v \vert^2_\omega$ and $\alpha := \log \beta - \gamma (v) + k \log |s|^2$, where $k \in \mathbb N_{>0}$ is number that we will fix later (we will actually see that any number $k\ge 2$ is suitable) and $\gamma$ is of the following form
\[ \gamma (t) = A (t - \frac{1}{t + B}),\]
 where $A, B > 0$ are constants to be chosen later.  We should however observe already that the first estimate in \eqref{estimates} shows that for $B:=C_1+1$, one has
 \begin{equation}
 \label{gamma}
-A(C_1+1) \le \gamma(v) \le  A(-\log |s|^2+C_1)
 \end{equation}
 as well as 
  \begin{equation}
 \label{gamma1}
A\le \gamma'(v) \le 2A, \quad \mbox{and} \quad  - \gamma''(v) \ge \frac{2A}{(2B-\log |s|^2)^3}.
 \end{equation}

 \medskip
 
 The function $\alpha$ attains its maximum at a point $x_0 \in \overline{Y_t} $. We can assume that $x_0 \in Y_t$, otherwise we are done by \eqref{estimates}.
 We choose normal complex coordinates $(z_1, \cdots, z_n)$ centred at $x_0$ so that $\omega_{x_0} = \sum dz_j \wedge d \bar z_j$ and 
 $\theta := \omega + dd^c v = \sqrt{-1} \sum_{i,j}  \theta_{i \bar j} d z_i \wedge d \bar z_j$ is diagonal at $x_0$ i.e.
 $ \theta_{i \bar j} (x_0) = \delta_{i j} \theta_{i \bar i}.$
 
 \medskip
 
 \noindent
 By \cite[eq. (1.14)]{Blocki09} (cf. also \cite[eq. (2.7)]{PS12}) we have 
 \begin{eqnarray*}
&& \Delta_{\theta} (\log \beta - \gamma(v)) = \frac{1}{\beta}\left\{\vert \nabla \nabla v\vert^2_{\omega,\theta}  +  \vert \overline \nabla \nabla    v \vert^2_{\omega,\theta} +  2 \Re (\partial^p (\lambda v+ f) \, \partial _p v)\right\} \\
     \\
 && + \frac{1}{\beta}\left\{\theta^{k \bar l} \partial_p v {R^p_{k \bar l}}^{\bar m} \partial_{\bar m} v \right\}   - \frac{1}{\beta^2} \vert \nabla \beta\vert^2_{\theta}  - \gamma''(v) \vert \nabla v\vert^2_{\theta} - \gamma'(v) \Delta_\theta v.
 \end{eqnarray*}
 We have the following easy estimates 
 \begin{enumerate}[label=$\bullet$]
 
 \item  $ \theta^{k \bar \ell } \partial_p v {R^p_{k \bar \ell l}}^{\bar m} \partial_{\bar m} v \ge ( \inf \mathrm{Bisec}_{\omega}) \,\theta^{k \bar \ell} \omega_{k\bar \ell} \omega^{p \bar m} \partial_p v \partial_{\bar m} v  \ge - C_7 \beta \, \text{tr}_{\theta} \omega,$
  \item   $   \partial^p \lambda v \, \partial _p v =\lambda \beta,$

 \item   $  \vert \partial^p f\, \partial _p v\vert \le \vert \nabla_\omega  f\vert \sqrt{\beta} \le C_3 \sqrt{\beta} \vert s\vert^{-1},$
  
  \item   $  - \Delta_\theta v = - n + \text{tr}_{\theta} \omega. $
    \end{enumerate}
    On the other hand, since $\Delta_\theta \log \vert s\vert^2 \ge - C_6\,   \text{tr}_{\theta} \omega$, it follows that
    \[    \Delta_\theta \alpha \ge  \Delta_\theta (\log \beta - \gamma (v))  - k C_6\,   \text{tr}_{\theta} \omega.\]
    All in all, we get

   \begin{eqnarray}
   \label{preparation}
     \Delta_\theta \alpha & \ge & \left(\frac{\vert \nabla \nabla v\vert^2_{\omega,\theta}}{\beta} -\left| \frac{\nabla \beta}{\beta}\right|^2_\theta\right)- \gamma''(v) |\nabla v|^2_\theta\\
     &&-\frac{2C_3}{\sqrt \beta |s|}+(\gamma'-kC_6-C_7)\mathrm{tr}_\theta\omega-n\gamma' \nonumber
   \end{eqnarray}
   
   \medskip 
   
   \noindent
    We now deal with the first summand of the RHS of \eqref{preparation}. At the point $x_0 \in Y_t$ where $\alpha$ achieves its maximum, we have the critical equation
    \[    \frac{\nabla \beta}{\beta} - \gamma'(v) \nabla v + k \frac{\nabla \vert s\vert^2}{\vert s\vert^2} = 0.\]
    On the other hand, by  \cite[eq. (1.15)]{Blocki09} (cf. also \cite[eq. (2.12)]{PS12}) we have,
    \[    \frac{1}{\beta}\vert \nabla \nabla v\vert^2_{\omega, \theta}\ge \left\vert \frac{\nabla \beta}{\beta}  - \frac{1}{\beta} \nabla v\cdot h+ \frac{1}{\beta} \nabla v\right\vert_\theta^2,\]
    
    where $h$ is the smooth section of $\mathrm{End}(T_{Y_t})$ defined by $\theta$ via the metric $\omega$ i.e. locally $h^i_j = \omega^{ i  \bar \ell}\theta_{\bar \ell j}$.    Set 
    \[a := \gamma'(v) \nabla v - k \frac{\nabla \vert s\vert^2}{\vert s\vert^2} \underset{\mathrm{at} \, x_0}{=}  \frac{\nabla \beta}{\beta},\] 
    and 
    \[  b:= \frac{1}{\beta} (\nabla v - \nabla v \cdot h). \]
    Then at the point $x_0$ we have
    \begin{eqnarray}
   \label{PS-1}  \frac{1}{\beta} \vert \nabla \nabla v\vert^2_{\omega, \theta} \ge  \vert a+b \vert^2_{\theta} &\ge & \vert a\vert^2_{\theta} + 2 \Re \langle a, b\rangle_{\theta} \\
     & = &  \left\vert \frac{\nabla \beta}{\beta} \right\vert^2_{\theta} +  2 \Re \langle a, b\rangle_{\theta}\cdot \nonumber
    \end{eqnarray}
    Now we analyze the term $\langle a, b\rangle_{\theta}$. At the point $x_0$ we have 
    \begin{enumerate}[label=$\bullet$]
    \item 
     $ \langle \nabla v, \nabla v \cdot h\rangle_{\theta}  =  \theta^{i \bar j} \nabla_i v \nabla_{\bar \ell} v \, \omega^{\bar \ell p} \theta_{p \bar j}
       = \omega^{p \bar p} \nabla_p v \nabla_{\bar p} v = \beta$
    
    \item $\langle \nabla \vert s\vert^2, \nabla v \cdot h\rangle_{\theta} =   \theta^{i \bar j} \nabla_i \vert s\vert^2  \nabla_{\bar \ell} v \,  \omega^{\bar \ell p} \theta_{p \bar j}  = \theta^{i \bar i} \nabla_i \vert s\vert^2  \nabla_{\bar i} v\,  \theta_{i \bar i}  = \langle \nabla \vert s\vert^2, \nabla v \rangle_{\omega} $
    \end{enumerate}
  Therefore we conclude that at $x_0$, we have
  \begin{eqnarray}
\label{PS0}  \langle a, b\rangle_{\theta}& = & \frac{\gamma'(v)}{\beta} \left(\vert \nabla v\vert^2_{\theta} - \beta\right)+ \\
  &&+  \frac{k}{\beta \vert s\vert^2} \left(\langle \nabla \vert s\vert^2, \nabla v \rangle_{\theta} - \langle \nabla \vert s\vert^2, \nabla v \rangle_{\omega}\right) \nonumber
  \end{eqnarray}
    Observe that 
\[   \left\vert \nabla \vert s\vert^2 \right\vert^2_{\theta} = \theta^{i \bar i} \left\vert \nabla_i \vert s\vert^2\right\vert^2 \le  \vert \nabla_\omega \vert s\vert^2\vert^2\,  \text{tr}_{\theta} \omega  \le C_5 \, \text{tr}_{\theta} \omega,\]  
    so that
\begin{equation}
\label{PS1}
2  \left\vert \langle \nabla \vert s\vert^2, \nabla v \rangle_{\theta}\right\vert  \le C_5\, \text{tr}_{\theta} \omega + \vert \nabla v\vert_{\theta}^2.
  \end{equation}
    By Cauchy-Schwarz inequality again, we have 
   \begin{eqnarray}
  \label{PS2}   \frac{2}{\beta \vert s\vert^2} \left\vert\langle \nabla \vert s\vert^2, \nabla v \rangle_{\omega}\right\vert &\le &\left\vert \nabla \vert s\vert^2 \right\vert^2_{\omega} +  \frac{1}{(\beta \vert s\vert^2)^2}\vert \nabla v \vert^2_{\omega} \\
     & \le & C_5+ \frac{1}{\beta \vert s\vert^4}. \nonumber
    \end{eqnarray}
    Combining \eqref{PS-1}-\eqref{PS0} with \eqref{PS1}-\eqref{PS2}, we get at $x_0$

    \begin{equation}
\label{PS3} 
\frac1 \beta \vert \nabla \nabla v\vert^2_{\omega,\theta} -\left| \frac{\nabla \beta}{\beta}\right|^2_\theta  \ge - \gamma'(v) - \frac{k}{\beta \vert s\vert^2} \left( C_5 \text{tr}_{\theta} \omega + \vert \nabla v\vert_{\theta}^2\right) - k (C_5+  \frac{1}{\beta \vert s\vert^4}).
\end{equation}

\bigskip

On can now put together all our estimates. More precisely, if we combine \eqref{preparation} with \eqref{PS3}, the maximum principle yields the following inequality at $x_0$
    \begin{eqnarray*}
0 & \ge & \left(\gamma'(v) - (kC_6+C_7) - \frac{kC_5}{\beta \vert s \vert ^2}\right) \text{tr}_{\theta} \omega + \left(- \gamma''(v) - \frac{k}{\beta \vert s\vert^2}\right) \vert \nabla v\vert^2_{\theta} \\
&&-\left((n+1) \gamma'(v) + k\big(C_5+ \frac{1}{\beta \vert s\vert^4} \big)+  \frac{2C_3}{\sqrt{\beta}|s| }\right).
  \end{eqnarray*}
  
  \begin{claim}
  \label{claim gradient}
  Given some number $1\le p\le k$, one can assume that at the point $x_0$ we have $\beta (x_0) \vert s(x_0)\vert^{2 p} \ge 1$. 
  \end{claim}
  
  \begin{proof}[Proof of Claim~\ref{claim gradient}]
  Assume that $\beta (x_0) \vert s(x_0)\vert^{2 p} \le 1$, and consider the function $G:=\log\beta+N\log |s|^2=\alpha+\gamma(v)+(N-k)\log |s|^2$ for some $N$ to fix later. We want to show that for some suitable $N$, $G$ is uniformly bounded above. We have
  \begin{eqnarray*}
  G & \le & \alpha(x_0)+\gamma(v)+(N-k)\log |s|^2\\
   & \le &  -\gamma(v)(x_0)+(k-p)\log|s|^2(x_0)+\gamma(v)+(N-k)\log |s|^2\\
   & \le & A(2C_1+1)+(k-p)\log|s|^2(x_0)+(N-k-A)\log |s|^2
  \end{eqnarray*}
  thanks to \eqref{gamma} and the claim follows by taking $N=k+A$.
  \end{proof}
  
  At this point, it is important to assume that $k\ge 2$ so that one can ensure that $\beta \vert s\vert^{4}(x_0)\ge 1$ thanks to the claim. From now on, we fix $k:=2$ (but any number $k\ge 2$ would do too). In particular, we have $ \frac 1{\beta |s|^2(x_0)} \le 1$  and $\beta \vert s\vert^2 \ge \vert s\vert^{-2}$ at $x_0$. One chooses 
 \[A:=  \max\left\{k(C_5+C_6)+C_7+1, k\cdot \sup_{x\in [0,1]} x(2B-\log x)^3\right\}.\] 
 Given \eqref{gamma1}, one gets at $x_0$
\[     - \gamma''(v) - \frac{k}{\beta \vert s\vert^2} \ge  \frac{2 A}{(2B - \log \vert s\vert^2)^3} -  k  \vert s\vert^2 >   \frac{ A}{(2B - \log \vert s\vert^2)^3}. \]
    Therefore we have at $x_0$:
\[    \text{tr}_{\theta} \omega + \frac{ A}{(2B - \log \vert s\vert^2)^3}  \vert \nabla v\vert^2_{\theta} \le  C_8,\]
with $C_8:=2C_3+k(1+C_5)+2(n+1)A$. In particular, we have 
   \[     \text{tr}_{\theta} \omega (x_0) \le C_8, \quad \mbox{and} \quad  \vert \nabla v\vert^2_{\theta} (x_0) \le C_8 (2B - \log \vert s\vert^2)^3.\]
    Since $\theta^n \lesssim |s|^{-2} \omega^n$, we infer from the above estimate that $\mathrm{tr}_\omega \theta (x_0) \le C_9 \vert s  \vert^{-2}$ for some constant $C_9 > 0$. This implies that at $x_0$, one has
   \[
    \beta = \vert \nabla v\vert^2_{\omega} \le  C_9 \vert s  \vert^{- 2 }  \vert \nabla v\vert^2_{\theta} \le C_8C_{9} \vert s  \vert^{- 2 } (2B - \log \vert s\vert^2)^3
\]
Set $C_{10}:=\sup_{x\in [0,1]} \big[\sqrt x(2B-\log x)^3\big]$. Then at the point $x_0$ where $\alpha$ achieves his maximum,  we have
    \begin{eqnarray*}
    \alpha (x_0) &=& \log \beta(x_0) + k \log \vert s (x_0) \vert^2  - \gamma (v(x_0)) \\
    &\le &(k-  \frac 32) \log \vert s(x_0)\vert^2 +\log(C_8C_9C_{10}) +A(C_1+1)\\
    & \le & C_{11}.
 \end{eqnarray*}
    In the end, we get 
    \begin{eqnarray*}
    \log \beta  &=& \alpha -k \log \vert s\vert^2+\gamma(v)\\
    &\le & C_{11}-(k+A) \log |s|^2+AC_1
     \end{eqnarray*}
   which proves the required inequality with $N=k+A$.

   \end{proof}

\subsection{Laplacian estimate in families}

  The following result  -in the context of holomorphic families- 
  is a combination of the main results of \cite{CKNS85,GL10}.

  \begin{thm}  
  \label{thm laplacian estimates}
  In Setup~\ref{setup}, we assume that $\pi$ is smooth outside the basepoint $0\in X_0$. We fix $V \subset \overline{{\mathcal X}}$ a small neighborhood of $\partial {\mathcal X}$ not containing the singular point $0$.
Fix $\lambda \in \R$, $\omega$ a K\"ahler form on ${\mathcal X}$, 
$h \in {\mathcal C}^{\infty}(\partial X)$ 
and $ f \in {\mathcal C}^{\infty}(\overline{V})$
such that $\inf_{V} f =\sigma >0$. 
For all $t \in \mathbb D$, we let $u_t$ denote  
a smooth plurisubharmonic function in $\overline{V_t}$ such that
\[(dd^c u_t)^n=e^{\lambda u_t} f_t \omega^n_t
\text{ in } V_t
\; \text{ and } \; u_{|\partial X_t}=h_t.\]
Assume that there exists a smooth plurisubharmonic function $v_t$ in $\overline{V_t}$ such that
\[dd^c v_t \ge \e \omega_t \text{ in } V_t
\; \; \text{ and } \; \; 
  u_t \ge v_t
  \text{ in } V_t,
  \; \text{  with } v_t=u_t=h_t
\;   \text{ on } \; \partial X_t.\]
Then
\[\|\Delta u_t\|_{L^{\infty}(\partial X_t)} \le C,\]
where $C$ depends on upper bounds for 
$\|u_t\|_{{\mathcal C}^1(\overline{V_t})}$, $\|v_t\|_{{\mathcal C}^2(\overline{V_t})}$,
$\|f_t\|_{{\mathcal C}^1(\overline{V_t})}$, $\|h\|_{{\mathcal C}^4(\partial X_t)}$,
  $\e^{-1},\sigma^{-1}$.
\end{thm}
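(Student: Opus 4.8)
The plan is to reduce the statement to the classical boundary second-order estimate for the complex Monge--Amp\`ere equation of Caffarelli--Kohn--Nirenberg--Spruck \cite{CKNS85} and its refinements \cite{GL10}, carried out locally near a single boundary point, and to check that the resulting constants are uniform in $t$. Since $\pi$ is a smooth submersion over a neighbourhood of $\partial\cX$ and $\partial\cX$ is compact, I would first fix a finite covering of $\partial\cX$ by coordinate charts $U_\alpha\subset\overline{\cX}$ in which $(X_t\cap U_\alpha)_{t\in\bD}$ is a smooth product foliation and the fibrewise defining function $\rho_t:=\rho|_{X_t}$ is smooth with non-vanishing gradient, all of this varying smoothly with $t$; after passing to such a chart it suffices to bound the full complex Hessian $(\partial_i\partial_{\bar j}u_t)$ at a fixed point $p\in\partial X_t$, since $(\partial_i\partial_{\bar j}u_t)>0$ and $\Delta u_t$ is, up to a constant, its trace. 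I split the entries into tangential--tangential, tangential--normal, and double-normal ones relative to $\partial X_t$ at $p$.

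For the tangential--tangential entries, differentiate the boundary relation $u_t-h_t\equiv 0$ on $\partial X_t$ twice along fields tangent to $\{\rho_t=0\}$; only first derivatives of $u_t-h_t$ (through the second fundamental form of $\partial X_t$) and second derivatives of $h_t$ survive, so these entries are bounded by $\|u_t\|_{\mathcal C^1}$, $\|h_t\|_{\mathcal C^2(\partial X_t)}$ and the uniform geometry of $\partial\cX$. The tangential--normal entries are the technical heart. Differentiating $\log\det(\partial_i\partial_{\bar j}u_t)=\lambda u_t+\log f_t$ along a tangential field $\tau$ gives a linear equation $L_t(\partial_\tau u_t)=\lambda\,\partial_\tau u_t+\partial_\tau\log f_t+[\,L_t,\partial_\tau\,]u_t$ for the linearised operator $L_t:=(u_t)^{i\bar j}\partial_i\partial_{\bar j}$, whose right-hand side is controlled by the allowed data together with a harmless multiple of $\tr_{dd^cu_t}\om_t$. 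One then runs a barrier argument on a half-ball $B\cap X_t$: the function $\partial_\tau(u_t-h_t)$ vanishes on $\partial X_t$, is bounded on the spherical part of $\partial(B\cap X_t)$ by $\|u_t\|_{\mathcal C^1}$, and is dominated by a supersolution built from $v_t$, $\rho_t$, $\rho_t^2$ and $|z-p|^2$, the decisive point being that $L_t v_t\ge\e\,\tr_{dd^cu_t}\om_t\gtrsim\e\sum_i(u_t)^{i\bar i}$ by $dd^cv_t\ge\e\om_t$, which absorbs all error terms once the barrier coefficients are fixed in a hierarchy depending only on $\|v_t\|_{\mathcal C^2}$, $\|u_t\|_{\mathcal C^1}$, $\|f_t\|_{\mathcal C^1}$, $\|h\|_{\mathcal C^4(\partial X_t)}$, $\e^{-1}$, $\sigma^{-1}$ and the geometry of $\partial\cX$; comparing normal derivatives at $p$ then bounds $\partial_N\partial_\tau u_t(p)$, hence all tangential--normal entries. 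Finally, for the remaining double-normal entry one uses that $\det(\partial_i\partial_{\bar j}u_t)=e^{\lambda u_t}f_t$ is pinched between two positive constants (here $\|u_t\|_{\mathcal C^1}$ bounds $u_t$ from above and $\inf_V f=\sigma>0$), together with a lower bound $\det(\partial_\alpha\partial_{\bar\beta}u_t)_{\alpha,\beta\ \mathrm{tang}}(p)\ge c_0>0$ on the tangential minor --- itself a consequence of the subsolution, exactly as in \cite[\S1]{CKNS85} --- so that the Schur-complement identity bounds the double-normal entry from above, while positivity of $dd^cu_t$ bounds it from below. Altogether $\|\Delta u_t\|_{L^\infty(\partial X_t)}\le C$.

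Uniformity in $t$ is then automatic: every constant produced above involves only the coefficients of $\rho_t$ and $\om_t$ in our fixed atlas, the lower bound $\sigma$ for $f$, the subsolution constant $\e$, and the norms $\|u_t\|_{\mathcal C^1(\overline{V_t})}$, $\|v_t\|_{\mathcal C^2(\overline{V_t})}$, $\|f_t\|_{\mathcal C^1(\overline{V_t})}$, $\|h\|_{\mathcal C^4(\partial X_t)}$; since the geometric data depend smoothly on $t$ and $\partial\cX$ is compact, taking a maximum over the finitely many charts yields a single $C$ with the asserted dependence. The main obstacle is the tangential--normal step: one must calibrate the barrier so that $L_t w_t\le 0$ while keeping $w_t\ge|\partial_\tau(u_t-h_t)|$ on $\partial(B\cap X_t)$, and in particular arrange the coefficient hierarchy to be independent of the a priori unknown size of $(u_t)^{i\bar j}$ --- which is precisely what the coercive inequality $L_tv_t\gtrsim\e\sum_i(u_t)^{i\bar i}$, i.e. the existence of the strictly plurisubharmonic subsolution $v_t$, delivers. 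Everything else is a routine transcription of the \cite{CKNS85,GL10} arguments in families.
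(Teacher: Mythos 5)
Your proposal is correct and follows essentially the same route as the paper: a family version of the Caffarelli--Kohn--Nirenberg--Spruck / Guan--Li boundary second-derivative estimate, with the standard split into tangential--tangential (differentiate the boundary identity), tangential--normal (barrier built from the strict subsolution $v_t$ and $\rho,\rho^2,|z-p|^2$), and double-normal (Schur complement plus a lower bound on the tangential minor) directions, and the same observation that all constants depend only on the listed quantities and the smoothly-varying geometry near $\partial\cX$. The only place you underplay the difficulty is the lower bound $c_0$ on the tangential minor, which is not an immediate consequence of the subsolution but needs a global boundary minimization and an extra barrier argument (the paper's Lemma~\ref{positivite} and Claim~\ref{eq:UpperBound}); you do, however, correctly attribute it to the CKNS scheme.
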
 

The proof is an adaptation of \cite[Section 4]{GL10} which deals with
the case when $f$ is globally smooth on $\overline{{\mathcal X}}$.
  We sketch the arguments for the convenience of the reader.

\begin{proof}
We pick a smooth strictly plurisubharmonic non-positive function $\rho$ on $\overline {\mathcal X}$ such that  $\mathcal X = \{\rho < 0\}$ and $\partial \mathcal X = \{\rho = 0\}$.

We  proceed in several steps. For any fixed point $p \in \partial \cX$, we will choose local complex coordinates $(z_1, \cdots,z_n, \cdots, z_{N})$ centered at $p$ (i.e. $z(p) = 0$) defined in a neighborhood $\mathcal V$  of $p$  in $\C^N$ such that $z_{N}=\pi-\pi(p)$ and $\mathcal V \cap \mathcal X  =  \{ z \in \mathcal V \, ; \, z_{n+1} = \cdots = z_{N-1} = 0\}$.
Moreover we can arrange so that if  $z_1 = x_1 + i y_1, \cdots, z_n = x_n + i y_n$, $\partial \slash \partial x_n$ is the inner normal to $\partial X_t$ at the point $p$ and $\omega_t (p) = \sqrt{-1}\sum_j d z_j \wedge d \bar z_j$ and $(u_t)_{j \bar k}$ is diagonal at $p$. We set $\theta_t:=dd^cu_t$.

\bigskip

\noindent
{\bf Step 1 :} {\it Estimates of the tangential second order partial derivatives.} 

\medskip
\noindent
We consider the real coordinates near the point $p$ defined  as follows 
$$
s_{2j - 1} = x_j, \, \, s_{2j} := y_j, \, \, 1 \le j \le n-1, \quad \text{and} \quad  s_{2n -1} := y_n, s_{2 n} := x_n.
$$
 We will omit the subscript $t$ and use the usual notations for the partial derivatives  $ \phi_{\alpha} =  \phi_{s_\alpha}  = \frac{\partial \phi}{\partial s_\alpha}$ and $ \phi_{\alpha  \beta} := \frac{\partial^2 \phi}{\partial s_\alpha \partial s_\beta}$,  $\phi_{j \bar k} := \frac{\partial^2 u}{\partial z_j \partial \bar z_k}$ etc. We claim that there exists a constant $M_0 >0$ depending on $\|u_t\|_{\mathcal C^1(\partial X_t)}$, $\Vert  v_t \Vert_{C^2 (\partial X_t)}$,    $\Vert h\Vert_{C^2(\partial \mathcal X)}$ and $\|\rho\|_{\mathcal C^2(\partial X_t)}$ such that
 
 \begin{equation} \label{eq:tangentialC2}
 \vert u_{\alpha  \beta}  (p) \vert \le M_0, \quad \forall \,  \alpha, \beta < 2 n.
 \end{equation}
 
 \smallskip
 
 Indeed, since $u =  v$ on $\partial X_t$, we can write 
 $u -  v = \phi \rho$ near $\partial X_t$, where $\phi = \phi_t$ is a smooth function in a neighborhood of $\partial X_t$.  Moreover, one can assume that 
 \begin{equation}
 \label{norm}
 \rho_{x_n}|_{\partial X_t} \equiv -1 \quad \mbox{ in a neighborhood of} \,\, p
 \end{equation} 
 e.g. by passing to a Moser normal form for the real-analytic boundary $\partial X_t = \{x_n=\|z'\|^2+\sum_{\ell >0, |\alpha|,|\beta|>2} a_{\ell, \alpha, \beta}y_n^\ell z'^\alpha\bar z'^{\beta}$\}). Since $\rho = 0$ on $\partial X_t$, we get
 \begin{equation} 
 \label{eq;phi}
 \phi(p) =   -   (u- v)_{x_n} (p).
  \end{equation}
Similarly, we have for any $\alpha, \beta \le 2 n$
 $$
 (u-v)_{\alpha  \beta}  (p) = \phi_\alpha(p) \rho_\beta (p) + \phi_\beta (p) \rho_\alpha (p) + \phi _{\alpha \beta}  (p).
 $$
 Moreover   for any $\alpha, \beta < 2 n$, we have
 $$
  (u - v)_{\alpha  \beta}  (p) = \phi (p)  \, \rho_{\alpha \beta} (p) =  -   (u- v)_{x_n} (p)  \rho_{\alpha \beta} (p).
 $$
 Therefore the estimates \eqref{eq:tangentialC2} follow the boundary $\mathcal C^1$ estimates for $u$.
 
\bigskip

\noindent
{\bf Step 2 : }{\it Estimates of tangential-normal second order partial derivatives. }

\medskip
\noindent
These estimates rely on  the Maximum Principle for the linearized operator (see \cite[Lemma 4.3]{GL10}). With the same notations as before, for $\delta > 0$ we define 
\[ X_t (\delta) := X_t \cap B (p,\delta),\]
where $B (p,\delta)\subset \C^N$ is the euclidean  ball centered at the point $p \in \partial X_t$ with radius $\delta$. We will choose $\delta > 0$ so small that $X_t (\delta)  \subset V_t$ for any $t \neq 0$.

We will use a barrier function. For  $\tau > 0$ so that $\tau \cdot \|\rho\|_{\mathcal C^2(V_t)} < \varepsilon \slash 4$, we define
\begin{equation}
\gamma_t := (u_t -  v_t) - \tau \rho  - N \rho^2,
\end{equation}
 where $N >0$ is large enough  so that 
 $$
 n N^{1 \slash n}  \, \varepsilon  >  4 (1 + n + \varepsilon)  \sup_{V_t  \times [-M,M]} \psi_t^{1 \slash n},
 $$ 
 where $\psi_t (z,u)  := e^{ \lambda u} f_t(z) $ and $\delta$ is so small that $\delta < \tau \slash N$. Here $M := \|u_t\|_{L^{\infty}(V_t)}$.
 Under these conditions the barrier function $\gamma= \gamma_t$ satisfies the following properties (see  \cite[Lemma 4.1]{GL10}) 
 \begin{equation}
 \label{ineq gamma}
 \begin{cases}
 \Delta_\theta \gamma  \le -  \frac \varepsilon  4 \cdot  \left(1 + \mathrm{tr}_\theta \omega \right) & \text{in} \, \, X_t(\delta),\\
  \gamma \ge 0 &  \text{in} \, \, \partial X_t(\delta).
 \end{cases}
 \end{equation}
Using these estimates we can easily derive the following form of the Maximum Principle  (see \cite[Lemma 4.3]{GL10}).
 
 \begin{lem} 
 \label{lem:MaxP} Let $w \in \mathcal C^2 (\overline{ X_t(\delta)}).$ Suppose that $w$ satisfies 
 \[ \Delta_\theta w \ge -C_1(1+\mathrm{tr}_\theta\omega)  \, \, \text{on} \, \, X_t(\delta),\]
 and 
\[  w \le C_0 r^2 \, \, \text{in} \, \, B (p,\delta) \cap \partial X_t, \, \, w (p) = 0,\]
 where $r^2:= d (\cdot, p)^2 = \vert \cdot - p\vert^2$.
 
 Then the normal derivative of $w$ with respect to the inner unit normal vector $\nu$ at $p$ satisfies the inequality 
 $$  D_{\nu} w (p) = w_{x_n} (p) \le C_2,  $$
  for a constant $C_2 > 0$ depending on $\varepsilon^{-1}$, $C_0, C_1$,    $\Vert  u_t \Vert_{C^1(\partial X_t)}$, $ \Vert  v_t \Vert_{C^1(\partial X_t)}$, $\sup_{ X_t(\delta)} w$ and the constants $N, t , \delta$.
 \end{lem}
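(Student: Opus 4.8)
The plan is to run a Hopf-lemma type argument with a barrier built from the function $\gamma=\gamma_t$ of \eqref{ineq gamma}, corrected by a quadratic term that takes care of the spherical part of the boundary $\partial X_t(\delta)$. Set $r^2=|\cdot-p|^2$; in the coordinates fixed above, $r^2$ restricts to $X_t$ as $\sum_{j=1}^n|z_j-p_j|^2$, so $dd^cr^2|_{X_t}\ge 0$ and, by comparing with $\omega$ on the (compact) chart, $dd^cr^2|_{X_t}\le\Lambda\,\omega_t$ for a geometric constant $\Lambda>0$ that is uniform in $t$ and $p$; hence $0\le\Delta_\theta r^2\le\Lambda\,\mathrm{tr}_\theta\omega$ on $X_t(\delta)$. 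I would then take the comparison function
\[h:=A\,\gamma+B_0\,r^2,\qquad B_0:=C_0+\delta^{-2}\,\sup_{X_t(\delta)}w ,\]
with $A>0$ a large constant to be fixed.

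The first step is to check that $h\ge w$ on $\partial X_t(\delta)=\bigl(\partial X_t\cap B(p,\delta)\bigr)\cup\bigl(X_t\cap\partial B(p,\delta)\bigr)$. On the flat part one uses $\gamma\ge 0$ and $B_0\ge C_0$ together with the hypothesis $w\le C_0r^2$ to get $h\ge C_0r^2\ge w$; on the spherical part one has $r^2=\delta^2$, so $h\ge B_0\delta^2\ge\sup_{X_t(\delta)}w\ge w$, again using $\gamma\ge 0$. The second step is the differential inequality: combining $\Delta_\theta\gamma\le-\tfrac\varepsilon4(1+\mathrm{tr}_\theta\omega)$, the hypothesis $\Delta_\theta w\ge-C_1(1+\mathrm{tr}_\theta\omega)$ and $\Delta_\theta r^2\le\Lambda\,\mathrm{tr}_\theta\omega$ gives
\[\Delta_\theta(w-h)\ \ge\ \Bigl(\tfrac{A\varepsilon}{4}-C_1\Bigr)\ +\ \Bigl(\tfrac{A\varepsilon}{4}-C_1-B_0\Lambda\Bigr)\,\mathrm{tr}_\theta\omega ,\]
which is $\ge 0$ once $A\ge 4\varepsilon^{-1}(C_1+B_0\Lambda)$; fix such an $A$. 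Then $w-h$ is $\theta$-subharmonic on $X_t(\delta)$, so by the maximum principle $w-h\le 0$ on all of $X_t(\delta)$, while $w(p)=0=\gamma(p)=r^2(p)=h(p)$ since $\rho(p)=0$ and $u_t=v_t$ on $\partial X_t$.

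The conclusion is then the usual boundary-point argument: $w\le h$ on $X_t(\delta)$ with equality at $p\in\partial X_t(\delta)$, so differentiating along the inner normal $\nu=\partial/\partial x_n$ (which genuinely points into $X_t$ because $\rho_{x_n}(p)=-1$ by \eqref{norm}) yields $D_\nu w(p)\le D_\nu h(p)$. One has $D_\nu r^2(p)=0$ and, using $\rho(p)=0$ and $\rho_{x_n}(p)=-1$, $D_\nu\gamma(p)=(u_t-v_t)_{x_n}(p)+\tau$, which is bounded by $\|u_t\|_{\mathcal C^1(\partial X_t)}+\|v_t\|_{\mathcal C^1(\partial X_t)}+\tau$. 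Therefore $D_\nu w(p)\le A\bigl(\|u_t\|_{\mathcal C^1(\partial X_t)}+\|v_t\|_{\mathcal C^1(\partial X_t)}+\tau\bigr)=:C_2$, and tracking the dependencies shows $C_2$ depends only on $\varepsilon^{-1}$, $C_0$, $C_1$, the $\mathcal C^1$-data of $u_t,v_t$ on $\partial X_t$, $\sup_{X_t(\delta)}w$, and $N,t,\delta$ (through $\tau$, $B_0$ and the geometric constant $\Lambda$).

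The one point requiring care — and the reason the statement allows $C_2$ to depend on $\sup_{X_t(\delta)}w$ and on $\delta$ — is the spherical part $X_t\cap\partial B(p,\delta)$ of $\partial X_t(\delta)$: the barrier $\gamma$ by itself need not be bounded below there, so it cannot be used alone, and the role of the extra term $B_0r^2$ (with $B_0$ chosen in terms of $\delta$ and $\sup_{X_t(\delta)}w$) is precisely to dominate $w$ on that portion of the boundary without disturbing either the estimate on the flat part or the value and normal derivative at $p$. Everything else is a routine verification, uniform in $t\neq0$ because all the input constants are.
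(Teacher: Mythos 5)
Your argument is correct and coincides with the paper's proof: the paper applies the maximum principle to $\phi := A\gamma + Br^2 - w$ with $B=\max\{C_0,\delta^{-2}\sup w\}$ and $A=4\varepsilon^{-1}(C_1 + B\,\mathrm{tr}_\omega\omega_{\mathbb C^N})$, which is exactly your $h - w$ up to the cosmetic choice $B_0 = C_0 + \delta^{-2}\sup w$ versus a maximum. The decomposition of $\partial X_t(\delta)$ into flat and spherical parts, the sign checks coming from \eqref{ineq gamma}, and the final normal-derivative computation $D_\nu\gamma(p) = (u_t-v_t)_{x_n}(p)+\tau$ using $\rho(p)=0$ and $\rho_{x_n}(p)=-1$ all match the paper.
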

 
 \begin{proof}[Proof of Lemma~\ref{lem:MaxP}]
 This lemma is an easy consequence of \eqref{ineq gamma} coupled with the Maximum Principle for the second order linear elliptic operator 
 $ \Delta_\theta$ in $X_t(\delta)$ applied to the test function $ \phi := A \gamma + B r^2 - w$, where $A \gg B \gg 1$ are both large enough so that $\Delta_\theta \phi \le 0$ in $X_t(\delta)$, $\phi \ge 0$ in $\partial X_t(\delta)$ and $\phi (p) = 0$. Indeed, it follows from \eqref{ineq gamma} that
 \[
 \begin{cases}
 \Delta_\theta \phi \le (C_1+B\mathrm{tr}_\omega \omega_{\C^N}-\frac{A\ep}4) (1+\mathrm{tr}_\theta \omega) & \text{in} \, \, X_t(\delta),\\
 \phi \ge (B-C_0)r^2&  \text{in} \, \, B (p,\delta) \cap \partial X_t,\\
 \phi \ge B\delta^2-\sup_{ X_t(\delta)} w&  \text{in} \, \, \partial B (p,\delta) \cap X_t.
 \end{cases}
 \]
 so that $B:= \max\{C_0, \delta^{-2}\sup_{ X_t(\delta)}w\}$ and $A:=4\ep^{-1}(C_1+B\mathrm{tr}_\omega \omega_{\C^N})$ are suitable. Then by the Maximum Principle we have $\phi \ge 0$ in $X_t(\delta)$ and then $D_{\nu} \phi (p) \ge 0$. Hence $D_{\nu} w (p) \le A D_{\nu} \gamma (p) = A D_\nu (u-v) (p) - \tau D_\nu \rho  (p) \le C_2$.
\end{proof}
 
 We next apply Lemma \ref{lem:MaxP} to deduce the following estimates 
 \begin{equation} \label{eq:tangent-normalEst}
 \vert u_{s_\alpha x_n} (p)\vert \le M_1,  \quad \forall \,  \alpha < 2n,
 \end{equation}
 where $M_1 > 0$ is a constant under control.
 
 In order to do that, one considers for any $\alpha <2n$ the vector field $\mathcal T=\mathcal T_\alpha$ defined near $p$  by the formula 
\[ \mathcal T := D_\alpha - \eta D_{ x_n}\, \, \text{where} \, \, \eta := \frac{ \rho_\alpha}{ \rho_{x_n}} \, \, \text{and} \, \, \alpha < 2 n.\]
Since $\mathcal T\rho=0$, it is a tangential operator (i.e. it cancels any function that vanishes on the boundary). 
  Following \cite[pp.~1205-1206]{GL10}) one applies Lemma  \ref{lem:MaxP} to the function
 $$
 w := ( u_{y_n} - h_{y_n})^2 \pm \mathcal T (u-h), \, \, 
 $$
 to deduce an upper bound on its normal derivative at $p$ i.e.
 $$
  w_{x_n} (p) \le C,
 $$
 where $C > 0$ depends on  $\varepsilon^{-1}$,  $\Vert  f_t\Vert_{\mathcal C^1 (\bar V_t)}$, $ \Vert w\Vert_{\mathcal C^0 (V_t)}$, $ \Vert v_t\Vert_{\mathcal C^1 (\bar V_t)}$, $\Vert \eta \Vert_{\mathcal C^1(X_t(\delta))}$. Observe here that 
 $ \Vert w\Vert_{\mathcal C^0 (V\cap \bar X_t)}$ can be estimated in terms of   $ \Vert u_t\Vert_{\mathcal C^1 (V_t)}$ and $ \Vert h\Vert_{\mathcal C^1 (V_t)}$.
 From the tangential estimates \eqref{eq:tangentialC2}, we deduce a uniform upper bound on $\pm D_\nu \mathcal T (u-h)$ since $ (D_{y_n} u - D_{y_n}h) (p) = 0$. The estimates \eqref{eq:tangent-normalEst} follow with a constant $M_1$ depending on $C$ as well as $\Vert h \Vert_{C^2(\partial V_t)}$.

 Summarizing the above estimates we have proved so far that 
\begin{equation} \label{eq:finalEst}
\max_{\alpha, \beta < 2 n}\left\{\vert u_{\alpha \beta } (p)\vert,  \vert u_{\alpha x_n }  (p) \vert \right\}\le M_2, 
\end{equation}
where $M_2 = \max\{M_0,M_1\}$ depends on  $ \Vert u\Vert_{\cC^1 (V_t)}$, $  \| v \|_{C^1 (V_t)}$,  $\Vert v \Vert_{\cC^2 (\partial X_t)}$,    $\Vert h \Vert_{\cC^2(\partial X_t)}$, $\varepsilon^{-1}$. 

\bigskip

\noindent
{\bf Step 3 :}  {\it Estimates of the normal-normal second partial derivative}

\medskip
\noindent
We want to estimate $\vert u_{x_n x_n} (p) \vert$. Since 
$$
0 \le  u_{n \bar n}  (p) = \frac 14 (u_{x_n x_n} (p) +   u_{y_n y_n} (p)),
$$
it is enough to estimate $u_{n \bar n} (p)$ from above since $ u_{y_n y_n} (p)$ is uniformly bounded by  \eqref{eq:finalEst}. This is the tricky part of the proof (see \cite[Proposition 4.4]{GL10}). \\

Expanding $\mathrm{det} (u_{j \bar k} (p))$ we obtain the formula 
$$
\mathrm{det}  \left((u_{j \bar k} (p))\right) = a u_{n \bar n} (p) + b,
$$
where 
$$
 a := \mathrm{det}  \left(u_{j \bar k}  (p)\right)_{j,k \le n - 1},
$$
and $b$ is bounded by a constant depending only on the constant $M_2$ from \eqref{eq:finalEst}. So we only have to estimate $a$ from below, given our control on $\|f_t\|_{\cC^0(V_t)}$, which follows from the result below. 

\begin{lem} 
\label{positivite}
There exists a positive $c_0 > 0$ such that for any $\xi \in T_p^{1,0} \partial X_t \simeq \C^{n-1}$, we have $\|\xi\|^2_\theta \ge c_0 \|\xi\|^2_\omega$. Equivalently, 
$$
\sum_{\alpha, \beta \le n - 1} \xi_{\alpha} {\bar \xi}_\beta u_{\alpha \bar \beta} (p) \ge c_0 \vert \xi\vert^2, \xi \in \C^{n-1}.
$$
\end{lem}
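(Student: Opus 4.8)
The plan is to prove the inequality pointwise at $p$, combining the strict plurisubharmonicity of the subsolution with the equation for $u$, following \cite{CKNS85,GL10}; throughout I keep the notation of the proof above, in particular I drop the subscript $t$ and write $\theta=dd^cu$.

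First I would fix a vector $\xi$ in the complex tangent space $T^{1,0}_p\partial X_t\simeq\C^{n-1}$, that is, with $\sum_\alpha\xi_\alpha\rho_\alpha(p)=0$, and exploit the relation $u-v=\phi\rho$ near $\partial X_t$ introduced in Step~1. Differentiating it twice, evaluating at $p$ (where $\rho(p)=0$) and contracting with $\xi_\alpha\bar\xi_\beta$, the cross terms $\phi_\alpha\rho_{\bar\beta}+\rho_\alpha\phi_{\bar\beta}$ drop out, leaving
\begin{equation}\label{eq:tangidentity}
\sum_{\alpha,\beta}\xi_\alpha\bar\xi_\beta\,u_{\alpha\bar\beta}(p)=\sum_{\alpha,\beta}\xi_\alpha\bar\xi_\beta\,v_{\alpha\bar\beta}(p)+\phi(p)\sum_{\alpha,\beta}\xi_\alpha\bar\xi_\beta\,\rho_{\alpha\bar\beta}(p).
\end{equation}
Since $dd^cv\ge\e\omega$ and $\omega(p)$ is Euclidean, the first term on the right is $\ge\e|\xi|^2$; since $u-v\ge0$ and $\rho<0$ in $X_t$, with $u-v=\rho=0$ on $\partial X_t$, one has $\phi(p)\le0$; and $\sum_{\alpha,\beta}\xi_\alpha\bar\xi_\beta\,\rho_{\alpha\bar\beta}(p)>0$ because $\partial X_t$ is strongly pseudoconvex. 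Hence \eqref{eq:tangidentity} yields $\|\xi\|_\theta^2\ge\e|\xi|^2-|\phi(p)|\,\|\rho\|_{\mathcal C^2}|\xi|^2$, and everything reduces to controlling $|\phi(p)|=|\partial_\nu(u-v)(p)|$.

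The crude bound $|\phi(p)|\le\|u\|_{\mathcal C^1(\bar V_t)}+\|v\|_{\mathcal C^1(\bar V_t)}$ (controlled by hypothesis) is not small enough to make the right-hand side positive. To improve it I would extend $\xi$ to a smooth vector field $\mathcal T$ on $X_t(\delta)$ that is complex tangential to $\partial X_t$ throughout a boundary neighborhood of $p$ and of unit length there, and run a maximum-principle argument for $\Delta_\theta$, in the form of Lemma~\ref{lem:MaxP}, on $u_{\mathcal T\bar{\mathcal T}}+A\gamma$, where $\gamma$ is the barrier of Step~2 satisfying \eqref{ineq gamma} and $\gamma(p)=0$: differentiating $\log\det(u_{j\bar k})=\lambda u+\log f$ twice in the directions $\mathcal T,\bar{\mathcal T}$ and discarding the non-positive concavity term of $\log\det$ produces $\Delta_\theta(u_{\mathcal T\bar{\mathcal T}})\ge-C(1+\mathrm{tr}_\theta\omega)$ with $C$ under control, while on the flat boundary piece $B(p,\delta)\cap\partial X_t$ one uses \eqref{eq:tangidentity} with $\mathcal T$ in place of $\xi$ together with $u_{\mathcal T\bar{\mathcal T}}\ge0$. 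Tracking constants as in \cite{CKNS85,GL10}, this would give $\|\xi\|_\theta^2\ge c_0|\xi|^2$ with $c_0>0$ depending only on $\e$, $\|u\|_{\mathcal C^1(\bar V_t)}$, $\|v\|_{\mathcal C^2(\bar V_t)}$, $\|f\|_{\mathcal C^1(\bar V_t)}$, $\|h\|_{\mathcal C^4(\partial X_t)}$ and $\sigma^{-1}$; uniformity of $c_0$ in $t$ then follows once these quantities are known to be uniformly bounded, which is precisely what Proposition~\ref{thm:localsmoothing}, Lemma~\ref{lem:gradientbord} and Proposition~\ref{prop gradient} provide, as in the proof of Theorem~\ref{thm:uniformfamily}.

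The hard part, and the step I expect to be the real obstacle, is this last one: the correction term $\phi(p)\sum_{\alpha,\beta}\xi_\alpha\bar\xi_\beta\rho_{\alpha\bar\beta}(p)$ in \eqref{eq:tangidentity} cannot be absorbed by any pointwise comparison at $p$, so one is forced into a genuinely global argument on $X_t(\delta)$, and it is essential there that $v$ be \emph{strictly} plurisubharmonic, $dd^cv-\e\omega\ge0$, rather than merely plurisubharmonic, since that strict gap is exactly the slack that lets the barrier $\gamma$ dominate in the maximum principle.
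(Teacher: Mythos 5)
Your first paragraph correctly reproduces the tangential identity (it is the one the paper writes at the minimizing point $q$ via \eqref{eq;phi}), and you are right that the correction term $\phi(p)\sum\xi_\alpha\bar\xi_\beta\rho_{\alpha\bar\beta}(p)$ is nonpositive (since $\phi(p)\le 0$ and $\partial X_t$ is strongly pseudoconvex), so that a crude control of $|\phi(p)|$ cannot by itself salvage the positivity. However, the resolution you then propose does not match the paper's argument and, as stated, does not close the gap: running the barrier maximum principle of Lemma~\ref{lem:MaxP} on $u_{\mathcal T\bar{\mathcal T}}+A\gamma$ produces bounds on the \emph{normal derivative} of that quantity at a boundary point, not a lower bound on its boundary value $u_{\mathcal T\bar{\mathcal T}}(p)$, which is what you need. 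You also never use the equation $\det(u_{j\bar k})=e^{\lambda u}f$ in any essential way, and the equation is indispensable here.

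The paper's actual mechanism is an inversion-by-determinant trick, and it is the step you are missing. One locates a boundary point $q$ at which the minimum $m_0$ of the tangential eigenvalues is attained; by construction the auxiliary function $\Phi$ (built from the boundary data $h$, the normal derivative $(u-h)_{x_n}$ and $dd^c\rho$, exactly your tangential identity with $h$ in place of $v$) is nonnegative on $\partial X_t\cap B(p,\delta)$ and vanishes at $q$. Applying Lemma~\ref{lem:MaxP} to $w=(u_{y_n}-h_{y_n})^2-\Phi$ then yields an \emph{upper} bound on the double-normal derivative $u_{x_n x_n}(q)$ (the sign works in your favor because $\Phi$ contains $-u_{x_n x_n}\rho_{1\bar 1}$ after differentiating). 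Combined with the tangential and mixed estimates of Steps~1--2, this bounds \emph{all} entries of $(u_{j\bar k}(q))$ from above; since the equation gives $\det(u_{j\bar k}(q))\ge e^{\min\lambda u}\,\sigma>0$, the smallest eigenvalue, in particular $u_{1\bar 1}(q)=m_0$, must be bounded below. In short, the lower bound you seek is not obtained by absorbing the bad term or by a minimum principle for $u_{\mathcal T\bar{\mathcal T}}$; it is obtained indirectly, by bounding everything else from above at the worst point and then inverting through the Monge--Amp\`ere equation. The strict plurisubharmonicity of $v$ enters only through the barrier $\gamma$ used in Lemma~\ref{lem:MaxP}, not as a pointwise slack against $\phi(p)\rho_{\alpha\bar\beta}(p)$.
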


\begin{proof}[Proof of Lemma~\ref{positivite}]
Let $T^\C \partial X_t \subset T^\C X_t|_{\partial X_t}$ be the complex tangent bundle to $\partial X_t$ and set
\[T^{1,0} \partial X_t =  T^{1,0} X_t \cap T_\C \partial X_t = \{\xi \in T^{1,0} X_t ; \partial \rho (\xi) = 0\}.\]
In local coordinates near $p$, we have 
\[T^{1,0} \partial X_t = \{\xi = \sum \xi_j \frac{\partial }{\partial z_j} \in T^{1,0} X_t ; \sum_j \xi_j \frac{\partial \rho }{\partial z_j} = 0\}.\]
Let us define
\[m_0 := \min \left\{ \|\xi\|^2_\theta \, ; \, \xi \in T^{1,0} \partial X_t , \| \xi \|^2_\omega = 1\right\},\]
where the minimum is taken over $\overline{X_t(\delta)}\cap \partial X_t$. Up to shrinking $\delta$, we can assume that the minimum is attained at an interior point $q\in \partial X_t\cap B(p,\delta)$ and choose a new set of coordinates $(z_i)$ centered at $q$ as before (i.e. such that $\frac{\partial}{\partial x_n}$ is the inner normal vector at $q$ such that $\rho_{x_n}|_{\partial X_t}\equiv-1$ near $q$) such that 
\begin{equation}
\label{m0}
m_0 =\theta_q(\frac{\partial}{\partial z_1}, \frac{\partial}{\partial \bar z_1})= u_{1 \bar 1}(q).
\end{equation}
 By \eqref{eq;phi}, we have
\[u_{1 \bar 1} (q) = v_{1 \bar 1} (q) - (u-v)_{x_n} (q)   \rho_{1 \bar 1} (q)\]

\begin{claim}
\label{eq:UpperBound} There exists a constant $C>0$ such that 
\[u_{x_n x_n}(q) \le C\]
and the constant $C$ depends only on $\varepsilon^{-1}$, $C_0, C_1$,    $\Vert  u_t \Vert_{\cC^1(\partial X_t)}$, $ \Vert  v_t \Vert_{\cC^1(\partial X_t)}$, $\Vert h \Vert_{\cC^4(\bar X_t(\delta))}$, the constants $N, \tau , \delta$ and a uniform bound on $\|\rho\|_{\cC^4(V_t)}$. 
\end{claim}

\begin{proof}[Proof of Claim~\ref{eq:UpperBound}]
To prove this we will apply Lemma \ref{lem:MaxP}  to a well chosen test function. For fixed $\delta > 0$ small enough, define the  following vector field 
\[\xi := - \rho_{z_n} \frac{\partial}{\partial z_1} + \rho_{z_1} \frac{\partial}{\partial z_n} \in T^{1,0}  X_t\] 
and observe that 
\[\kappa := \vert \xi \vert_\om := \left\vert - \rho_{z_n} \frac{\partial}{\partial z_1} + \rho_{z_1} \frac{\partial}{\partial z_n}\right\vert_\om > 0, \, \text{in} \, \, X_t \cap B(p,\delta)\]
since $\rho_{z_n}(q)\neq 0$. Then $\zeta := \kappa^{-1} \xi $ is a unit vector field in $ T^{1,0} X_t$ defined on $ X_t \cap B(p,\delta)$ such that $\zeta (q) \in T^{1,0}_q \partial X_t$ and the minimum $ m_0 = u_{1 \bar 1} (q)$ is achieved at $\zeta (q)$.\\

Define the following smooth function
\begin{eqnarray*}
\Phi &:=& \langle \zeta, \zeta\rangle_{dd^c h}-(u-h)_{x_n} \|\zeta\|^2_{dd^c \rho}- u_{1 \bar 1} (q)\\
&=& \sum_{j,k} h_{j \bar k} \zeta_j \bar \zeta_k - (u-h)_{x_n}  \sum_{j,k}  \rho_{j \bar k} \zeta_j \bar \zeta_k - u_{1 \bar 1} (q),
\end{eqnarray*}
on $X_t \cap B(p,\delta)$. Similarly to earlier, the identity $(u - h)|_{\partial X_t} \equiv 0$ ensures that we can write $u-h = \psi \rho$. By \eqref{norm}, we know that 
\begin{equation}
\label{psi id}
\psi|_{\partial X_t}=-(u-h)_{x_n}.
\end{equation}
Moreover, we have the following identity of forms on $\partial X_t$:
\[\partial \bar\partial (u-h)=\psi \partial \bar\partial \rho+2\Re(\partial \rho \wedge \bar \partial \psi)\]
hence  for any $\nu \in T^{1,0} \partial X_t = T^{1,0}X_t\cap \mathrm{ker}(\partial \rho)$,
\begin{equation*}
 \langle \nu, \nu\rangle_{dd^c h}=\|\nu\|^2_\theta-\psi \|\nu\|^2_{dd^c \rho}=\|\nu\|^2_\theta+(u-h)_{x_n} \|\nu\|^2_{dd^c \rho}
 \end{equation*}
 thanks to \eqref{psi id}. 
  Hence since $\zeta (q) \in T^{1,0}_q \partial X_t$, by definition of $m_0 := u_{1 \bar 1} (q)$, we have
$$
\Phi = \|\zeta\|^2_\theta  - u_{1 \bar 1} (q) \ge 0, \, \, \text{in} \, \,  B(p,\delta) \cap \partial X_t 
$$
and $\Phi(q) = 0$.

A simple computation shows that the function
$w := (u_{y_n} - h_{y_n})^2 -  \Phi $ satisfies the conditions of Lemma \ref{lem:MaxP} which implies that $w_{x_n} \le C$ and then  $\Phi_{x_n} (q) \ge  -C'$, 
where $C'> 0$ is a constant depending on $\varepsilon^{-1}$,   $\Vert  u_t \Vert_{\cC^1(\partial X_t)}$, $ \Vert  v_t \Vert_{\cC^1(\partial X_t)}$, $\Vert h \Vert_{\cC^4(\bar X_t(\delta))}$ and the constants $N, t , \delta$.

\smallskip

This implies that at $q$ we have $u_{x_n x_n}  \rho_{1 \bar 1}  \le C'',$ where $C''$ depends on $C'$ as well as $\Vert h \Vert_{\cC^3(\bar X_t(\delta))}$ and $\|\rho\|_{\cC^3((\bar X_t(\delta))}$. The claim now follows from the uniform positivity of $dd^c\rho$. 
\end{proof}

In view of \eqref{eq:finalEst} and \eqref{eq:UpperBound}, it follows that we have an a priori upper bound on all  the eigenvalues of the matrix 
$(u_{j \bar k}  (q))$.  

Since $\mathrm{det} (u_{j \bar k}  (q)) = e^{\lambda u (q)} f_t (q) \ge e^{\min_{\partial X_t} \lambda u} \sigma$, the eigenvalues of $\theta_q$ must have a uniform lower bound. In particular, $u_{1\bar 1}(q)$ is bounded away from $0$ uniformly. Given \eqref{m0}, this proves Lemma~\ref{positivite}.
\end{proof}
The theorem is now proved. 
  \end{proof}

    \bibliographystyle{smfalpha}
\bibliography{biblio}

\end{document}